\documentclass[11pt]{amsart}
\usepackage{amssymb,amsmath,amsthm,amsfonts,amsopn,url,hyperref,enumerate,mathtools,microtype,MnSymbol}
\usepackage[dvipsnames]{xcolor}
\usepackage[mathscr]{euscript}
\usepackage[normalem]{ulem}
\usepackage[all]{xy}
\usepackage{amscd}
\usepackage{tikz,tikz-cd}
\usepackage{comment}
\usepackage{mabliautoref}
\usepackage{xspace}
\usepackage{fullpage}
\usepackage{stmaryrd}



\newcommand{\DuBois}[1]{{\underline \Omega {}^0_{#1}}}
\newcommand{\field}[1]{\mathbb{#1}}
\newcommand{\N}{\field{N}}

\newcommand{\Q}{\field{Q}}
\newcommand{\bQ}{\field{Q}}
\newcommand{\bZ}{\field{Z}}

\newcommand{\bL}{\mathbb{L}}
\newcommand{\fm}{\mathfrak{m}}
\newcommand{\fn}{\mathfrak{n}}
\newcommand{\ideal}[1]{\mathfrak{#1}}
\newcommand{\m}{\ideal{m}}

\newcommand{\func}[1]{\mathrm{#1} \,}

\newcommand{\Spec}{\func{Spec}}
\newcommand{\Proj}{\func{Proj}}
\newcommand{\depth}{\func{depth}}

\newcommand{\cf}{{\itshape{cf.} }}
\newcommand{\mJ}{{\mathcal{J}}}

\newcommand{\coker}{\func{coker}}
\newcommand{\im}{\func{im}}

\newcommand{\arrow}[1]{\stackrel{#1}{\rightarrow}}

\newcommand{\ra}{\rightarrow}

\DeclareMathOperator{\Image}{Image}
\DeclareMathOperator{\myR}{{\mathbb R}}
\DeclareMathOperator{\myL}{{\mathbb L}}
\DeclareMathOperator{\ann}{ann}

\DeclareMathOperator{\Hom}{Hom}
\DeclareMathOperator{\Ann}{Ann}

\newcommand{\be}{\begin{enumerate}}
\newcommand{\ee}{\end{enumerate}}

\newcommand{\li}
 {\leftfootline}

\newcommand{\into}{\hookrightarrow}

\newcommand{\cO}{\mathcal{O}}

\newcommand{\sJ}{\mathscr{J}}
\renewcommand{\phi}{\varphi}


\newcommand{\Cech}{{\v{C}ech} }

\newcommand{\fg}{finitely generated}

\newcommand{\KosClos}{{KH closure}\xspace}
\newcommand{\KosClosed}{{KH closed}\xspace}
\newcommand{\KosTestIdeal}{{KH test ideal}\xspace}
\DeclareMathOperator{\colim}{colim}
\DeclareMathOperator{\Kos}{KH}
\DeclareMathOperator{\KH}{KH}
\DeclareMathOperator{\Hir}{Hir}
\newcommand{\mydot}{{{\,\begin{picture}(1,1)(-1,-2)\circle*{2}\end{picture}\,}}}
\newcommand{\traceIdealComplex}{degree zero trace\xspace}
\def\trcomp#1{{\mathrm{tr}}^0_{#1}}


\newcommand{\cl}{{\mathrm{cl}}}
\let\int\relax
\DeclareMathOperator{\int}{i}

\DeclareMathOperator{\tr}{tr}

\DeclareMathOperator{\img}{im}
\DeclareMathOperator{\coh}{coh}

\newcommand{\crd}{\color{red}} 
\newcommand{\cmg}{\color{magenta}} 

\def\KarlTodo#1{\textcolor{blue}%
{\footnotesize\newline{\color{blue}\fbox{\parbox{\textwidth-15pt}{\textbf{todo-Karl: } #1}}}\newline}}

\newcommand{\pitoR}{\pi:Y \to \Spec(R)}
\newcommand{\canon}{\Gamma(\omega_Y)}
\newcommand{\alt}{\mathrm{calt}}
\newcommand{\palt}{\mathrm{palt}}

\author{Neil Epstein}
\address{Department of Mathematical Sciences \\ George Mason University \\ Fairfax, VA  22030}
\email{nepstei2@gmu.edu}

\author{Peter M. McDonald}
\address{Department of Mathematics\\ Simon Fraser University \\ Burnaby, BC V5A 1S6}
\email{pma94@sfu.ca}

\author{Rebecca R.G.}
\address{Department of Mathematical Sciences \\ George Mason University \\ Fairfax, VA  22030}
\email{rrebhuhn@gmu.edu}

\author{Karl Schwede}
\address{Department of Mathematics \\ University of Utah \\ Salt Lake City, UT 84112}
\email{schwede@math.utah.edu}

\title[Closure operations via resolutions]{Closure operations induced via resolutions of singularities in characteristic zero}
\subjclass[2020]{Primary: 13A99.   
Secondary: 14F18, 13A35, 13D09, 13B22, 13C14, 13D45, 14B05, 14B15, 14E15
}
\keywords{
closure operation, resolution of singularities, alteration, test ideal, Cohen-Macaulay complex, multiplier ideal, interior operation, trace, tight closure, integral closure
}

\numberwithin{equation}{subsection}
\date{\today}
\begin{document}
\begin{abstract}
    Using the fact that the structure sheaf of a resolution of singularities, or regular alteration, pushes forward to a Cohen-Macaulay complex in equal characteristic zero with a differential graded algebra structure, we introduce a tight-closure-like operation on ideals in equal characteristic zero using the Koszul complex, which we call KH (Koszul-Hironaka).  We prove it satisfies various strong colon capturing properties, a substantial case of the Brian\c{c}on-Skoda theorem, and it behaves well under finite extensions.  It detects rational singularities and is tighter than tight closure in equal characteristic zero.  Furthermore, its formation commutes with localization and it can be computed effectively.  On the other hand, the product of the KH closures of ideals is not always contained in the KH of the product, as one might expect.  
    
    We also explore a related closure operation (canonical alteration closure), induced by canonical modules of regular alterations, which detects KLT-type singularities in equal characteristic zero and which is closely related to tight closure in characteristic $p > 0$.  For parameter ideals we show both these closure operations coincide and reduce modulo $p \gg 0$ to tight closure.  Finally, we explore an intermediate operation (Hironaka pre-closure) which which satisfies numerous desired properties, but for which we have not been able to prove idempotence.
\end{abstract}

\maketitle
\setcounter{tocdepth}{1} 
\tableofcontents

\section{Introduction}

It is well known that the singularities associated to Frobenius and tight closure theory are closely related to the singularities of the minimal model program defined by resolution of singularities, for instance see \cite{Fedder.FPurityAndRational,MehtaRamanathanFrobeniusSplittingAndCohomologyVanishing,HHmain,SmithFRatImpliesRat,HaraRatImpliesFRat,MehtaSrinivasRatImpliesFRat,HaraWatanabeFRegFPure,TakagiInterpretationOfMultiplierIdeals,HaraYoshida,MustataTakagiWatanabeFThresholdsAndBernsteinSato,BlickleMustataSmithDiscretenessAndRationalityOfFThresholds}.  One large omission was that while the characteristic $p > 0$ picture is closely tied to the theory of Hochster and Huneke's \emph{tight closure}, there doesn't seem to be a corresponding closure operation in {equal} characteristic zero that is induced by resolutions of singularities and their associated vanishing theorems.  

Of course, we have closure operations obtained via reduction mod $p$, see \cite{HochsterHunekeTightClosureInEqualCharactersticZero} ({but beware of \cite{BrennerKatzman.ArithmeticOfTightClosure})}, and closure operations coming from ultraproducts and ultra Frobenius \cite{Schoutens.NonStandardTightClosureForAffineCAlgebras,aschenbrennerschoutens}.  At the same time, we have Brenner's characteristic-free parasolid closure \cite{BrennerHowToRescueSolidClosure} (\cf solid closure \cite{HochsterSolidClosure}) which also satisfies numerous desired properties (although again, some are proven in {equal} characteristic zero by reduction to characteristic $p > 0$).  Several other interesting closure operations which apply in {equal} characteristic zero can be found in \cite{Brenner.ContinuousSolutionsAlgebraicForcing,EpsteinHochster.ContinuousAxesNatural,BrennerSteinbuch.TightClosureAndContinuousClosure}.
Another way to produce tight closure-like operations in any characteristic is to use extension and contraction from (weakly functorially assigned) big Cohen-Macaulay algebras.  In characteristic $p > 0$, such closures essentially agree with tight closure if the big Cohen-Macaulay algebra is large enough \cite{HochsterSolidClosure} and satisfy many of the same properties in any characteristic (see for instance \cite{dietz,RodriguezVillalobosSchwede.BrianconSkodaProperty} as well as \cite{heitmannepf} in view of \cite{BhattAbsoluteIntegralClosure}).  However, the only way we know to construct big Cohen-Macaulay algebras in characteristic $0$ uses characteristic $p > 0$ (or at least reduction to mixed characteristic).

In {equal} characteristic zero, the Matlis dual version of Grauert-Riemenschneider vanishing \cite{GRVanishing}, which has been generalized from varieties to $\bQ$-schemes under mild hypotheses by Murayama \cite{Murayama.VanishingForQSchemes}, guarantees that if $\pi : Y \to \Spec R$ is a resolution of singularities (or a regular alteration), then 
\[
    \myR\Gamma(Y, \cO_Y)
\]
is a Cohen-Macaulay complex \cite{Roberts.HomologicalInvariantsOfModulesOverCommutative} (\cf \cite{IMSW:2021}) under mild hypotheses on $R$.  In fact, it even has algebra-like properties as it can be viewed as a cosimplicial algebra or differential graded algebra.  This leads to the following question.

\begin{question*}
    How can you extend and contract an ideal from a differential graded $R$-algebra that is Cohen-Macaulay as an $R$-complex?
\end{question*}

While we explored several different approaches, we found the strongest results and best behavior by using the Koszul complex on an ideal.

Suppose now that $R$ is a reduced excellent ring containing $\bQ$ with a dualizing complex, $J = (f_1, \dots, f_n) \subseteq R$ is an ideal, and $\pi : Y \to \Spec R$ is a resolution or regular alteration.  Then we define the \emph{Koszul-Hironaka (KH) closure of $J$} to be 
\[
    \begin{array}{rl}
        J^{\Kos} := & \ker\Big(R \to H_0 \big(K_{\mydot}({\bf f}; R) \otimes^{\myL} \myR \Gamma(Y, \cO_Y)\big) \Big)
        \\
        = & \big\{ x \in R \; \big|\; x \text{ annihilates $K_{\mydot}({\bf f}; R) \otimes^{\myL} \myR \Gamma(Y, \cO_Y)$ viewed as an element of $D(R)$}\big\}
    \end{array}
\]
where $K_{\mydot}({\bf f}; R)$ is a Koszul complex on a set of generators of $J$.  It turns out that $J^{\Kos}$ is independent of the choice of regular alteration or resolution and independent of the choice of generators of $J$.  It is also idempotent, meaning that $(J^{\Kos})^{\Kos} = J^{\Kos}$.  For all this and more, see \autoref{prop.kosIndependence} (whose proof crucially uses the differential graded algebra structure of $\myR \Gamma(Y, \cO_Y)$). The equivalence of the two characterizations is found in \autoref{rem.DerivedCatAnnihilatorThanksToBriggs}.

\KosClos also satisfies some of the usual accoutrements of tight closure such as persistence (\autoref{prop.PersistenceOfKos}), $J^{\Kos} = (JS)^{\Kos} \cap R$ for finite extensions $R \subseteq S$ (\autoref{prop.FiniteExtensionComputationKHClosure}), and more.  Notably, it satisfies strong forms of colon capturing:

\begin{theoremA*}[Colon capturing: \autoref{thm.cc}]
    Suppose $R$ is an excellent reduced {equidimensional} local ring of {equal} characteristic zero with a dualizing complex.
    Suppose $x_1, \dots, x_n \in R$ is a system of parameters.  Then for $t > a$ and $1 \leq k \leq n$
    \[
        (x_1^t, x_2, \dots, x_k)^{\Kos} : x_1^a \subseteq (x_1^{t-a}, x_2, \dots, x_k)^{\KH}.
    \]
    Furthermore,  
    \[
        (x_1, \dots, x_{k-1})^{\KH} : x_k \subseteq (x_1, \dots, x_{k-1})^{\KH}.
    \]
\end{theoremA*}

We also obtain a special version of the Brian\c{c}on-Skoda theorem, although some natural generalizations are not true (see below), and we have since proven that substantial other generalizations are true (see \cite{MaMcDonaldRGSchwede.BSForPseudorRational}).

\begin{theoremB*}[\autoref{thm.SkodaForPowerKos}]
    Suppose $R$ is excellent domain of {equal} characteristic zero with a dualizing complex.  Then {for any ideal $J$ of $R$ which can be generated by $n$ elements,}
    \[ 
        \overline{J^n} \subseteq J^{\Kos}.
    \]
\end{theoremB*}

Perhaps more interesting, however, are the ways that the \KosClos operation \emph{differs from other related closures} (like tight closure and plus closure in characteristic $p > 0$).

\begin{enumerate}
    \item \KosClos is strictly {\textbf{\emph{smaller}}} (that is, ``tighter'') than reduction-modulo-$p$ version of tight closure in {equal} characteristic zero for finite type algebras over a field (or even the reduction-modulo-$p$ version of plus closure) \cite{HochsterHunekeTightClosureInEqualCharactersticZero}.  See \autoref{prop.KHContainedInPlusModP} and \autoref{sec.ComputationsInM2}.  This partially strengthens the above colon-capturing and Brian\c{c}on-Skoda theorems as the ideal on the right can be smaller.
    \item \KosClos{} {\textbf{\emph{measures rational singularities}}}.  Indeed, the following are equivalent thanks to \autoref{cor.RatSingsVsKoszulClosedParameterIdeal}.
    \begin{enumerate}
        \item $(R,\m)$ has rational singularities.
        \item $J = J^{\Kos}$ for all ideals.
        \item $J = J^{\Kos}$ for a single ideal generated by a full system of parameters.
    \end{enumerate}
    For tight closure in characteristic $p > 0$, having all ideals be tightly closed implies KLT singularities in the $\bQ$-Gorenstein setting \cite{HaraWatanabeFRegFPure}, which is strictly stronger than pseudo-rational singularities. 
    \item \KosClos{} {\textbf{\emph{can be computed}}}, {by} a computer, if one knows a resolution of singularities or if one knows the module $\Gamma(Y, \omega_Y)$ for $Y \to \Spec R$ a resolution of singularities (or regular alteration).  We include a simple Macaulay2 package and it is with this that we verify that \KosClos is strictly tighter than tight closure even for some diagonal hypersurfaces.  See \autoref{sec.ComputationsInM2}.  Tight closure and plus closure tend to be difficult to compute, although see \cite{McDermottTightClosurePlusClosureCubicalCones,Singh.TightClosureDiagonalHypersurfaces,Brenner.TightClosureAndProjectiveBundles,Brenner.SlopesOfVBOnProjectiveCurvesAndApplicationsToTC,KatzmanParameterTestIdeals}.  We believe the fact that \KosClos can be computed may make the Brian\c{c}on-Skoda and colon capturing results above more effective.
    \item The formation of \KosClos{} {\textbf{\emph{commutes with completion and localization}}}, and in fact commutes with any flat map $R \to S$ whose fibers have rational singularities, see \autoref{prop.FlatMapsWithRationalSingularitiesFibers}.  While plus closure commutes with localization, tight closure does not \cite{BrennerMonsky.TCLocalization, BorevitzNaderSandstromShaprioSimpsonZomback.LocalizationOfTightClosureS4Line}.  Also see \cite{Lyu.PermanencePropertiesOfSplintersViaUltra}.
\end{enumerate}

{
We would like to take a moment to emphasize the importance of having a computable closure operation in {equal} characteristic zero. When tight closure was initially introduced, the difficulty of computing it was immediately clear. Huneke notes in \cite[Example~1.2]{Huneke.TCParamAndGeometry} (originally written in 1997) that even studying the tight closure of monomial ideals in $k[x,y,z]/(x^3+y^3+z^3)$, where $k$ is a field of prime characteristic $p\neq3$, could prove quite difficult. At the time, McDermott had shown that $xyz\in (x^2,y^2,z^2)^*$ when $p<200$ \cite{McDermottTightClosurePlusClosureCubicalCones} before Singh was able to show the result for all $p\neq3$ (and a more general result for diagonal hypersurfaces) \cite{Singh.TightClosureDiagonalHypersurfaces}. It's worth pointing out that these arguments only apply to specific ideals in the ring and rely on the fact that the ring is a diagonal hypersurface to reduce the problem to a question about a polynomial ring in two variables.

Breakthrough work of Brenner \cite{Brenner.TightClosureAndProjectiveBundles, Brenner.SlopesOfVBOnProjectiveCurvesAndApplicationsToTC} expands upon this, studying the tight closure in normal standard graded $k$-algebras (frequently of dimension 2) by investigating corresponding vector bundles and their subbundles on the corresponding projective variety. {Some of his work was in service of computing tight closure, but it mainly applied to 2-dimensional cases.  Thus,} it is again worth noting the effort required just to study tight closure in two-dimensional rings.

The fact that KH-closure can (oftentimes) be easily computed, combined with the fact that it is smaller than characteristic zero tight closure, allows us to obtain tighter Brian\c{c}on-Skoda bounds among other things. To return to the example of the cubical hypersurface, working now over $\bQ$, consider $J:=(x^2,y^2,z^2)\subseteq Q[x,y,z]/(x^3+y^3+z^3)$. \cite{Singh.TightClosureDiagonalHypersurfaces} tells us the characteristic zero tight closure contains $xyz$, whereas in \autoref{exam.KHClosureExampleComputations} we show that $J^{\KH}=J$, giving a tighter bound on $\overline{J^3}$.
}

It is not completely surprising that a closure operation based on the object $\myR\Gamma(Y, \cO_Y)$ should detect rational singularities.  Indeed by \cite{KovacsRat,BhattDerivedDirectSummandCharP,Murayama.VanishingForQSchemes,Lyu.PropertiesBirationalDerivedSplinters}, $R$ has rational singularities if and only if $R \to \myR\Gamma(Y, \cO_Y)$ splits for one or equivalently any regular alteration $\pi : Y \to \Spec R$ (or even for any proper surjective map from a nonsingular scheme).

It is also worth noting that all the properties we show about \KosClos are consequences of resolution of singularities and the associated vanishing theorems, or are characteristic independent, and we do not utilize reduction to characteristic $p > 0$.

However, the \KosClos operation does not behave as well as other common closure operations when it comes to products or powers of ideals.  
This is perhaps not surprising as we are not aware of a clean way to compare Koszul complexes of $I$ and $IJ$ or $I^n$.  Specifically, in \autoref{exam.KosClosureNoGeneralizedBS} we show that for an $n$-generated ideal $I$, it can happen that $\overline{I^{n+k-1}} \nsubseteq (I^k)^{\KH}$ for $k \geq 2$.  In other words, the generalized Brian\c{c}on-Skoda theorem fails for \KosClos.  Furthermore, it can happen that $I^{\Kos} I^{\Kos} \nsubseteq (I^2)^{\Kos}$ and that $x I^{\Kos} \neq (xI)^{\Kos}$ for $x$ a {nonzerodivisor}.  That is, \KosClos is not a semi-prime operation or a star operation, see \autoref{subsec.NotASemiprimeOrStarOperation}.  All of these examples were verified using the Macaulay2 package we created to compute \KosClos.

\subsection{Other resolution-based {equal} characteristic zero closures}

There is another (larger) closure operation in {equal} characteristic zero we study that is probably closer to tight closure, in that it detects KLT-type singularities (that is, that there exists a $\Delta \geq 0$ such that $(R, \Delta)$ is KLT).  We call this closure \emph{canonical alteration closure} and define it as 
\[L_M^\alt\colon=\bigcap_{\pitoR} L_M^{\cl_{\canon}}\]
where $\pi$ varies over regular alterations $\pi : Y \to \Spec R$ and we define $L_M^{\cl_{\canon}}$
as the module closure associated to the $R$-module $\Gamma(Y,\omega_Y)$.  Note the modules $\Gamma(Y,\omega_Y)$ 
    are the multiplier submodules / Grauert-Riemenschneider submodules of $\omega_S$ where $R \subseteq S$ is a finite extension of $R$.  We show that if one uses the parameter test modules $\tau(\omega_S)$ in characteristic $p > 0$, instead of $\Gamma(Y, \omega_Y)$ then this closure operation coincides with tight closure, see \autoref{thm.TightClosureEqualsModuleClosureParameterTestModules}.  This gives some evidence already that this closure operation is closely related to tight closure.

    Back in {equal} characteristic zero, we observe that for any ideal $I \subseteq R$, that 
    \[ 
        I^{\KH} \subseteq I^{\alt} := I_R^{\alt} 
    \]
    {%
    and that the reduction to characteristic $p > 0$ tight closure (and plus closure) sits in between these two operations for finite type algebras over a field, see \autoref{prop.KHContainedInPlusModP} and \autoref{prop.TCIsContainedInCaltChar0}.
    Furthermore, we are able to show that all these closures are equal for parameter ideals, compare with \cite[Corollary 4.2]{Huneke.TCParamAndGeometry},  \cite[Proposition 6.2]{Hara.GeometricInterpretationOfTightClosureAndTestIdeals}, and \cite[Theorem 5.24]{Yamaguchi.CharacterizationOfMultiplierIdealsViaUltra}.
    \begin{theoremC*}[{\autoref{cor.ClosuresCoincideInChar0ForParameterIdeals}, \autoref{thm.AllOurClosuresAgreeWithTightClosureForParameter}}]
        Suppose $R$ is an excellent domain {of equal characteristic zero}, with a dualizing complex, and $J = (f_1, \dots, f_t) \subseteq R$ is an ideal such that $f_1, \dots, f_t$ is part of a system of parameters in every localization $R_Q$ where $J \subseteq Q \in \Spec R$.  Then 
        \[
            J^{\KH} = J^{\alt} = J^{\cl_{\Gamma(\omega_Y)}} := (J \Gamma(Y, \omega_Y)) : \Gamma(Y, \omega_Y)
        \]
        where $\pi : Y \to \Spec R$ is any regular alteration.  Furthermore, if $R$ is of finite type over a field of characteristic zero, this ideal agrees with the tight closure $(J_p)^*$ after reduction to any characteristic $p \gg 0$.
    \end{theoremC*}
    As an immediate consequence, canonical alteration closure also satisfies strong colon capturing properties for parameter ideals.}
    By reduction modulo $p$ and a comparison to tight closure (\autoref{prop.TCIsContainedInCaltChar0}), we also show that $\overline{I^{n+k-1}} \subseteq (I^k)^{\alt}$ for $I$ an $n$-generated ideal and any integer $k \geq 1$, see \autoref{cor.BSForCanonicalAlteration}.

    We conclude by identifying the test ideals associated to both \KosClos and canonical alteration closure.

    \begin{theoremD*}[\autoref{thm:alttestidealisJ}, \autoref{prop.TestIdealForKosClosure}]
        The test ideal associated to the canonical alteration closure is the de Fernex-Hacon multiplier ideal, \cite{DeFernexHaconSingsOnNormal}.
        \[
            \tau_{\alt}(R) = \sJ(R).
        \]

        If $R$ is Cohen-Macaulay, the test ideal associated to the Koszul-Hironaka (KH) closure is
        \[
            \tau_{\KH}(R) = \Ann_R\big(\omega_R / \Gamma(Y, \omega_Y)\big)
        \] where  {$Y \to \Spec R$} is a resolution of singularities. Recall that  $\Gamma(Y, \omega_Y) = \sJ(\omega_R)$ is the multiplier module (aka, the Grauert-Riemenschneider sheaf).  In particular, $\tau_{\KH}(R)$ agrees with the multiplier ideal if $R$ is Gorenstein.
    \end{theoremD*}
    \noindent
    It easily follows that if $R$ is Cohen-Macaulay, then $\tau_{\Kos}(R)$ reduces modulo $p$ to the parameter test ideal, see \autoref{cor.KHTestIdealReducesToParameterTestIdeal}.

    Finally, we also consider one other operation which we call \emph{Hironaka preclosure}, which sits in between \KosClos and canonical alteration closure, see \autoref{subsec.HironakaPreclosure}.  In particular, it agrees with both for parameter ideals and hence satisfies strong versions of colon capturing.  Hironaka preclosure may in fact be idempotent and hence a closure operation, but we have not been able to prove this.  It does seem to behave better with respect to ideal powers and products however, see \autoref{prop.HirClosureIdealProducts}.  Finally, we note that Hironaka preclosure provides the full version of the Brian\c{c}on-Skoda  theorem thanks to \cite{MaMcDonaldRGSchwede.BSForPseudorRational}.



\subsection*{Acknowledgements}  The authors thank Holger Brenner, Ben Briggs, Hanlin Cai, Daniel Erman, Nobuo Hara, Srikanth Iyengar, Haydee Lindo, Linquan Ma, Kyle Maddox, Daniel McCormick, Shunsuke Takagi, and Mark Walker for valuable conversations.  We also thank Rankeya Datta, Anne Fayolle, Srikanth Iyengar, Kyle Maddox, and Sandra Rodr\'iguez Villalobos for useful comments on a previous draft.

This material is partly based upon work supported by the National Science Foundation under Grant No. DMS-1928930 and by the Alfred P. Sloan Foundation under grant G-2021-16778, while some of the authors were visiting the Simons Laufer Mathematical Sciences Institute (formerly MSRI) in Berkeley, California, during the Spring 2024 semester.    McDonald was partially supported by NSF RTG grant DMS-1840190.
Schwede was partially supported by NSF Grant DMS-2101800 and by NSF FRG Grant DMS-1952522.

\section{Background}

We begin with a quick review of the formalities of closure operations.
\subsection{Closure operations}
\label{subsec.ClosureOperations}
Let $R$ be a ring. A \emph{closure operation} (resp. a \emph{preclosure operation}) $\cl$ on $R$ is an operation on any pair of $R$-modules $L\subseteq M$ that returns an $R$-module $L^\cl_M\subseteq M$ that satisfies the following properties (resp. the first two of the following properties):
\begin{enumerate}
    \item[(1)] \emph{Extension}: $L\subseteq L^\cl_M$
    \item[(2)] \emph{Order-Preservation}: $L\subseteq L'\subseteq M$ implies $L^\cl_M\subseteq (L')^\cl_M$
    \item[(3)] \emph{Idempotence}: $(L^\cl_M)^\cl_M=L^\cl_M$
\end{enumerate}

Sometimes closure operations only apply to certain modules.  For instance, only submodules of $R$ (that is, ideals), in which case we say it is a \emph{closure operation on ideals}.  For ease of notation, when $M = R$ and $L = I$ is an ideal, we write 
\[
    I^{\cl} := I^{\cl}_M
\]
as the ambient module is clear.

There are several other properties of closure operations that are desirable:
\begin{enumerate}
    \item[(4)] \emph{Functoriality}: If $f\colon M\to N$ is a homomorphism, then $f(L^\cl_M)\subseteq f(L)^\cl_N$
    \item[(5a)] \emph{Semi-Residuality}: If $L^\cl_M=L$, then $0^\cl_{M/L}=0$
    \item[(5b)] \emph{Residuality}: $L^\cl_M=q^{-1}\left(0^\cl_{M/L}\right)$ where $q\colon M\to M/L$ is the quotient map
    \item[(6)] \emph{Faithfulness}: If $R$ is local, the maximal ideal is closed in $R$.    
    \item[(7)]  \emph{Persistence}:  If $R \to S$ is a map of rings for which the closure is defined, then the image of $S \otimes_R L^{\cl}_M$ in $S \otimes_R M$ is contained in $\Image(S \otimes L \to S \otimes M)^{\cl}_{S \otimes M}$.  For ideals $I \subseteq R$, this simply says that $I^{\cl}S \subseteq (IS)^{\cl}$.
\end{enumerate}
Perhaps the most important properties that a closure operation can satisfy, though, are those related to colon-capturing. In the following, let $(R,\fm)$ be a local ring and let $x_1,\dots,x_d$ be a system of parameters for $R$
\begin{enumerate}
    \item[(8a)] \emph{Colon-capturing}: $(x_1,\dots,x_k):x_{k+1}\subseteq(x_1,\dots,x_k)^\cl$.
    \item[(8b)] \emph{Strong colon-capturing, version A}: $(x_1^t,\dots,x_k):x_1^a\subseteq(x_1^{t-a},\dots,x_k)^\cl$.  We say it is \emph{improved} if $(x_1^t,\dots,x_k)^{\cl}:x_1^a\subseteq(x_1^{t-a},\dots,x_k)^\cl$.
    \item[(8c)] \emph{Strong colon-capturing, version B}: $(x_1,\dots,x_k)^\cl:x_{k+1}\subseteq(x_1,\dots,x_k)^\cl$.
    \item[(8d)] \emph{Generalized colon-capturing}: Suppose $R$ is a complete domain and $M$ is an $R$-module with $f\colon M\to R/(x_1,\dots,x_k)$ a surjective map such that $f(v)=x_{k+1}$. Then
    \[(Rv)^\cl_M\cap\ker(f)\subseteq((x_1,\dots,x_k)v)^\cl_M.\]    
\end{enumerate}
{%
Notice that strong colon capturing version $B$ can also be written as 
\[
    (x_1,\dots,x_k)^\cl:x_{k+1} = (x_1,\dots,x_k)^\cl
\]
since the containment $(x_1,\dots,x_k)^\cl \subseteq (x_1,\dots,x_k)^\cl:x_{k+1}$ always holds.
}

In \cite{dietz}, Dietz showed that a closure operation on $R$ satisfying (1-4), (5a), (6) and (8d) is equivalent to the existence of a big Cohen-Macaulay module over $R$. This was done by studying the properties of module closures.

\begin{defn}[Module closures]
\label{def.ModuleClosure}
    Let $B$ be an $R$-module. Then we define $\cl_B$ to be the closure operation
    \begin{align*}
        L^{\cl_B}_M:&=\big\{m\in M~\big|~m\otimes b\in\img(L\otimes_RB\to M\otimes_RB) ~\forall ~b\in B\big\}\\
        &=\bigcap_{b\in B}\big\{\ker (M\to M/L\otimes_R B)\big\}
    \end{align*}
    where the map on the second line is the composition of the map $M\to\ M\otimes_RB$ sending $m\mapsto m\otimes b$ with the natural map $M\otimes_RB\to M/L\otimes_RB$.
%
\end{defn}
{%
Furthermore, for any $R$-module $B$, if $J \subseteq R$ is an ideal, then 
\begin{equation}
\label{eq.ModuleClosureForIdeals}
    J^{\cl_B} = JB :_R B.
\end{equation}
Indeed, $x \in \bigcap_{b \in B} \ker(R \xrightarrow{1 \mapsto b} R/J \otimes B \cong B/JB)$ if and only if $bx \in JB$ for all $b \in B$.  But that is exactly the same as $x \in JB :_R B$.
}

In the case where $B$ is a big Cohen-Macaulay module, $\cl_B$ satisfies (1-4), (5b), (6) and (8(a-d)), see \cite{dietz}.

An important invariant of a closure operation is the associated test ideal.

\begin{defn}
    \label{def:testideal}
    Let $R$ be a ring and $\cl$ be a closure operation on a class of $R$-modules. Then the test ideal of $R$ is defined as
    \[\tau_\cl(R):=\bigcap_{L\subseteq M}(L:L^\cl_M)\]
    where the intersection ranges over all $R$-module pairs $L\subseteq M$ to which the closure operation applies. We can also define the finitistic test ideal as 
    \[\tau^{\mathrm{fg}}_\cl(R):=\bigcap_{L\subseteq M}(L:L^\cl_M)\]
    where the intersection ranges over all  $R$-module pairs $L\subseteq M$ such that $M/L$ is finitely generated.
\end{defn}
Test ideals have alternate characterizations in certain settings.  First we recall the notion of a trace ideal as well as a certain generalization to complexes.
\begin{defn}
    Suppose $R$ is a ring and $M$ is an $R$-module.  Then  the \emph{trace ideal of $M$ in $R$} is $\tr_M(R) := \sum_{\phi} \phi(M)$ where $\phi$ runs over elements of $\Hom_R(M, R)$.
\end{defn}

See \autoref{defn.TraceOfCopmlex} below for a variant of trace for a complex.

\begin{thm}[\cite{perezrg}~Theorem~1.1]
    \label{thm:testistrace}
    Let $(R,\fm,k)$ be a local ring and $E:=E_R(k)$ the injective hull of the residue field. 
    \begin{enumerate}
        \item Let $\cl$ be a residual closure operation. Then
        \[\tau_\cl(R)=\ann0^\cl_E.\]
        \item Let $\cl=\cl_B$ be a module closure. If $R$ is complete or $B$ is finitely-presented, then
        \[\tau_\cl(R)=\sum_{f\in\Hom_R(B,R)} f(B)=\tr_B(R).\]
    \end{enumerate}
\end{thm}

\subsection{Regular alterations and singularities}

In order to define another one of our closure operations, we will need some definitions from algebraic geometry.  A much more complete reference to most of what we discuss is \cite{KollarKovacsSingularitiesBook}.


\begin{defn}
Let $X$ be an reduced Noetherian scheme. For us, a \emph{regular alteration} of $X$ is a proper, surjective, generically finite map $\pi\colon Y\to X$ from a scheme $Y$ such that $Y$ is regular and such that every irreducible component of $Y$ dominates a irreducible component of $X$.  A \emph{resolution of singularities} is a birational regular alteration (in particular, there is a bijection between irreducible components of $X$ and $Y$ in this case).  If $\Delta$ is a $\bQ$-divisor on $X$, we say a resolution is a \emph{log resolution of $(X, \Delta)$} if the union of the exceptional set of $\pi$, with $\pi_*^{-1} \Delta$, (the strict transform of $\Delta$) is a simple normal crossings divisor\footnote{This means that this support is a union of smooth prime divisors that locally analytically look like coordinate hyperplanes}.
\end{defn}

(Log) Resolutions of singularities, and hence regular alterations, exist for varieties thanks to \cite{HironakaResolution}.  This was generalized to excellent schemes of {equal} characteristic zero in \cite{Tempkin.DesingularizationQuasiExcellentChar0}.  Regular alterations exist for positive characteristic varieties (and some excellent schemes even in mixed characteristic) thanks to \cite{deJongAlterations, deJongAlterations2}.

\begin{notation}
    We will frequently consider $\pi : Y \to \Spec R$ a resolution of singularities or a (regular) alteration.  In that case we write $\Gamma(\cO_Y)$ (or $\Gamma(\omega_Y)$) instead of $\Gamma(Y, \cO_Y)$ (respectively $\Gamma(Y, \omega_Y)$) as no confusion seems likely.
\end{notation}

\begin{definition}
    Suppose $X$ is an excellent normal scheme in {equal} characteristic zero with a dualizing complex.  We say that $X$ has \emph{rational singularities} if for one, or equivalently any, resolution of singularities $\pi : Y \to X$ we have that $\cO_X \to \myR \pi_* \cO_Y$ is an isomorphism in $D(X)$.  Equivalently, if $X = \Spec R$, we say that $R$ has \emph{rational singularities} if $X$ does.  This is equivalent to requiring that $R \to \myR \Gamma(\cO_Y)$ is an isomorphism in $D(R)$.
\end{definition}

Because of the above, given a proper morphism of schemes $\pi\colon Y\to X$, we are interested in the derived pushforward of the structure sheaf $\myR\pi_*\cO_Y$. Because $\pi$ is proper, this is an object of $D^b(\coh Y)$ and can be computed by taking an injective resolution of $\cO_Y$ and applying $\pi_*$. As an object, this has been long known to be key to understanding rational singularities in {equal} characteristic zero.  For varieties, Kov\'acs showed in \cite{KovacsRat} that if $Y$ has rational singularities and $\cO_X\to \myR\pi_*\cO_Y$ splits then $X$ has rational singularities, and work of Kov\'acs \cite{KovacsRat} and Bhatt \cite{BhattDerivedDirectSummandCharP} showed that $X$ has rational singularities if and only if $\cO_X\to \myR\pi_*\cO_Y$ splits for all $\pi\colon Y\to X$ proper surjective (i.e. $X$ is a derived splinter).  These results generalize to excellent schemes with dualizing complexes by \cite{Murayama.VanishingForQSchemes}.

The other class of singularities we will be interested in is \emph{KLT singularities}, an {equal} characteristic zero analog of strongly $F$-regular singularities from positive characteristic. 

\begin{definition}
    Suppose $X$ is a normal excellent integral scheme in {equal} characteristic zero with a dualizing complex.  We say that $X$ has \emph{Kawamata log terminal type} (or \emph{KLT-type}) singularities if there exists a $\bQ$-divisor $\Delta \geq 0$ with $K_X + \Delta$ $\bQ$-Cartier, such that for $\pi : Y \to X$ a log resolution of $(X, \Delta)$, we have that $\pi_* \cO_Y(\lceil K_Y - \pi^* (K_X + \Delta) \rceil) = \cO_X$.  Equivalently, if $X = \Spec R$, this means that $\myR \Gamma(\cO_Y(\lceil K_Y - \pi^* (K_X + \Delta) \rceil) \to R$ is an isomorphism where the higher direct images vanish by relative Kawamata-Viehweg vanishing.

    More generally, the \emph{(de Fernex-Hacon) multiplier ideal sheaf of $X$} is $\sJ(X) := \sum_{\Delta} \pi_* \cO_Y(\lceil K_Y - \pi^* (K_X + \Delta) \rceil) \subseteq \cO_X$ where $\Delta$ is as above. In fact, that sum has a maximal element \cite{DeFernexHaconSingsOnNormal}.  Likewise if $X = \Spec R$, we define the \emph{(de Fernex-Hacon) multiplier ideal} $\sJ(R)$ to be $\Gamma(X,\sJ(X)) = \sum_{\Delta} \cO_Y(\lceil K_Y - \pi^* (K_X + \Delta) \rceil)$.
\end{definition}

Similar to how strongly $F$-regular singularities are measured using the test ideal in positive characteristic, we immediately see that $R$ has KLT singularities if and only if the \emph{multiplier ideal} equals $R$.  We will need the following characterization of the multiplier ideal given by the second author.  We note that KLT singularities are rational thanks to \cite{ElkikRationalityOfCanonicalSings,KovacsRat,Murayama.VanishingForQSchemes}.

It is worth remarking that the higher direct images vanish by the relative Kawamata-Viehweg vanishing theorem \cite{KawamataVanishing,ViehwegVanishingTheorems} as generalized by Murayama \cite{Murayama.VanishingForQSchemes}.  That is, for $i > 0$, 
\[
    0 = \myR^i \pi_* \cO_Y(\lceil K_Y - \pi^* (K_X + \Delta) \rceil)
\]
A special case of this is $\myR^i \pi_* \omega_Y = 0$ for $i > 0$ -- the Grauert-Riemenschneider vanishing theorem \cite{GRVanishing} (again, generalized to the non-variety case in \cite{Murayama.VanishingForQSchemes}). A recent result of the second author allows us to understand the multiplier ideal as a sum of trace ideals.

\begin{theorem}[\cite{Mcd-multklt}]
    \label{thm.Mcd-multklt}
Let $R$ be a normal, excellent domain containing $\Q$ with a dualizing complex. Then the \emph{multiplier ideal} of $R$ is the ideal:
\[\sJ(R):=\sum_{\pitoR} \Image(\Hom(\canon,R) \otimes_R \canon \to R),\]
where $\pitoR$ ranges over all regular alterations and the map $\Hom(\canon,R) \otimes_R \canon \to R$ is the evaluation map. 
\end{theorem}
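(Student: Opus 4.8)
The plan is to translate the right-hand side into a statement about finite extensions of $R$ and then match it, term by term, with de Fernex-Hacon's construction of $\sJ(R)$. First I would record the factorization already observed above: a regular alteration $\pi\colon Y\to\Spec R$ factors as $Y\xrightarrow{\pi'}\Spec S\to\Spec R$, where $S$ is the normalization of $R$ in the function field $K(Y)$ (module-finite over $R$ since $R$ is excellent); then $\pi'$ is a resolution of singularities of $\Spec S$, Grauert-Riemenschneider vanishing gives $\myR\pi'_*\omega_Y=\pi'_*\omega_Y$, and $\Gamma(Y,\omega_Y)=\pi'_*\omega_Y$ is the Grauert-Riemenschneider / multiplier submodule $\sJ(\omega_S)\subseteq\omega_S$, independent of the chosen $\pi'$. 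Since every module-finite normal extension domain $S$ of $R$ arises this way (resolve $\Spec S$), the right-hand side becomes $\sum_S\tr_{\sJ(\omega_S)}(R)$, where $\tr_{\sJ(\omega_S)}(R)=\Image\big(\Hom_R(\sJ(\omega_S),R)\otimes_R\sJ(\omega_S)\to R\big)$ is the $R$-module trace ideal of $\sJ(\omega_S)$ and $S$ runs over such extensions. One preliminary remark, valid since $R\supseteq\Q$: if $d=[K(S):K(R)]$ then $\tfrac1d\mathrm{Tr}_{S/R}$ is a well-defined $R$-linear retraction of $R\hookrightarrow S$ (the characteristic polynomial of an element of $S$ has coefficients in the normal ring $R$, and $\tfrac1d\in\Q\subseteq R$); I would use this splitting, and its analogue for canonical modules, repeatedly.

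For the inclusion $\sum_S\tr_{\sJ(\omega_S)}(R)\subseteq\sJ(R)$ I would fix $S$ with associated finite map $g\colon\Spec S\to X:=\Spec R$ and produce an effective $\bQ$-divisor $\Delta=\Delta_S\geq 0$ with $K_X+\Delta$ $\bQ$-Cartier and $\tr_{\sJ(\omega_S)}(R)\subseteq\sJ(X,\Delta)$; then de Fernex-Hacon's identity $\sJ(R)=\sup_\Delta\sJ(X,\Delta)$ finishes it. Here $\Delta_S$ is essentially the branch-divisor boundary of $g$: using $\omega_S=g^{!}\omega_X$ together with the trace and the ramification formula, and choosing a common log resolution dominating both $\Spec S$ and a log resolution of an appropriate boundary on $X$, one rewrites $\tr_{\sJ(\omega_S)}(R)$ (via Grauert-Riemenschneider and the projection formula) as a subideal of the multiplier ideal of $(X,\Delta_S)$; enlarging $S$ if needed arranges that $K_X+\Delta_S$ is $\bQ$-Cartier. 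This direction is mostly bookkeeping with ramification divisors.

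For the reverse inclusion I would use the other half of de Fernex-Hacon, $\sJ(R)=\sum_\Delta\sJ(X,\Delta)$ over effective boundaries with $K_X+\Delta$ $\bQ$-Cartier, and show each $\sJ(X,\Delta)\subseteq\tr_{\sJ(\omega_S)}(R)$ for a suitable $S$. Given $\Delta$ of index $r$, I would take $S$ to be the normalization of the index-$r$ cyclic cover of $X$ built from $r(K_X+\Delta)$ (after adding a general effective divisor to $\Delta$ so the relevant reflexive sheaf is trivialized over the affine $X$): on this cover the $\bQ$-Cartier index is killed and the round-up $\lceil K_Y-\pi^*(K_X+\Delta)\rceil$ in the definition of $\sJ(X,\Delta)$ becomes an honest relative canonical divisor upstairs. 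A resolution $Y\to\Spec S$, Grauert-Riemenschneider, the projection formula, and the trace $\omega_S\to\omega_R$ then identify the $R$-trace of $\sJ(\omega_S)$ with an ideal containing $\sJ(X,\Delta)$; summing over $\Delta$ gives $\sJ(R)=\sum_\Delta\sJ(X,\Delta)\subseteq\sum_S\tr_{\sJ(\omega_S)}(R)$, which is exactly the desired inclusion.

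The hard part is the finite-cover dictionary underlying the middle two paragraphs: under $g\colon\Spec S\to\Spec R$ one must match the $R$-module trace ideal $\tr_{\sJ(\omega_S)}(R)$ with a genuine multiplier ideal of a $\bQ$-Cartier pair, and check that letting $S$ run over all finite covers realizes exactly the ideals obtained by letting $\Delta$ run over all $\bQ$-Cartier boundaries. The two delicate points are (i) that $R$ is only assumed normal, not $\bQ$-Gorenstein, so $\Hom_R(\sJ(\omega_S),R)$ is genuinely more than ``rank one'', and one must verify that tracing into $R$ (rather than into $\omega_R$) recovers the full multiplier ideal and not a proper subideal --- this is precisely where passing through $\Hom_R(-,R)$ and the characteristic-zero splitting of ramified covers enters --- and (ii) choosing the cyclic cover of the correct index $r$ so that the round-up in $\sJ(X,\Delta)$ disappears after pulling back.
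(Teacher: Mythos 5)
This statement is not proved in the paper: it is cited verbatim to the external reference \cite{Mcd-multklt}, so there is no in-paper proof to compare against. I can only assess your sketch on its own terms.

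Your overall plan is reasonable and matches what one would expect given the rest of the paper. The reduction from regular alterations to module-finite normal extensions $S$ via Stein factorization, with $\Gamma(Y,\omega_Y)=\sJ(\omega_S)$, is precisely the reduction this paper uses elsewhere (e.g.\ in the proof of \autoref{thm.JOmegaCanonicalAltDual}, where the key input is \cite[Theorem~8.1]{BlickleSchwedeTuckerTestAlterations} with $\Delta_X=-K_X$). Organizing the two inclusions around de Fernex--Hacon's description $\sJ(R)=\sum_\Delta\sJ(X,\Delta)$ and around cyclic covers to kill the $\bQ$-Cartier index is the standard strategy, and your observation that passing through $\Hom_R(-,R)$ rather than $\Hom_R(-,\omega_R)$ is what accounts for $R$ not being $\bQ$-Gorenstein is exactly the right thing to flag.

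There are, however, two concrete gaps. First, in the cyclic-cover direction your trivialization step goes the wrong way: if $\cO_X(r(K_X+\Delta))$ is nontrivial on affine $X$, adding an effective $E\sim-(K_X+\Delta)$ to $\Delta$ does make $r(K_X+\Delta+E)$ principal, but $\Delta+E\geq\Delta$ gives $\sJ(X,\Delta+E)\subseteq\sJ(X,\Delta)$, so you have replaced the ideal you need to control by a possibly strictly smaller one and have not recovered $\sJ(X,\Delta)\subseteq\tr_{\sJ(\omega_S)}(R)$. Since $\Pic$ of an affine normal domain need not be trivial, you cannot assume the needed line bundle is principal; the usual fix is to prove the containment as an identity of sheaves and check it after localizing at each prime (trace ideals of finitely generated modules commute with localization, and $\sJ(X,\Delta)$ is a sheaf), and your write-up should say this explicitly rather than invoking a global trivialization. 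Second, for the inclusion $\tr_{\sJ(\omega_S)}(R)\subseteq\sJ(R)$ you assert that ``enlarging $S$ if needed arranges that $K_X+\Delta_S$ is $\bQ$-Cartier,'' but this is not justified: the natural ramification boundary attached to $g\colon\Spec S\to X$ has no reason to make $K_X+\Delta_S$ $\bQ$-Cartier, and it is not clear how enlarging $S$ fixes this while keeping $\tr_{\sJ(\omega_S)}(R)$ under control (the trace ideal grows with $S$, so you cannot simply replace $S$ by a larger cover without comment). You need either to produce, for the given $S$, some $\bQ$-Cartier boundary $\Delta$ (not necessarily the branch divisor) dominating the trace ideal, or to argue as in \cite{BlickleSchwedeTuckerTestAlterations} via compatible pairs $(\Delta_Y,\Delta_X)$ on the alteration and the base, together with relative Kawamata--Viehweg vanishing.
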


This leads to a notion of a multiplier ideal of a module.

\begin{defn}
\label{def:multsubmod}
 Let $M$ be a \fg\ $R$-module, where $R$ is as above. We define the \emph{multiplier submodule of $M$} to be
 \begin{align*}
 	\sJ(M)&:=\sum_{\pitoR} \Image(\Hom(\canon,M) \otimes_R \canon \to M)\\
   	& =\sum_{\pitoR} \tr_{\canon}(M).
\end{align*}
For $\pitoR$ a particular regular alteration, we define
    \[\sJ_\pi(M):=\Image(\Hom(\canon,M) \otimes_R \canon \to M)= \tr_{\canon}(M).\]
\end{defn}

Finally, we recall a criterion for rational singularities and a multiplier-ideal-like object.
\begin{definition}
    Suppose $R$ is an excellent reduced locally equidimensional scheme of {equal} characteristic zero with a dualizing complex and $\pi : Y \to \Spec R$ is a resolution of singularities.  Then the submodule $\Gamma(\omega_Y) \subseteq \omega_R$ is called the \emph{multiplier module} or \emph{Grauert-Riemenschneider sheaf}, denoted $\mJ(\omega_R)$.  It is a straightforward application of Grothendieck duality and the vanishing theorems mentioned above that $R$ has rational singularities if and only if the following two conditions hold.
    \begin{enumerate}
        \item $R$ is Cohen-Macaulay.
        \item $\Gamma(\omega_Y) =: \mJ(\omega_R) = \omega_R$.
    \end{enumerate}
    This criterion is sometimes called Kempf's criterion for rational singularities \cite{KempfToroidalEmbeddings}.  See \cite{Murayama.VanishingForQSchemes} for generalizations. 
\end{definition}

The following theorem will be useful to us.

\begin{proposition}[{\cite[Theorem 5.2]{Hara.GeometricInterpretationOfTightClosureAndTestIdeals}, see also \cite{SmithMultiplierTestIdeals} and \cite[Chapter 6, Theorem 3.7]{SchwedeSmith.FBook}}]
    \label{prop.JOmegaReducesToTauOmega}
    Suppose $R$ is a reduced locally equidimensional ring essentially of finite type over a field $k$ of characteristic zero.  Then $\mJ(\omega_R) \subseteq \omega_R$ reduces modulo $p \gg 0$ to $\tau(\omega_{R_p}) \subseteq \omega_{R_p}$.  Here $\tau(\omega_{R_p})$ is the parameter test module \cite{SmithTestIdealsInLocalRings,Hara.GeometricInterpretationOfTightClosureAndTestIdeals,SchwedeTuckerTestIdealsOfNonprincipalIdeals}.  
\end{proposition}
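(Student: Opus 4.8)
This is Hara's theorem \cite{Hara.GeometricInterpretationOfTightClosureAndTestIdeals} (see also \cite{SmithMultiplierTestIdeals,SchwedeSmith.FBook}); here is a sketch of the argument I would give. The plan is to spread the whole geometric picture out over a finitely generated $\bZ$-algebra and then, over each large prime, identify the reduction of $\mJ(\omega_R)$ with a Cartier-stable submodule of $\omega_{R_p}$ of full generic rank, which forces it to be the parameter test module. So first I would choose a finitely generated $\bZ$-subalgebra $A \subseteq k$ over which $R$, a resolution of singularities $\pi\colon Y \to \Spec R$, the normalized dualizing complexes of $R$ and of $Y$, and the inclusion $\mJ(\omega_R) = \Gamma(\omega_Y) \subseteq \omega_R$ are all defined (in the usual reduction-mod-$p$ sense, \cite{HochsterHunekeTightClosureInEqualCharactersticZero}). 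Inverting finitely many elements of $A$, I may assume $\pi_A\colon Y_A \to \Spec R_A$ is proper and birational with $Y_A$ smooth over $A$, that formation of $\omega_{R_A}$ and of its $R_A$-module structure commutes with base change to the closed fibers, and --- the spreading out of Grauert--Riemenschneider vanishing \cite{GRVanishing,Murayama.VanishingForQSchemes} --- that $\myR^i\pi_{A*}\omega_{Y_A} = 0$ for $i>0$. Then generic flatness and cohomology-and-base-change, after one more shrinking of $\Spec A$, give for every closed point $\mu \in \Spec A$, with $\kappa(\mu)$ of characteristic $p$, that $\pi_\mu\colon Y_\mu \to \Spec R_\mu$ stays proper birational with $Y_\mu$ regular, that $\myR^i\pi_{\mu*}\omega_{Y_\mu} = 0$ for $i>0$, and that $\Gamma(\omega_{Y_A}) \otimes_A \kappa(\mu) \cong \Gamma(Y_\mu, \omega_{Y_\mu})$ sits inside $\omega_{R_A} \otimes_A \kappa(\mu) \cong \omega_{R_\mu}$. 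It therefore suffices to prove $\Gamma(Y_\mu, \omega_{Y_\mu}) = \tau(\omega_{R_\mu})$ inside $\omega_{R_\mu}$ for all $p \gg 0$.

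The containment $\tau(\omega_{R_\mu}) \subseteq \Gamma(Y_\mu, \omega_{Y_\mu})$ is the formal one, and needs no largeness of $p$ beyond the spreading out. Recall $\tau(\omega_{R_\mu})$ is the smallest $R_\mu$-submodule of $\omega_{R_\mu}$ that agrees with $\omega_{R_\mu}$ at every minimal prime and is stable under all the Cartier operators $F^e_*\omega_{R_\mu} \to \omega_{R_\mu}$ \cite{SmithTestIdealsInLocalRings,SchwedeTuckerTestIdealsOfNonprincipalIdeals}. Since $\pi_\mu$ is birational, $\Gamma(Y_\mu, \omega_{Y_\mu})$ agrees with $\omega_{R_\mu}$ at minimal primes; and since $Y_\mu$ is regular, $\omega_{Y_\mu}$ is invertible, so the Cartier operator $F_*\omega_{Y_\mu} \to \omega_{Y_\mu}$ is surjective. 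Pushing forward along $\pi_\mu$ and using that the trace map $\pi_{\mu*}\omega_{Y_\mu} \to \omega_{R_\mu}$ is a morphism of Cartier modules (Grothendieck duality for Frobenius), one sees $\Gamma(Y_\mu, \omega_{Y_\mu})$ is a Cartier-stable submodule of $\omega_{R_\mu}$ of full generic rank, so minimality of $\tau(\omega_{R_\mu})$ yields the inclusion.

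The reverse containment $\Gamma(Y_\mu, \omega_{Y_\mu}) \subseteq \tau(\omega_{R_\mu})$ is where the characteristic zero vanishing theorems genuinely enter, and it is the main obstacle. One wants: for $p \gg 0$ the trace $\Gamma(Y_\mu, F^e_*\omega_{Y_\mu}) \to \Gamma(Y_\mu, \omega_{Y_\mu})$ is surjective for $e \gg 0$, and the relevant higher cohomology of $Y_\mu$ continues to vanish after twisting by Frobenius. Granting this, a parameter test element $c$ of $R_\mu$ (which also spreads out) lets one write any local section of $\omega_{Y_\mu}$ as $c$ times a Cartier image of a section of $\omega_{R_\mu}$, which places it in $\tau(\omega_{R_\mu})$. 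Equivalently, after localizing at a maximal ideal $\fm$, one identifies $\tau(\omega_{R_\mu})$ with the Matlis dual of $0^{*}_{H^{\dim R_\mu}_{\fm}(R_\mu)}$ and identifies this tight closure of zero with $\ker\big(H^{\dim R_\mu}_{\fm}(R_\mu) \to H^{\dim R_\mu}_{\fm}(\myR(\pi_\mu)_{*}\cO_{Y_\mu})\big)$. In either formulation the required surjectivity and vanishing are reductions modulo $p \gg 0$ of Grauert--Riemenschneider and Kawamata--Viehweg-type vanishing, which one proves in large characteristic by lifting modulo $p^2$ and Frobenius-splitting techniques in the style of Deligne--Illusie and Mehta--Ramanathan \cite{MehtaRamanathanFrobeniusSplittingAndCohomologyVanishing,MehtaSrinivasRatImpliesFRat}; this transfer of vanishing to large characteristic is the delicate point, and is precisely why the statement only claims $p \gg 0$.
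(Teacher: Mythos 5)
The paper gives no proof of this proposition; it is stated as background and attributed outright to Hara \cite{Hara.GeometricInterpretationOfTightClosureAndTestIdeals}, Smith \cite{SmithMultiplierTestIdeals}, and Schwede--Smith \cite{SchwedeSmith.FBook}, with only the parenthetical reminder that $\tau(\omega_{R_p})$ is locally Matlis dual to $H^d_{\fm}(R_p)/0^*_{H^d_{\fm}(R_p)}$. So there is no proof in the paper for your argument to be compared against; your write-up is a sketch of what the cited sources do, and it reflects the standard structure of Hara's argument accurately: spread everything out over a finitely generated $\bZ$-algebra, check that $\pi_\mu$ remains a resolution, that Grauert--Riemenschneider vanishing persists, and that $\Gamma(\omega_{Y_A})\subseteq \omega_{R_A}$ base-changes cleanly; then get the easy containment $\tau(\omega_{R_\mu}) \subseteq \Gamma(Y_\mu,\omega_{Y_\mu})$ from the characterization of $\tau(\omega)$ as the smallest Cartier-stable full-generic-rank submodule plus compatibility of the trace $\pi_{\mu*}\omega_{Y_\mu}\to\omega_{R_\mu}$ with Cartier structures; and finally invoke the Deligne--Illusie/Mehta--Ramanathan lifting-mod-$p^2$ vanishing technology for the hard containment. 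One small imprecision: you write that $\tau(\omega_{R_\mu})$ is ``the Matlis dual of $0^*_{H^d_\fm(R_\mu)}$,'' but as the paper's own parenthetical notes, $\tau(\omega_{R_\mu})$ is Matlis dual to the \emph{quotient} $H^d_\fm(R_\mu)/0^*_{H^d_\fm(R_\mu)}$, i.e.\ $\tau(\omega_{R_\mu})=\Ann_{\omega_{R_\mu}}\bigl(0^*_{H^d_\fm(R_\mu)}\bigr)$; the dual of $0^*$ itself is $\omega_{R_\mu}/\tau(\omega_{R_\mu})$. With that fix, your dualized formulation of the two containments (as $0^*\subseteq\ker$ being easy and $\ker\subseteq 0^*$ being the content of Hara's theorem for $p\gg 0$) is the right way to see where the deep vanishing input enters.
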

Recall that locally, $\tau(\omega_{R_p})$ is Matlis dual to $H^d_{\fm}(R_p)/0^*_{H^d_{\fm}(R_p)}$ where $d = \dim R_{\fm}$ for $\fm \subseteq R$ maximal.


\subsection{The Koszul complex}

Let $x$ be an element of $R$ and denote by $K_{\mydot}(x;R)$ the Koszul complex on $x$, that is the mapping cone of the map $R\to R$ given by multiplication by $x$. More generally, given a sequence $\mathbf{x}=(x_1,\dots,x_n)$ we denote by $K_{\mydot}(\mathbf{x};R)=K_{\mydot}(x_1,\dots,x_n;R)\colon= K_{\mydot}(x_1;R)\otimes_R\cdots\otimes_R K_{\mydot}(x_n;R)$. Sometimes it will be helpful to work dually, so we define by $K^\mydot(\mathbf{x};R)$ the dual Koszul complex on $\mathbf{x}$ and note that
\[K^{\mydot}(\mathbf{x};R)\colon=\Hom_R(K_{\mydot}(\mathbf{x};R),R)\cong\Sigma^{-n}K_{\mydot}(\mathbf{x};R).\]
Given an $R$-complex $X$, we denote the Koszul complex on $\mathbf{x}$ with coefficients in $X$ by $K_{\mydot}(\mathbf{x};X)\colon=K_{\mydot}(\mathbf{x};R)\otimes_RX$ and do the same for the dual Koszul complex. We will use the following notation for the (co)homology.
\begin{align*}
    H_*(\mathbf{x};X)&\colon=H_*(K_{\mydot}(\mathbf{x};X))\\
    H^*(\mathbf{x};X)&\colon=H^*(K^\mydot(\mathbf{x};X))
\end{align*}
Note that $H_i(\mathbf{x};X)\cong H^{n-i}(\mathbf{x};X)$.  In particular, if $M$ is an $R$-module, then $H_0({\bf x}; M) \cong M/{\bf x}M \cong H^{n}({\bf x}; M)$.

Note that as a chain complex $K_{\mydot}(x;R)$ lives in homological degrees $0$ to $n$, while as a co-chain complex it lives in cohomological degrees $-n$ to $0$.

We will be most interested in the depth-sensitivity of the Koszul complex.

\begin{defn}
    Let $I=(f_1,\dots,f_n)$ be an ideal of $R$ and $X$ an $R$-complex. The $I$-depth of $X$ is 
    \[\depth_R(I,X):=n-\sup\{i~|~H_i(f_1,\dots,f_n;X)\neq0\}.\]
    When $R$ is local with maximal ideal $\fm$, we will often write $\depth_R(X)$ for the $\fm$-depth of $X$.
\end{defn}

We are particularly interested in complexes of maximal depth and maximal Cohen-Macaulay complexes. For an in-depth survey of this circle of ideas, see \cite{IMSW:2021}.
{
    
\begin{defn}[\cf \cite{Roberts.CMComplexesAndAnalyticProofOfNewIntersection,BhattAbsoluteIntegralClosure,IMSW:2021}]
    Let $(R,\fm,k)$ be a local Noetherian ring with maximal ideal $\fm$ and residue field $k$. We say that an $R$-complex $X$ has \emph{maximal depth} if
    \begin{enumerate}
        \item $H(X)$ is bounded;
        \item $H_0(X)\to H_0(k\otimes_R^\bL X)$ is nonzero; and
        \item $\depth_R(X)=\dim(R)$, (equivalently $H^i_\fm(X)=0$ for $i< d= \dim(R)$ and $H^d_{\m}(X) \neq 0$, see \cite[(2.1.1)]{IMSW:2021}).
    \end{enumerate}
    If additionally, $H^i_\fm(X)=0$ for $i\neq\dim(R)$ we say $X$ is \emph{big Cohen-Macaulay}.

    Suppose now that $R$ is not necessarily local.  We say that a bounded complex $X$ is \emph{locally Cohen-Macaulay} if $H(X)$ is finitely generated, and if for each prime ideal $Q \in \Spec R$, we have that $X_Q$ is big Cohen-Macaulay over $R_Q$.
\end{defn}

Over a local ring $R$, $X$ can be big Cohen-Macaulay but not 
locally Cohen-Macaulay even if $X$ has finitely generated cohomology.  The point is that the support of $H(X)$ need not agree with $\Spec R$.  This issue even appears for modules.  For instance, if $R = k\llbracket x,y\rrbracket/(xy)$ and $M = R/(x)$,  then $M$ is a big Cohen-Macaulay $R$-module, but it is not locally Cohen-Macaulay in our sense (since it is zero after localizing at $Q = (y)$).

If $R$ is an excellent { locally equidimensional} ring containing $\Q$ with a dualizing complex, an important example of a locally Cohen-Macaulay complex comes from a resolution of singularities or regular alteration.

\begin{lemma}[{\cite{Roberts.CMComplexesAndAnalyticProofOfNewIntersection,Roberts.HomologicalInvariantsOfModulesOverCommutative,GRVanishing,Murayama.VanishingForQSchemes}}]  
    \label{lem.CMComplexFromResolution}
    Suppose $R$ is a reduced excellent locally equidimensional ring over $\bQ$ containing a dualizing complex, and suppose that $\pi : Y \to \Spec R$ is a regular alteration.  Then $\myR \Gamma(\cO_Y)$ is a locally Cohen-Macaulay $R$-complex.
\end{lemma}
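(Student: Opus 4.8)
The plan is to reduce to the case that $R$ is local and then run the standard Grothendieck-duality argument: the Grothendieck dual of $\myR\Gamma(\cO_Y)=\myR\pi_*\cO_Y$ will turn out to be, up to twisting by a line bundle, $\myR\pi_*\omega_Y$, which Grauert--Riemenschneider vanishing collapses to a single finitely generated module sitting in one cohomological degree; local duality then reads off the needed vanishing of local cohomology. To begin: since $\Spec R$ is affine and $\pi$ is proper, $\myR\Gamma(\cO_Y)\simeq\myR\pi_*\cO_Y$ has finitely generated, bounded cohomology. For $Q\in\Spec R$, flat base change along $R\to R_Q$ gives $(\myR\pi_*\cO_Y)_Q\simeq\myR\pi_{Q*}\cO_{Y_Q}$ with $\pi_Q\colon Y_Q:=Y\times_{\Spec R}\Spec R_Q\to\Spec R_Q$, and one checks that $\pi_Q$ is again a regular alteration ($Y_Q$ is regular because its local rings are local rings of $Y$; properness, surjectivity and generic finiteness are stable under base change; and decomposing $Y$ into its integral regular components, each dominating a component of $\Spec R$, exhibits $Y_Q$ as the disjoint union of the corresponding components over $\Spec R_Q$), while $\Spec R_Q$ is again reduced, excellent, a $\bQ$-algebra with dualizing complex, and equidimensional because $R$ is locally equidimensional. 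So it suffices to treat $(R,\fm,k)$ local, reduced, excellent, equidimensional of dimension $d$, a $\bQ$-algebra, with normalized dualizing complex $\omega_R^\bullet$, and to show $\myR\pi_*\cO_Y$ is big Cohen--Macaulay over $R$.

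In this local setting, note first that $Y$ is regular and each of its components dominates a $d$-dimensional component of $\Spec R$ via a generically finite map, so $Y$ is equidimensional of dimension $d$ and Gorenstein. Tracking the codimension function (which, as $R$ is catenary and equidimensional, equals $-\dim R/\mathfrak p$ at each prime $\mathfrak p$) along $\pi$ forces $\pi^!\omega_R^\bullet$ to be concentrated in cohomological degree $-d$ on each component of $Y$, where it is an invertible sheaf since $Y$ is regular; hence $\pi^!\omega_R^\bullet\simeq\omega_Y[d]$ with $\omega_Y:=\mathcal H^{-d}(\pi^!\omega_R^\bullet)$. Grothendieck duality for the proper morphism $\pi$ then gives
\[
    \RHom{R}\big(\myR\pi_*\cO_Y,\,\omega_R^\bullet\big)\;\simeq\;\myR\pi_*\,\pi^!\omega_R^\bullet\;\simeq\;\big(\myR\pi_*\omega_Y\big)[d],
\]
and by Grauert--Riemenschneider vanishing as generalized in \cite{Murayama.VanishingForQSchemes} (see also \cite{GRVanishing}), $\myR^i\pi_*\omega_Y=0$ for $i>0$, so the right side is $\Gamma(\omega_Y)[d]$: a finitely generated $R$-module placed in cohomological degree $-d$, and nonzero since $\myR\pi_*\cO_Y$ is nonzero (its $H^0$ contains $R$) and $\RHom{R}(-,\omega_R^\bullet)$ is an anti-equivalence on $D^b_{\mathrm{coh}}(R)$. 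Local duality now yields $\big(H^i_\fm(\myR\pi_*\cO_Y)\big)^\vee\cong H^{-i}\big(\Gamma(\omega_Y)[d]\big)$, which is $\Gamma(\omega_Y)$ for $i=d$ and $0$ otherwise; thus $H^i_\fm(\myR\pi_*\cO_Y)=0$ for $i\neq d$ and $H^d_\fm(\myR\pi_*\cO_Y)\neq0$, so $\depth_R(\myR\pi_*\cO_Y)=d=\dim R$. Together with the boundedness noted above, this supplies the first and third conditions in the definition of maximal depth, as well as the extra vanishing needed to upgrade it to big Cohen--Macaulay.

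It remains to verify the second defining condition, that $H_0(\myR\pi_*\cO_Y)\to H_0\big(k\LTensor{R}\myR\pi_*\cO_Y\big)$ is nonzero. By naturality of $Z\mapsto k\LTensor{R}Z$ along the unit $R\to\myR\pi_*\cO_Y$, this map carries $1\in\pi_*\cO_Y=H_0(\myR\pi_*\cO_Y)$ to the image of $1$ under the structure map of the derived $k$-algebra $k\LTensor{R}\myR\pi_*\cO_Y$. Using (derived) base change for the proper map $\pi$, $k\LTensor{R}\myR\pi_*\cO_Y\simeq\myR\Gamma\big(Y,\cO_Y\LTensor{R}k\big)$ as derived $k$-algebras, and since $\cO_Y\LTensor{R}k$ is connective with $\mathcal H^0=\cO_{Y_0}$ where $Y_0:=\pi^{-1}(V(\fm))$, truncation produces a ring homomorphism $H^0\big(k\LTensor{R}\myR\pi_*\cO_Y\big)\to\Gamma(Y_0,\cO_{Y_0})$ sending $1$ to $1$. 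As $\pi$ is surjective, $Y_0$ is nonempty, so $\Gamma(Y_0,\cO_{Y_0})\neq0$; hence $1\neq0$ in $H^0\big(k\LTensor{R}\myR\pi_*\cO_Y\big)$ and the map in question is nonzero.

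The one genuinely substantive step is the duality computation in the middle paragraph: identifying $\pi^!\omega_R^\bullet$ with a shift of a line bundle --- which is exactly where the hypotheses (reduced, equidimensional, excellent, dualizing complex) are used --- and then invoking Grothendieck duality, Grauert--Riemenschneider vanishing, and local duality in the required generality of excellent $\bQ$-schemes with dualizing complexes, for which one leans on Murayama's results. Everything else is formal bookkeeping.
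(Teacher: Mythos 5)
Your proof is correct and follows the same route as the paper's (deliberately brief) argument: reduce to the local case, apply Grothendieck duality to identify $\RHom{R}(\myR\pi_*\cO_Y,\omega_R^\bullet)$ with $\myR\pi_*\omega_Y[d]$, collapse the latter to $\Gamma(\omega_Y)[d]$ by Grauert--Riemenschneider vanishing in Murayama's generality, read off the local cohomology vanishing via local duality (this is where equidimensionality enters), and get the nontriviality condition from the nonemptiness of the closed fiber. You have simply supplied the bookkeeping the paper leaves implicit --- in particular the check that localization preserves the regular-alteration and ring hypotheses, the identification $\pi^!\omega_R^\bullet\simeq\omega_Y[d]$ via the codimension function, and the derived base-change argument making precise why the unit survives in $H_0(k\LTensor{R}\myR\pi_*\cO_Y)$.
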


This is well known to experts but the proof is short, and usually asserted only in the domain case, so we sketch the proof.

\begin{proof}
    We may assume that $(R,\fm, k)$ is local of dimension $d$.  Then $\myR\Gamma(\cO_Y)$ is a bounded complex with finitely generated cohomology since $\pi$ is proper.  Then the vanishing statement follows from \cite{Murayama.VanishingForQSchemes, GRVanishing} and Matlis, local and Grothendieck duality (we crucially use equidimensionality here).  For the non-triviality statement, this follows as the fiber over $Y_k$ over the closed point $\m \in \Spec R$ is nonempty.
\end{proof}
Suppose additionally that $R$ is local but not equidimensional, and $I \subseteq R$ is the intersection of minimal primes $Q$ such that $\dim R/Q = \dim R$.  Set $R' = R/I$.  If $Y \to \Spec R'$ is a regular alteration, then $\myR\Gamma(\cO_Y)$ is an $R$-complex of maximal depth, and even a big Cohen-Macaulay complex.  However, it is not locally Cohen-Macaulay in our sense as it becomes zero after localizing at minimal primes $P$ with $\dim R/P < \dim R$.  Some of the results we prove in this paper could be generalized outside of the equidimensional case by using such a partial resolution of singularities (for instance, colon capturing), however, then we would lose the property that the closures commute with localization (see e.g. Proposition \ref{prop.FlatMapsWithRationalSingularitiesFibers}).  
}

\begin{defn}
Let $I\subseteq R$ be an ideal and $X$ an $R$-complex. The $I$-adic completion of $X$, denoted $\Lambda^I X$, is defined as
\[\Lambda^IX:=\lim_{n\geq1}X/I^nX=\lim\left(\cdots\to X/I^3X\to X/I^2X\to X/IX\right).\]
The derived $I$-adic completion of $X$, denoted $L\Lambda^I(X)$, is computed by taking $P\to X$ a projective resolution and defining
\[L\Lambda^I(X):=\Lambda^IP.\]
This complex is well-defined in $D(R)$ and there is a natural map $X\to\Lambda^I X$. We say $X$ is derived $I$-complete if this map is a quasi-isomorphism.
\end{defn}

If $(R,\fm)$ is a local ring, the Koszul complex of a locally Cohen-Macaulay complex (or, more generally, of a derived $\fm$-complete complex of maximal depth) has nice vanishing properties.

\begin{lemma}[\cf \cite{IMSW:2021} Lemma~2.6]\label{depth}
Let $(R,\fm)$ be a local ring with $x_1,\dots,x_d$ a system of parameters and $X$ an $R$-complex of maximal depth. If $X$ is derived $\fm$-complete or if $H(X)$ is finitely generated, then
\[H_i(x_1,\dots,x_k;X)=0\]
for all $i\geq1$ and all $1\leq k\leq d$. Equivalently,
\[H^i(x_1,\dots,x_k;X)=0\]
for all $i\leq k$ and all $1\leq k\leq d$.
\end{lemma}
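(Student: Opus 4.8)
The plan is to rephrase the conclusion in terms of $I$-depth and deduce it from the maximal-depth hypothesis. First, the two displayed statements are equivalent: the self-duality $H_i(x_1,\dots,x_k;X)\cong H^{k-i}(x_1,\dots,x_k;X)$ recalled above turns the vanishing of $H_i$ for all $i\ge 1$ into the vanishing of $H^j$ in the complementary range, so it suffices to treat the homological version. Unwinding the definition of $I$-depth, the assertion that $H_i(x_1,\dots,x_k;X)=0$ for all $i\ge 1$ is exactly the inequality $\depth_R\big((x_1,\dots,x_k),X\big)\ge k$. Thus I would prove, for each $1\le k\le d$, that $\depth_R\big((x_1,\dots,x_k),X\big)\ge k$.

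I would establish this by descending induction on $k$, starting at $k=d$. When $k=d$ the ideal $(x_1,\dots,x_d)$ is $\fm$-primary, so $\sqrt{(x_1,\dots,x_d)}=\fm$; since (under either hypothesis---$H(X)$ finitely generated, or $X$ derived $\fm$-complete) $I$-depth depends only on $\sqrt I$ and not on the chosen generators, we get $\depth_R\big((x_1,\dots,x_d),X\big)=\depth_R(\fm,X)=\dim R=d$, the last equality being precisely that $X$ has maximal depth. For the inductive step I would use the standard fact that adjoining one generator to an ideal raises $I$-depth by at most one: concretely, $K_{\mydot}(x_1,\dots,x_{k+1};X)$ is the mapping cone of multiplication by $x_{k+1}$ on $K_{\mydot}(x_1,\dots,x_k;X)$, and the associated long exact sequence, combined with Nakayama's lemma (when $H(X)$ is finitely generated) or the derived Nakayama lemma (when $X$ is derived $\fm$-complete), gives $\depth_R\big((x_1,\dots,x_{k+1}),X\big)\le\depth_R\big((x_1,\dots,x_k),X\big)+1$. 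Hence from $\depth_R\big((x_1,\dots,x_{k+1}),X\big)\ge k+1$ we obtain $\depth_R\big((x_1,\dots,x_k),X\big)\ge k$, completing the induction.

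The descent itself is routine once those two properties of $I$-depth are in hand, and the only place the dichotomy on $X$ is genuinely used---hence the main obstacle---is in establishing them beyond the Noetherian-module setting: that for a possibly non-finitely-generated derived $\fm$-complete complex, $I$-depth is radical-invariant and grows by at most one when a parameter is adjoined. I would handle this exactly as in \cite[Lemma~2.6]{IMSW:2021} and its surrounding discussion: the key point is that $K_{\mydot}(x_1,\dots,x_k;X)=K_{\mydot}(x_1,\dots,x_k;R)\otimes_R X$ is again derived $\fm$-complete, being a perfect complex tensored with a derived $\fm$-complete complex, so the Nakayama-type step can be run at the level of complexes (via the derived Nakayama lemma) rather than on individual, not-necessarily-complete, homology modules. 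Given all this, the cohomological reformulation in the statement is then purely formal.
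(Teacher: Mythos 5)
Your proposal is correct and aligns with the route the paper indicates: the paper gives no proof of its own here, citing \cite[Lemma~2.6]{IMSW:2021} and then observing in the ensuing remark that the crux of that argument is Nakayama's lemma applied to the homology of $K_{\mydot}(\mathbf{x};X)$, which is available because that complex is derived $\fm$-complete (resp.\ has finitely generated homology). Your sketch isolates exactly that Nakayama step as the one place where the dichotomy on $X$ matters, which is the same observation the paper makes.
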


\begin{rem}\label{fgdepth}
    Note that the original lemma in \cite{IMSW:2021} only considered $X$ derived $\fm$-complete. The requirement that $X$ be derived $\fm$-complete allows for the possibility that $X$ does not have finitely generated homology, which is of particular relevance to the study of big Cohen-Macaulay modules. Key to the proof of \cite[Lemma~2.6]{IMSW:2021} is \cite[Lemma~1.3]{IMSW:2021}, which says that the homology of derived $\fm$-complete complexes satisfy Nakayama's Lemma; that is to say, if $X$ is derived $\fm$-complete, then $\fm H_i(X)=H_i(X)$ implies $H_i(X)=0$. Thus, the proof of the original lemma applies to complexes with finitely generated homology as well.  In this paper, we will essentially only consider the case when $X$ has finitely generated homology.
\end{rem}

\subsection{Local cohomology via the Koszul complex}
The discussion below will be relevant to several results in later sections. The basic ideas are contained in \cite{Dwyer-Greenlees:2002}. First, let $f\in R$ and consider the following commutative diagram
\[\begin{tikzcd}
    R\arrow[r]\arrow[d,swap,"f^k"]&R\arrow[d,"f^{k+1}"]\\
    R\arrow[r,"f"]&R
\end{tikzcd}
\]
which induces a map $K^{\mydot}(f^k)\to K^{\mydot}(f^{k+1})$. Set $K^{\mydot}(f^\infty):=\colim_k K^{\mydot}(f^k)$. Given a collection of elements of $R$, $\mathbf{f}=f_1,\dots,f_n$, set $\mathbf{f}^k=(f_1^k,\dots,f_n^k)$. We can tensor these natural maps $K^{\mydot}(f_i^k)\to K^{\mydot}(f_i^{k+1})$ to get a natural map $K^{\mydot}(\mathbf{f}^k)\to K^{\mydot}(\mathbf{f}^{k+1})$. Set $K^{\mydot}(\mathbf{f}^\infty):=\colim_k K^{\mydot}(\mathbf{f}^k)$ and note that
\[K^{\mydot}(\mathbf{f}^\infty)=K^{\mydot}(f_1^\infty)\otimes_R\cdots\otimes_R K^{\mydot}(f_n^\infty).\]
By \cite[Lemma~6.9]{Dwyer-Greenlees:2002} this complex is isomorphic to a free $R$-complex concentrated in cohomological degrees $0$ to $n$. It also computes the local cohomology of an $R$-complex $M$ via the following formula:
\[H^i_{(\mathbf{f})}(M)=H^i(\mathbf{f}^\infty;M)=H^i(K^{\mydot}(\mathbf{f}^\infty)\otimes_RM).\]

\begin{rem}
Note that the directed system computing $K^{\mydot}(\mathbf{f^{\infty}})$ can be refined to include all maps $K^{\mydot}(f_1^{a_1},\dots,f_n^{a_n})\to K^{\mydot}(f_1^{a_1},\dots,f_i^{a_i+1},\dots,f_n^{a_n})$.
\end{rem}

The following result connecting depth and local cohomology will also be useful.

\begin{lemma}
    \label{lem.VanishingLocalCohomologyForIdealsContainingARegSeq}
    Suppose $(R, \m)$ is a Noetherian local ring and $X \in D^{\geq 0}(R)$ is a bounded complex with $H(X)$ finitely generated.  Suppose $I \subseteq \m$ contains elements $x_1, \dots, x_r$ and $X$ has $(x_1, \dots, x_r)$-depth $r$.  Then $H^i_I(X) = 0$ for $i  < r$.
\end{lemma}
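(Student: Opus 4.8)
The plan is to reduce everything to the Koszul-complex description of local cohomology recalled just above, and then prove the required vanishing by an induction powered by Nakayama's lemma. First I would unwind the hypothesis: by the definition of $I$-depth, saying that $X$ has $(x_1,\dots,x_r)$-depth $r$ means $H_i(x_1,\dots,x_r;X)=0$ for all $i\ge 1$, which by $H_i(\mathbf{x};X)\cong H^{r-i}(\mathbf{x};X)$ is the same as $H^i(x_1,\dots,x_r;X)=0$ for all $i<r$, i.e. $K^{\bullet}(x_1,\dots,x_r;R)\otimes_R X\in D^{\ge r}(R)$. Next, extend $x_1,\dots,x_r$ to a generating set $I=(g_1,\dots,g_n)$ with $g_1=x_1,\dots,g_r=x_r$. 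Since $H^i_I(X)=\colim_k H^i(g_1^k,\dots,g_n^k;X)$ (from the formula $H^i_I(X)=H^i(K^{\bullet}(\mathbf{g}^{\infty})\otimes_R X)$ recalled above, together with the facts that derived tensor and $H^i$ commute with filtered colimits), it suffices to prove
\[
    (\star)\qquad H^i(g_1^{a_1},\dots,g_n^{a_n};X)=0 \qquad\text{for all } i<r \text{ and all } a_1,\dots,a_n\ge 1,
\]
and then pass to the colimit.

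For $(\star)$ I would induct on $\sum_j(a_j-1)$. In the base case $a_1=\dots=a_n=1$, write $K^{\bullet}(g_1,\dots,g_n;R)\otimes_R X\cong K^{\bullet}(g_{r+1},\dots,g_n;R)\otimes_R\big(K^{\bullet}(x_1,\dots,x_r;R)\otimes_R X\big)$: the first tensor factor is a bounded complex of finite free modules concentrated in cohomological degrees $\ge 0$, and the second factor lies in $D^{\ge r}(R)$ by the previous paragraph, so the tensor product lies in $D^{\ge r}(R)$, which is $(\star)$. For the inductive step, after permuting the generators assume we are raising the exponent on $g_1$; set $Y:=K^{\bullet}(g_2^{a_2},\dots,g_n^{a_n};R)\otimes_R X$, a bounded complex whose cohomology $H^j(Y)$ is finitely generated (because $H(X)$ is and $K^{\bullet}(\cdots;R)$ is a bounded complex of finite free modules). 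From the short exact sequence of complexes $0\to Y[-1]\to K^{\bullet}(g_1^{a};R)\otimes_R Y\to Y\to 0$ one obtains, for every $i$ and every $a\ge 1$, a short exact sequence
\[
    0\to \coker\!\big(g_1^{a}\colon H^{i-1}(Y)\to H^{i-1}(Y)\big)\to H^i(g_1^{a},g_2^{a_2},\dots,g_n^{a_n};X)\to \ker\!\big(g_1^{a}\colon H^{i}(Y)\to H^{i}(Y)\big)\to 0 .
\]
Applying this with $a=a_1$, the inductive hypothesis (the $i<r$ vanishing of the middle term) forces $g_1^{a_1}$ to be surjective on $H^j(Y)$ for all $j<r-1$; since $g_1\in I\subseteq\fm$ and $H^j(Y)$ is finitely generated, Nakayama's lemma gives $H^j(Y)=0$ for $j<r-1$, and the same vanishing forces $g_1^{a_1}$ — hence $g_1$, hence $g_1^{a_1+1}$ — to be injective on $H^{r-1}(Y)$. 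Feeding $a=a_1+1$ back into the displayed sequence now kills both outer terms for every $i<r$ (the left one because $H^{i-1}(Y)=0$ when $i-1<r-1$, the right one because $H^i(Y)=0$ for $i<r-1$ and $g_1^{a_1+1}$ is injective on $H^{r-1}(Y)$), so $H^i(g_1^{a_1+1},g_2^{a_2},\dots,g_n^{a_n};X)=0$ for all $i<r$. This completes the induction, and taking the colimit over $k$ in $(\star)$ yields $H^i_I(X)=0$ for $i<r$.

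The step I expect to be the crux is the inductive half of $(\star)$: this is exactly the statement that Koszul-cohomology depth with respect to $(x_1,\dots,x_r)$ is unchanged when the generators are replaced by powers, equivalently that $\inf\{i:H^i_I(X)\ne 0\}$ depends only on $\sqrt{I}$. Everything else — the translation of the hypothesis, the base-case degree count, and the passage to the colimit — is formal. A shorter but less self-contained alternative would be to invoke the known identity $\depth(I,X)=\inf\{i:H^i_I(X)\ne 0\}$ for bounded complexes with finitely generated homology together with its dependence only on the radical of $I$ (and the inclusion $(x_1,\dots,x_r)\subseteq I$); I would nonetheless prefer to keep the short Nakayama induction above so that the lemma stands on its own within this paper.
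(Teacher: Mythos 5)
Your proof is correct, though it takes a genuinely different route from the paper's. The paper inducts on $r$ and stays in local cohomology throughout: the triangle $X\xrightarrow{x_1}X\to K_\mydot(x_1;X)$ gives a long exact sequence in $H^\bullet_I$, the inductive hypothesis applied to $K_\mydot(x_1;X)$ (which has $(x_2,\dots,x_r)$-depth $r-1$) kills $H^{i-1}_I(K_\mydot(x_1;X))$ for $i<r$, so multiplication by $x_1$ is injective on $H^i_I(X)$; since every element of $H^i_I(X)$ is annihilated by a power of $x_1\in I$, this forces $H^i_I(X)=0$. That torsion trick is what makes the paper's argument so short. You instead descend to the finite Koszul stages of the colimit computing $H^\bullet_I$ and prove the stronger statement that $H^i(g_1^{a_1},\dots,g_n^{a_n};X)=0$ for $i<r$ at \emph{every} exponent tuple, by an induction on the exponent vector whose step is closed with Nakayama (using that the cohomologies of $Y$ are finitely generated and $g_1\in\m$). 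The paper's version is shorter; yours is more explicit about the mechanism (Koszul depth is insensitive to raising the generators to powers), establishes vanishing at every finite stage rather than only after passing to the colimit, and as a small bonus never actually uses the hypothesis $X\in D^{\geq 0}(R)$, so it proves a slightly more general statement. Your closing remark correctly identifies the crux: the content of both proofs is exactly that $\inf\{i:H^i_I(X)\neq 0\}$ depends only on the radical of $I$.
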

We will be primarily interested in the case that $x_1, \dots, x_r$ is a (partial) system of parameters and $X$ has maximal depth, in which case it has $(x_1, \dots, x_r)$-depth $r$ by \cite[Lemma 2.6]{IMSW:2021}.
\begin{proof}
    We proceed by induction on $r$ the base case $r = 0$ holding vacuously.  We have the following long exact sequence
    \[ 
        \dots \to H^{i-1}_{I}(K_{\mydot}(x_1; X)) \to H^i_{I}(X) \xrightarrow{\cdot x_1} H^i_{I}(X) \to \dots
    \]
    By the depth hypothesis, we see that $K_{\mydot}(x_1; X)$ lives in $D^{\geq 0}$ and in fact, $K_{\mydot}(x_1; X)$ has $(x_2, \dots, x_r)$-depth $r-1$.  Suppose $i < r$.  Then by induction $H^{i-1}_{I}(K_{\mydot}(x_1; X)) = 0$ and so the multiplication by $x_1$-map is injective.  But $H^i_{I}(X)$ is made up of $I$-power-torsion elements, and so it must vanish as claimed.
\end{proof}

\subsection{The trace ideal associated to a complex}

We will need the following notion of a trace ideal associated to a complex.  In our case, the complex will typically be $\myR\Gamma(\cO_Y)$ for an alteration $Y \to \Spec R$.  See also \cite[Subsection 2.4]{Lyu.PropertiesBirationalDerivedSplinters}.

\begin{defn}
    \label{defn.TraceOfCopmlex}
    Suppose we have a cochain complex $M \in D(R)$, then we define the \emph{\traceIdealComplex} of $M$ in $R$ to be $\trcomp{M}(R) := \sum_{\phi} \Image(H^0(M) \xrightarrow{H^0 \phi} R)$ where $\phi$ runs over $\Hom_{D(R)}(M, R)$. 
\end{defn}
\begin{lemma}
\label{lem.TraceOfComplexDGAVersion}
    With notation as above, suppose $M$ is also a dg $R$-algebra.  Then
    \[
        \trcomp{M}(R) = \{f \in R \;|\; \text{ multiplication by $f$ can be factored as $\times f : R \to M \xrightarrow{\rho} R$ in $D(R)$} \}.
    \]
    Furthermore, we also have that $\trcomp{M}(R)$ is identified with the image 
    \[
        \Image\big(\Hom_{D(R)}(M, R) \to \Hom_{D(R)}(R, R) \cong R\big).
    \]
    where the map is induced by applying $\Hom_{D(R)}(-, R)$ to $R \to M$.
\end{lemma}

\begin{proof}
    Suppose $f$ is in the right side.  Then $f$ is in the image of $H^0 {\rho}$, $f = (H^0 \rho)(g)$ for some $g \in H^0(M)$.  
    The containment $\supseteq$ follows.
    {%
    Conversely, if $f \in \trcomp{M}$, then we can write $f = \phi_1(g_1) + \dots + \phi_n(g_n)$ for some $g_i \in H^0(M)$ and $\phi_i = H^0(\psi_i)$ for some $\psi_i : M \to R$ in $D(R)$. Let $G_i\in M^0$ represent the cohomology class for each $g_i$ and let $\psi_{G_i}$ be the multiplication by $G_i$ map on $M$. Note that $\psi_{G_i}$ is independent of choice of representative, for if $G_i'\in M^0$ also represents the cohomology class $g_i$, then $G_i-G_i'=g(h)$ for some $h\in M^{-1}$. Then multiplication by $h$ defines a nullhomotopy, so $\phi_{G_i}$ is homotopic to $\phi_{G_i'}$ and thus they define the same map in $D(R)$.
    Setting $\psi_i' = \psi_i \circ \psi_{G_i}$
    and $\phi_i' = H^0(\psi_i')$ we see that 
    \[
        f = \sum_i \phi_i'(1) = \big(\sum_i \phi_i'\big)(1).
    \]
    Setting $\psi' = \sum_i \psi_i'$ we see that the composition 
    \[
        R \to M \xrightarrow{\psi'} R
    \]
    is multiplication by $f$.
    }

    For the second statement, we notice that image is exactly those $f \in R$ such that $\times f$ factors as $R \to M \xrightarrow{\rho} R$. 
\end{proof}

\if0
\begin{proof}
    Suppose $f$ is in the right side.  Then $f$ is in the image of $H^0 {\rho}$, $f = (H^0 \rho)(g)$ for some $g \in H^0(M) = \Gamma(\cO_Y)$.  
    The containment $\supseteq$ follows.
    
    Conversely, if $f \in \trcomp{M}$, then we can write $f = \phi_1(g_1) + \dots + \phi_n(g_n)$ for some $g_i \in H^0(M)$ and $\phi_i = H^0(\psi_i)$ for some $\psi_i : M \to R$ in $D(R)$.  Setting $\psi' = \psi \circ g_i$ (which makes sense because $M = \myR\Gamma(\cO_Y)$ can be viewed in $D(\Gamma(\cO_Y))$)
    and $\phi_i' = H^0(\psi')$ we see that 
    \[
        f = \sum_i \phi_i'(1) = \big(\sum_i \phi_i'\big)(1).
    \]
    Setting $\psi' = \sum_i \psi_i'$ we see that the composition 
    \[
        R \to M \xrightarrow{\psi'} R
    \]
    is multiplication by $f$.
\end{proof}
\fi

{%
\subsection{Reduction modulo $p \gg 0$}
\label{subsec.ReductionModP}
In various theorems below, we will compare our closure operations to characteristic $p > 0$ operations via reduction modulo $p$.  We will be somewhat informal in our notation around these arguments and give a quick summary below of what the precise setup entails.  For a more detailed explanation, we refer the reader to \cite{HochsterHunekeTightClosureInEqualCharactersticZero} (see also \cite[Chapter 6]{SchwedeSmith.FBook}).

Suppose $k$ is a field of characteristic zero, $R$ is a finite type $R$-algebra, $J \subseteq R$ is an ideal and $M$ is a finitely generated $R$-module.  One can fix $A \subseteq k$ a finitely generated $\bZ$-algebra, as well as an $A$-algebra $R_A$, an ideal $J_A$ and a module $M_A$ whose base change $- \otimes_A k$ recover $R$, $J$, and $M$.  Indeed, this can even be done for finitely many ideals, modules, projective schemes over $R$, and various maps between the modules and between the rings schemes.

For any maximal ideal $\mathfrak{t} \in A$, we can form the base changes $R_{\mathfrak{t}} := R_A \otimes_A A/\mathfrak{t}$, $J_{\mathfrak{t}} := J_A \otimes_A A/\mathfrak{t}$, $M_{\mathfrak{t}} :=M_A \otimes_A A/\mathfrak{t}$, etc.  Most properties of $R$, $J$, $M$, or maps between them can be preserved under this process, at least for all $\mathfrak{t}$ in a dense open subset $U \subseteq \m\text{-}\Spec A$, see for instance \cite{HochsterHunekeTightClosureInEqualCharactersticZero}.  Notably, exactness of sequences of maps can be preserved.

When we talk about reduction modulo $p \gg 0$ of an ideal $J \subseteq R$,  we are implicitly fixing such an $A$, as well as a relevant $U \subseteq \m\text{-}\Spec A$ which preserve our desired properties (for instance, surjectivity or injectivity of a map between modules, or a containment of ideals).  Indeed, when we write $J_{p}$ for $p \gg 0$, we mean $J_{\mathfrak{t}}$ for $\mathfrak{t} \in U$.  
}
\section{The \KosClos operation in {equal} characteristic zero}

In this section, we define the \KosClos on ideals and prove basic properties about it.  We work in the following setting.

\begin{setting}
    \label{set.NiceSetup}
    Throughout this section, $R \supseteq \bQ$ is a reduced Noetherian excellent ring with a dualizing complex (in particular, $R$ has finite Krull dimension). 
\end{setting}

\begin{defn}
    We suppose $R$ is as in \autoref{set.NiceSetup}.
    Let $I=(\mathbf{f})$ be an ideal and let $\pi:Y\to\Spec R$ be a regular alteration. Then define the \emph{Koszul-Hironaka closure} (the \emph{\KosClos}) of $I$ inside $R$ to be
        \[I^{\Kos}_R:=\ker\big(R\to H_0(\mathbf{f};\myR \Gamma(\cO_Y))\big)\]
    where the map in question comes from following either path around the following commutative square
    \[\begin{tikzcd}
        R\arrow[r]\arrow[d]&\myR \Gamma(\cO_Y)\arrow[d]\\
        K_{\mydot}(\mathbf{f};R)\arrow[r]&K_{\mydot}(\mathbf{f};\myR \Gamma(\cO_Y))
    \end{tikzcd}\]
    Note that when $R$ is local, $(x_1,\dots,x_d)$ is a system of parameters for $R${, and $R$ is equidimensional}, then
        \[H_i(x_1,\dots,x_k;\myR \Gamma(\cO_Y))=0\]
    for $i\geq1$ and all $k$ by  {\autoref{lem.CMComplexFromResolution} and} \autoref{depth}.
\end{defn}

Given that this is independent of choices (see the result immediately below), we see that if $R$ has rational singularities, then $R \cong \myR \Gamma(\cO_Y)$ if $Y$ is a resolution of singularities.  As an immediate consequence we see that $J^{\Kos} = J$ for \emph{every} ideal $J \subseteq R$.  For converse statements, see \autoref{lem.ParameterIdealObservationsFollowingHochsterHuneke} below.

\begin{prop}\label{prop.kosIndependence}
    With notation as in \autoref{set.NiceSetup}, the \KosClos of an ideal $I$ inside $R$ is a well-defined closure operation. Furthermore, when $R$ is local, it is faithful.
\end{prop}
\begin{proof}
We need to show that \KosClos is well-defined, which requires showing that it is independent of both the regular alteration chosen as well as the choice of generating set. 

For the alteration, let $\pi_i\colon Y_i\to\Spec R$ for $i=1,2$ be regular alterations and let $\pi\colon Y\to \Spec R$ be a regular alteration dominating the two. Then we get that the natural maps $\myR\Gamma(\cO_{Y_i})\to \myR\Gamma(\cO_Y)$ split, as the $Y_i$ are smooth and thus have rational singularities and so are derived splinters. Then the following maps split for any $\mathbf{f}\in R$
\[H_0(\mathbf{f};\myR\Gamma(\cO_{Y_i}))\to H_0(\mathbf{f};\myR \Gamma(\cO_Y)).\]
This implies that
\[\ker(R\to H_0(\mathbf{f};\myR\Gamma(\cO_{Y_i})))\cong\ker(R\to H_0(\mathbf{f};\myR \Gamma(\cO_Y)))\]
and thus \KosClos is independent of the choice of regular alteration. 

To see that \KosClos is extensive, let $I=(\mathbf{f})$ and note that the map $R\to H_0(\mathbf{f};\myR \Gamma(\cO_Y))$ factors through $H_0(f_1,,\dots,f_c;R)=R/I$ 
and so $I\subseteq I^{\Kos}$. Similarly, \KosClos is order preserving because if $I\subseteq J$, we can extend a generating set for $I$ to a generating set for $J$ and thus get that $I^{\Kos}\subseteq J^{\Kos}$.

We will now prove that \KosClos is independent of the generating set ${\bf f}$ of $I$.  Since two choices for a generating set can each be extended to a third which contains both of them, we just need to show that adding a single redundant generator leads to the same closure.  More generally, since clearly $I \subseteq I^{\Kos}$, it suffices to show that for any $g \in I^{\Kos}$, we have that $(I + (g))^{\Kos} = I^{\Kos}$ and this will prove that \KosClos is idempotent as well.

Hence pick $g \in I^{\Kos}$.  We want to show
\[
    \ker(R\to H_0(\mathbf{f};\myR \Gamma(\cO_Y)))=\ker(R\to H_0(\mathbf{f},g;\myR \Gamma(\cO_Y))).
\]
To see this, we note that the (dg) $R$-algebra map $R\to K_{\mydot}(\mathbf{f};\myR \Gamma(\cO_Y))$ induces a map of $R$-algebras $R\to H_0(\mathbf{f};\myR\Gamma(\cO_Y))$ and thus $H_0(\mathbf{f};\myR \Gamma(\cO_Y))$ is also an $R/I^{\Kos}$-{module}. Consider now the map on triangles
\[\begin{tikzcd}
    R\arrow[r,"\cdot g"]\arrow[d]&R\arrow[r]\arrow[d]&K_{\mydot}(g;R)\arrow[r]\arrow[d]&\hspace{1mm}\\
    K_{\mydot}(\mathbf{f};\myR \Gamma(\cO_Y))\arrow[r,"\cdot g"]&K_{\mydot}(\mathbf{f};\myR \Gamma(\cO_Y))\arrow[r]&K_{\mydot}(\mathbf{f},g;\myR \Gamma(\cO_Y))\arrow[r]&\hspace{1mm}
\end{tikzcd}\]
which induces the following map on long exact sequences in homology
\[\begin{tikzcd}
    R\arrow[r,"\cdot g"]\arrow[d]&R\arrow[r]\arrow[d]&R/(g)\arrow[d]\\
   H_0(\mathbf{f};\myR \Gamma(\cO_Y))\arrow[r,"\cdot g"]&H_0(\mathbf{f};\myR \Gamma(\cO_Y))\arrow[r]&H_0(\mathbf{f},g;\myR \Gamma(\cO_Y))
\end{tikzcd}\]
We claim the lower left horizontal map is actually the zero map.  The key observation is that $H_0(\mathbf{f};\myR \Gamma(\cO_Y))$ is an $R$-algebra because $K_{\mydot}(\mathbf{f};\myR \Gamma(\cO_Y))$ may be viewed as a differential graded $R$-algebra (as it is a tensor product of differential graded algebras).  Hence, since $g\in I^{\Kos}$, we see that $g$ annihilates $1 \in H_0(\mathbf{f};\myR \Gamma(\cO_Y))$ and so the lower left multiplication-by-g-map is zero as claimed. 
Then $H_0(\mathbf{f};\myR\cO_Y)\hookrightarrow H_0(\mathbf{f},g;\myR\cO_Y)$ which implies 
\[\ker\big(R\to H_0(\mathbf{f},g;\myR \Gamma(\cO_Y))\big)=\ker\big(R\to H_0(\mathbf{f};\myR \Gamma(\cO_Y))\big).\]
Thus, the operation \KosClos is well-defined and idempotent, and hence a closure operation.


Finally, if $R$ is local, \KosClos is faithful because $\myR \Gamma(\cO_Y)$ is locally Cohen-Macaulay and thus the natural map $R\to \myR \Gamma(\cO_Y)\to H_0(k\otimes_R^\bL \myR \Gamma(\cO_Y))$ is nonzero.  As this map factors through $\myR \Gamma(\cO_Y)\to H_0(\mathbf{x};\myR \Gamma(\cO_Y))$ where $\mathbf{x}=x_1,\dots,x_n$ is a generating set for $\fm$, we are done.
\end{proof}

{%
\begin{remark}
\label{rem.DerivedCatAnnihilatorThanksToBriggs}
    As pointed out to us by Benjamin Briggs, $I^{\Kos}$ can also be viewed as the set of elements of $R$ that annihilate the object 
    $K_{\mydot}({\bf f}; \myR \Gamma(\cO_Y) )$ in $D(R)$.  More generally, as pointed out to us by Briggs, if an element of $R$ annihilates $H_0(C_{\mydot})$ where $C_{\mydot}$ is a differential graded $R$-algebra, then that element annihilates $C_{\mydot}$ in $D(R)$. 
    
    We briefly sketch the argument that Briggs explained to us.
    Let $C_{\mydot} := K_{\mydot}({\bf f}; \myR \Gamma(\cO_Y) )$ and let $\phi_g\in\Hom_R(C_{\mydot},C_{\mydot})$ be the multiplication-by-$g$ map for $g\in I^{\KH}$.  Consider $1_C \in C_0$ which becomes the unit $1 \in H^0(C_{\mydot})$.  We know that $g 1_C = d(h)$ for some $h \in C_1$.  Then $\phi_g=d\circ\phi_h+\phi_h\circ d$ where $\phi_h\colon C_{\mydot}\to\Sigma C_{\mydot}$ is multiplication by $h$. This gives a nullhomotopy and thus $\phi_g=0\in\Hom_{D(R)}(C_{\mydot},C_{\mydot})$.  As pointed out by Srikanth Iyengar, this is essentially the same as the corresponding argument for the Koszul complex, see \cite[Proposition 1.6.5]{BH}.
\end{remark}
}

The formation of the \KosClos operation commutes with localization, completion, and in fact any flat map with rational singularity fibers.  Famously, the formation of tight closure in characteristic $p > 0$ does not commute with localization \cite{BrennerMonsky.TCLocalization}, although plus closure does.  Compare the following with \cite[Section 7]{HochsterHunekeFRegularityTestElementsBaseChange}.  

\begin{proposition}
    \label{prop.FlatMapsWithRationalSingularitiesFibers}
    Suppose that $R \to S$ is a flat map of rings both satisfying the conditions of \autoref{set.NiceSetup}.  Suppose further that the fibers of $R \subseteq S$ have rational singularities (for instance, if the fibers are nonsingular).  Then 
    \[  
        I^{\Kos}S = (IS)^{\Kos}
    \]
    As a consequence, the formation of \KosClos commutes with localization and completion along any ideal: $I^{\Kos} W^{-1}R = (W^{-1}I)^{\Kos}$ and $I^{\Kos} \widehat{R} = (I \widehat{R})^{\Kos}$.
\end{proposition}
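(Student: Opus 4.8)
The plan is to base-change a fixed regular alteration of $\Spec R$ along $R\to S$, identify the derived global sections by flat base change, and observe that the result (after one further resolution) is again a regular alteration of $\Spec S$ computing $(IS)^{\Kos}$; the equality $I^{\Kos}S=(IS)^{\Kos}$ then drops out of the exactness of $-\otimes_R S$.

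First I would fix a regular alteration $\pi\colon Y\to\Spec R$, write $I=(\mathbf{f})$, and form $Y_S:=Y\times_{\Spec R}\Spec S$ with projections $\pi'\colon Y_S\to\Spec S$ and $\rho\colon Y_S\to Y$. Since $\pi$ is proper and $R\to S$ is flat, cohomology and flat base change give a natural isomorphism $\myR\Gamma(\cO_Y)\otimes_R^{\myL}S\cong\myR\Gamma(\cO_{Y_S})$ in $D(S)$, and the left-hand side is just $\myR\Gamma(\cO_Y)\otimes_R S$ by flatness. Next, $\rho$ is flat (a base change of $\Spec S\to\Spec R$) and its fibers are precisely the fibers of $\Spec S\to\Spec R$, which are normal with rational singularities; since $Y$ is regular, the standard behavior of normality and of rational singularities under flat morphisms with such fibers shows that $Y_S$ is normal with rational singularities (one may invoke \cite{ElkikRationalityOfCanonicalSings}, \cite{Murayama.VanishingForQSchemes}, or \cite{KollarKovacsSingularitiesBook}; alternatively one checks Cohen-Macaulayness of $Y_S$ and Kempf's criterion $\mJ(\omega_{Y_S})=\omega_{Y_S}$ fiberwise).

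I then need to see that $\pi'$ is, after resolving, a regular alteration of $\Spec S$. It is proper and surjective by base change. Flatness of $\Spec S\to\Spec R$ together with going-down shows that minimal primes of $S$ lie over minimal primes of $R$, and flatness of $\rho$ shows that each irreducible component of $Y_S$ dominates a component of $Y$; combining these with the generic finiteness of $\pi$ yields that $\pi'$ is generically finite and that every component of $Y_S$ dominates a component of $\Spec S$. As $Y_S$ is a reduced (indeed normal) quasi-excellent $\bQ$-scheme of finite type over $S$, \cite{Tempkin.DesingularizationQuasiExcellentChar0} provides a resolution $g\colon Z\to Y_S$ with $Z$ regular; then $Z\to\Spec S$ is a genuine regular alteration, and since $Y_S$ has rational singularities, $\cO_{Y_S}\to\myR g_*\cO_Z$ is an isomorphism, so $\myR\Gamma(\cO_Z)\cong\myR\Gamma(\cO_{Y_S})\cong\myR\Gamma(\cO_Y)\otimes_R S$ in $D(S)$.

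Finally I would assemble the computation. By \autoref{prop.kosIndependence}, $(IS)^{\Kos}$ may be computed with the alteration $Z$, so $(IS)^{\Kos}=\ker\big(S\to H_0(\mathbf{f};\myR\Gamma(\cO_Z))\big)$. Forming a Koszul complex commutes with $-\otimes_R S$, and taking homology commutes with the flat base change $-\otimes_R S$, so $H_0(\mathbf{f};\myR\Gamma(\cO_Y)\otimes_R S)\cong H_0(\mathbf{f};\myR\Gamma(\cO_Y))\otimes_R S$; tensoring the defining exact sequence $0\to I^{\Kos}\to R\to H_0(\mathbf{f};\myR\Gamma(\cO_Y))$ with the flat module $S$ then identifies $I^{\Kos}S=I^{\Kos}\otimes_R S$ with $\ker\big(S\to H_0(\mathbf{f};\myR\Gamma(\cO_Y))\otimes_R S\big)$, which equals $(IS)^{\Kos}$ by the previous step. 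For the consequences: a localization $W^{-1}R$ is flat over $R$ with fibers that are localizations of residue fields, hence regular, and for excellent $R$ the $\ia$-adic completion is flat with geometrically regular, in particular rational, fibers, so the main statement applies (note that injectivity of $R\to S$ was never used in the final computation), giving $I^{\Kos}W^{-1}R=(W^{-1}I)^{\Kos}$ and $I^{\Kos}\widehat{R}=(I\widehat{R})^{\Kos}$. The only non-formal ingredients are the ascent of rational singularities under the flat base change $\rho$ and the verification that $\pi'$ remains, after resolution, a regular alteration in the technical sense (generic finiteness and the component-dominance condition); I expect this second point to be the main bookkeeping obstacle, while everything else is flat base change and exactness of $-\otimes_R S$.
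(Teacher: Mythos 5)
Your proof is correct and follows essentially the same route as the paper's: base change a fixed regular alteration along $R\to S$, invoke flat base change to identify $\myR\Gamma(\cO_{Y_S})$ with $\myR\Gamma(\cO_Y)\otimes_R S$, use ascent of rational singularities along flat maps with rational fibers to recognize $Y_S$ as having rational singularities, resolve once more, and finish by exactness of $-\otimes_R S$. The only cosmetic differences are that the paper starts from a resolution rather than a general regular alteration (which lets it use a short zerodivisor argument to show components of $W\times_R S$ dominate components of $\Spec S$, whereas you argue via flatness and going-down) and that the paper invokes the derived projection formula where you invoke flat base change; these are interchangeable here.
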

\begin{proof}
    Let $\pi_R : W \to \Spec R$ be a projective resolution of singularities and consider the base change $\pi_S : W \times_R S \to \Spec S$.  While $\pi_S$ is not a resolution of singularities, it is proper and $W \times_R S$ does have rational singularities by \cite[Footnote {\bf Q} to Table 2]{Murayama.UniformTreatmentOfGrothendieckLocalization} (for the variety case this is \cite[Th\'eor\`em 5]{ElkikDeformationsOfRational}, also see \cite[Theorem 9.3]{Murayama.VanishingForQSchemes}).   

    Since $R \to S$ is flat, we see that $\pi_S : W \times_R S \to \Spec S$ is birational in the weaker sense that there exists an element $f \in R$, not in any minimal prime of $R$ (and hence not in a minimal prime of $S$) such that $\pi_S$ is an isomorphism after inverting $f$.  In fact, we claim that components of $W \times_R S$ dominate components of $\Spec S$ and so $W \times_R S \to \Spec S$ is birational in the stronger sense (that there is a bijection between irreducible components) as well.
    Working locally on a chart $\Spec T$ of $W$ intersecting each component of $W$ nontrivially, we have that $R[f^{-1}] \to T[f^{-1}]$ is an isomorphism, hence so is $S[f^{-1}] \to (S \otimes_R T)[f^{-1}]$.  Thus if $S \otimes_R T$ has a minimal prime not dominating a minimal prime of $S$, it must contain $f$.  But since $R \to T$ is birational, $f$ is not in a minimal prime of $T$, and so $T \xrightarrow{\times f} T$ is injective since $T$ is reduced.  But then $S \otimes_R T \xrightarrow{\times f} S \otimes_R T$ is also injective.  So $f$ cannot be in a minimal prime.  Thus $W \times_R S \to \Spec S$ really is birational in this stronger sense as claimed.
    
    It follows that if $\kappa : Y \to W \times_R S \to \Spec S$ is a resolution of 
    $W \times_R S$, and thus also of $\Spec S$, that 
    \[ 
        \myR \Gamma(\cO_Y) \cong \myR \Gamma( \cO_{W \times_R S}) \cong (\myR \Gamma( \cO_W)) \otimes_R S
    \] 
    where the second isomorphism is the derived projection formula (\cite[II, Proposition 5.6]{HartshorneResidues}) and the first follows as $W \times_R S$ has rational singularities.  We do not need to derive the tensor product as $R \to S$ is flat.

    Hence, 
    \begin{equation}
        \label{eq.prop.FlatMapsWithRationalSingularitiesFibers}
        R \to H_0(K_{\mydot}({\bf f}; \myR \Gamma(\cO_W)))
    \end{equation}
    base changes to 
    \[
        S \to H_0(K_{\mydot}({\bf f}; \myR \Gamma(\cO_W))) \otimes_R S = H_0(K_{\mydot}({\bf f}; \myR \Gamma(\cO_W) \otimes_R S)) = H_0\big(K_{\mydot}({\bf f}; \myR \Gamma( \cO_Y))\big) 
    \]
    Thus $I^{\Kos}$, the kernel of \autoref{eq.prop.FlatMapsWithRationalSingularitiesFibers}, base changes to the kernel of $R \to H_0(K_{\mydot}({\bf f}; \myR \Gamma(\cO_W)))$, which is what we wanted to show.
\end{proof}

We next show persistence.

\begin{proposition}
\label{prop.PersistenceOfKos}
    Suppose $R \to S$ is a map of rings satisfying \autoref{set.NiceSetup}.  Then $I^{\Kos} S \subseteq (IS)^{\Kos}$. That is, \KosClos is persistent.
\end{proposition}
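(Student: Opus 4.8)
The plan is to compare the relevant Koszul homology over $R$ with that over $S$ via base change along $R \to S$, and to use the fact that resolutions dominate one another to reduce to a single compatible alteration. Fix an ideal $I = ({\bf f})$ of $R$ with generators ${\bf f} = f_1, \dots, f_n$; note that ${\bf f}$ also generates $IS$. First I would pick a regular alteration $\pi_R : W \to \Spec R$ and a regular alteration $\pi_S : Y \to \Spec S$; by composing $Y \to \Spec S \to \Spec R$ and passing to a regular alteration dominating both $W$ and the closure of (the components of) $Y$, using resolution/alteration of excellent $\mathbb{Q}$-schemes \cite{Tempkin.DesingularizationQuasiExcellentChar0}, we may assume there is a single regular alteration $\pi : Z \to \Spec R$ equipped with a map $Z \to Y$ over $\Spec R$. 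Since $Y$ is regular it has rational singularities, hence is a derived splinter, so $\myR\Gamma(\cO_Y) \to \myR\Gamma(\cO_Z)$ splits in $D(S)$ (and hence in $D(R)$); by \autoref{prop.kosIndependence} the $S$-side closure $(IS)^{\Kos}$ may be computed using either $Y$ or $Z$, so without loss we use $Z$ for both.

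Next I would apply the derived base change $- \LTensor{R} S$ to the defining map $R \to H_0({\bf f}; \myR\Gamma(\cO_Z))$. Tensoring the commutative square in the definition of \KosClos with $S$, and using that Koszul complexes commute with base change, $K_{\mydot}({\bf f}; \myR\Gamma(\cO_Z)) \LTensor{R} S \cong K_{\mydot}({\bf f}; \myR\Gamma(\cO_Z) \LTensor{R} S)$, we get a commutative square relating $S \to \myR\Gamma(\cO_Z) \LTensor{R} S$ to its Koszul complex. Taking $H_0$ and using right-exactness of $H_0$ under $- \LTensor{R} S$ (i.e. $H_0(C \LTensor{R} S)$ receives a natural surjection-compatible map from $H_0(C) \otimes_R S$ — more precisely there is a natural map $H_0(C) \otimes_R S \to H_0(C \LTensor{R} S)$ which is an iso in degree $0$ since $H_0$ is right exact), I get a commutative diagram
\[
\begin{tikzcd}
R \otimes_R S \arrow[r]\arrow[d,equal] & H_0({\bf f}; \myR\Gamma(\cO_Z)) \otimes_R S \arrow[d] \\
S \arrow[r] & H_0\big({\bf f}; \myR\Gamma(\cO_Z) \LTensor{R} S\big).
\end{tikzcd}
\]
Chasing an element $g \in I^{\Kos}$, its image $g \otimes 1$ in $H_0({\bf f}; \myR\Gamma(\cO_Z)) \otimes_R S$ is zero (since $g$ maps to $0$ in $H_0({\bf f}; \myR\Gamma(\cO_Z))$), hence the image of $g$ in $H_0({\bf f}; \myR\Gamma(\cO_Z) \LTensor{R} S)$ is zero. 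So it remains only to identify $\myR\Gamma(\cO_Z) \LTensor{R} S$ with $\myR\Gamma(\cO_{Z'})$ for a suitable regular alteration $Z' \to \Spec S$ — or at least with a complex through which $S$-side \KosClos can be computed.

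The main obstacle is exactly that last identification: unlike in \autoref{prop.FlatMapsWithRationalSingularitiesFibers}, here $R \to S$ is an arbitrary ring map, so $\myR\Gamma(\cO_Z) \LTensor{R} S$ is not the pushforward of the structure sheaf of a regular alteration of $\Spec S$. To handle this I would instead argue at the level of $D(S)$ directly: having fixed the map $Z \to Y$ over $\Spec R$, the map $\myR\Gamma(\cO_Z) \to \myR\Gamma(\cO_Y)$ is a map of dg $R$-algebras, and after applying $- \LTensor{R} S$ we obtain a map of dg $S$-algebras $\myR\Gamma(\cO_Z) \LTensor{R} S \to \myR\Gamma(\cO_Y) \LTensor{R} S$; composing with the algebra structure map $\myR\Gamma(\cO_Y) \LTensor{R} S \to \myR\Gamma(\cO_Y) \LTensor{S} S \cong \myR\Gamma(\cO_Y)$ (using that $Y$ is an $S$-scheme so $\myR\Gamma(\cO_Y)$ is already an $S$-algebra and the natural map factors the $R$-linear structure) gives a map of $S$-dg-algebras $\myR\Gamma(\cO_Z) \LTensor{R} S \to \myR\Gamma(\cO_Y)$ compatible with the units. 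Therefore if $g \in R$ kills $1$ in $H_0({\bf f}; \myR\Gamma(\cO_Z))$, then its image $gS$ kills $1$ in $H_0({\bf f}; \myR\Gamma(\cO_Z) \LTensor{R} S)$ by the diagram chase above, and the dg-algebra map pushes this to the statement that $gS$ kills $1$ in $H_0({\bf f}; \myR\Gamma(\cO_Y))$, i.e. $gS \subseteq (IS)^{\Kos}$, exactly as in the idempotence argument of \autoref{prop.kosIndependence} where annihilating the unit of a dg algebra is what matters. Finally, since $g S$ being in the kernel means each generator-times-$g$ lies appropriately, and $I^{\Kos} S$ is generated over $S$ by such $g$'s, we conclude $I^{\Kos} S \subseteq (IS)^{\Kos}$.
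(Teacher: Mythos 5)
Your overall strategy (compatible alterations plus a commutative diagram chase through Koszul $H_0$) is the right instinct, but the key construction at the start does not work, and the later steps inherit the problem.

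The first gap is the assertion that one can find a regular alteration $Z \to \Spec R$ together with a map $Z \to Y$ over $\Spec R$, where $Y$ is a regular alteration of $\Spec S$. Since $R \to S$ is an arbitrary ring map in this proposition (not finite, not surjective, not generically finite), such a $Z$ need not exist: if, say, $S = R[t]$, then $Y$ has dimension strictly larger than $\dim R$, while any regular alteration of $\Spec R$ is generically finite over $\Spec R$; a map $Z \to Y$ over $\Spec R$ would produce an $R$-algebra map $\Gamma(\cO_Y) \to \Gamma(\cO_Z)$, hence an $R$-algebra map from $S$ into a module-finite $R$-algebra, which is absurd in general. So the dominating alteration you postulate simply is not there.

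Even setting this aside, the direction of the induced map on derived global sections is backwards: a map of schemes $Z \to Y$ gives $\myR\Gamma(\cO_Y) \to \myR\Gamma(\cO_Z)$, not $\myR\Gamma(\cO_Z) \to \myR\Gamma(\cO_Y)$ as you write, so the subsequent dg-algebra manipulation after applying $- \LTensor{R} S$ cannot be made to run.

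The correct construction goes the other way around, and avoids derived base change altogether. Start with a resolution $W \to \Spec R$, form the (possibly non-birational) base change $W \times_R S \to \Spec S$, cut down by hyperplane sections to get a closed subscheme $W'$ that is generically finite over $\Spec S$, and then resolve $W'$ to obtain a regular alteration $Y \to \Spec S$ together with a map $Y \to W$ sitting over $\Spec S \to \Spec R$. This immediately gives the commuting square
\[
\begin{tikzcd}
R \arrow[r] \arrow[d] & S \arrow[d] \\
\myR\Gamma(\cO_W) \arrow[r] & \myR\Gamma(\cO_Y),
\end{tikzcd}
\]
and tensoring with $K_{\mydot}({\bf f}; R)$, taking $H_0$, one sees directly that the kernel of $R \to H_0({\bf f}; \myR\Gamma(\cO_W))$ maps into the kernel of $S \to H_0({\bf f}; \myR\Gamma(\cO_Y))$, which is exactly $I^{\Kos}S \subseteq (IS)^{\Kos}$. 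No derived tensor over $R$ or dg-algebra splitting arguments are needed; the whole thing is a functoriality chase once the geometry is set up in the right direction.
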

\begin{proof}
    Let $\pi : W \to \Spec R$ be a projective resolution of singularities.  While the base change $\pi_S : W \times_R S \to \Spec S$ may not be birational, it is still projective and surjective.  By restricting to irreducible components and taking hyperplane sections, there exists a closed subscheme $W' \subseteq W \times_R S$ such that the induced $W' \to \Spec S$ is an alteration.  Taking a further resolution of singularities of $W'$, we can construct a regular alteration $\kappa : Y \to \Spec S$.  In particular we have the following commutative diagram of maps of schemes:
    \[
        \xymatrix{
            Y \ar[d]_{\kappa} \ar[r] & W \ar[d]^{\pi} \\
            \Spec S \ar[r] & \Spec R
        }
    \]
    It follows that we have a diagram 
    \[
        \xymatrix{
            R \ar[d] \ar[r] & S \ar[d] \\
            \myR\Gamma(\cO_W) \ar[r] & \myR \Gamma( \cO_Y).
        }
    \]
    Writing $I = ({\bf f})$, tensoring with $\otimes_R^{\myL} K_{\mydot}({\bf f}; R)$, and taking $0$th homology, we obtain the diagram:
    \[
        \xymatrix{
            R \ar[d] \ar[r] & S \ar[d]\\
            H_0 \big(K_{\mydot}({\bf f}; R)\big) \ar[d] \ar[r] & H_0 \big(K_{\mydot}({\bf f}; S)\big) \ar[d] \\
            H_0 \big(K_{\mydot}({\bf f}; \myR\Gamma(\cO_W))\big) \ar[r] & H_0 \big(K_{\mydot}({\bf f}; \myR \Gamma( \cO_Y))\big).
        }
    \]
    The kernel of the left column maps into the kernel of the right column, and the result follows.
\end{proof}

We observe that the \KosClos behaves well under finite extensions.

\begin{prop}
\label{prop.FiniteExtensionComputationKHClosure}
    Let $R\to S$ be a finite extension of rings satisfying \autoref{set.NiceSetup} and where each minimal prime of $S$ lies over a minimal prime of $R$.  Suppose further that $I\subseteq R$ an ideal. Then
    \[I^{\Kos}=(IS)^{\Kos}\cap R.\]
\end{prop}
\begin{proof}
    Let $I=(\mathbf{f})$. Let $\pi:Y\to\Spec S$ be a resolution of singularities so that the composition $\tau\colon Y\to\Spec R$ is a regular alteration. Then because $K_{\mydot}(\mathbf{f};R)\otimes_RS\cong K_{\mydot}(\mathbf{f};S)$ we have the following diagram where the bottom map is an isomorphism of $R$-modules.
    \[\begin{tikzcd}
        R\arrow[r]\arrow[d,labels=left,"\alpha"]&S\arrow[d, "\beta"]\\
        K_{\mydot}(\mathbf{f};\myR\Gamma(\cO_Y))\arrow[r,labels=below,"="]&K_{\mydot}(\mathbf{f};\myR \Gamma(\cO_Y)).
    \end{tikzcd}\]
    We see that $\ker \alpha = (\ker \beta) \cap R$ as desired.
\end{proof}

We give a direct proof that the \KosClos is contained in the integral closure.  This can also be deduced from results in later sections or from reduction modulo $p \gg 0$, but we give a proof using our resolution of singularities.

\begin{proposition}
    \label{prop.KHClosureInIntegralClosure}
    Suppose $R$ is as in \autoref{set.NiceSetup} and $({\bf f}) = I \subseteq R$ is an ideal.  Then $I^{\Kos} \subseteq \overline{I}$.
\end{proposition}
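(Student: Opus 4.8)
The plan is to pick the regular alteration cleverly and then match the derived-category definition of $I^{\Kos}$ against the sheaf-theoretic description of integral closure on a blowup. First I would reduce to the case that $R$ is a domain: for each minimal prime $P$ the quotient $R/P$ again satisfies \autoref{set.NiceSetup}, persistence (\autoref{prop.PersistenceOfKos}) gives $I^{\Kos}(R/P)\subseteq (I(R/P))^{\Kos}$, and for reduced Noetherian $R$ one has $\overline{I}=\bigcap_{P}\big($preimage of $\overline{I(R/P)}$ under $R\to R/P\big)$; so it is enough to treat domains. Assuming $R$ is a domain (and $I\neq 0$, the case $I=0$ being trivial), and using that $I^{\Kos}$ is independent of the regular alteration by \autoref{prop.kosIndependence}, I would replace $Y$ by a resolution of singularities of the blowup of $I$. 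Thus we may assume $\pi\colon Y\to\Spec R$ is a resolution with $I\cO_Y=\cO_Y(-E)$ an invertible ideal sheaf for an effective Cartier divisor $E$ on the regular integral scheme $Y$. Write $I=(\mathbf f)=(f_1,\dots,f_n)$.

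Next comes the core step. Since $K_{\mydot}(\mathbf f;R)$ is a bounded complex of finite free $R$-modules, the projection formula gives a natural isomorphism $K_{\mydot}(\mathbf f;R)\otimes^{\myL}_R\myR\Gamma(\cO_Y)\simeq \myR\Gamma\big(Y,K_{\mydot}(\mathbf f;\cO_Y)\big)$ in $D(R)$, where $K_{\mydot}(\mathbf f;\cO_Y)=K_{\mydot}(\mathbf f;R)\otimes_R\cO_Y$. Viewing this Koszul complex as a cochain complex in degrees $-n,\dots,0$, its top cohomology sheaf is $\cO_Y/\mathbf f\cO_Y=\cO_Y/I\cO_Y$, so the canonical truncation furnishes a morphism of complexes $K_{\mydot}(\mathbf f;\cO_Y)\to \cO_Y/I\cO_Y$ which in degree $0$ is the quotient $\cO_Y\onto\cO_Y/I\cO_Y$. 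Applying $\myR\Gamma(Y,-)$ and passing to $H^0$ yields a natural map $H_0(\mathbf f;\myR\Gamma(\cO_Y))\to \Gamma(Y,\cO_Y/I\cO_Y)$; chasing the unit map of the Koszul dg-algebra, the composite $R\to H_0(\mathbf f;\myR\Gamma(\cO_Y))\to \Gamma(Y,\cO_Y/I\cO_Y)$ is precisely the canonical map $g\mapsto \bar g$. Hence
\[
   I^{\Kos}=\ker\big(R\to H_0(\mathbf f;\myR\Gamma(\cO_Y))\big)\ \subseteq\ \ker\big(R\to\Gamma(Y,\cO_Y/I\cO_Y)\big)=\{\,g\in R : g\cO_Y\subseteq I\cO_Y\,\}.
\]

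Finally I would show $\{g\in R : g\cO_Y\subseteq I\cO_Y\}\subseteq\overline{I}$. Since $Y$ is regular (hence normal) and $Y\to\Spec R$ is proper and birational with $I\cO_Y$ invertible, $Y$ dominates the normalized blowup of $I$, so every Rees valuation $v$ of $I$ is a positive rational multiple of $\ord_{E_i}$ for some component $E_i$ of $E$, and $\ord_{E_i}(I)=\mathrm{mult}_{E_i}(E)$. If $g\cO_Y\subseteq I\cO_Y=\cO_Y(-E)$ then $\ord_{E_i}(g)\geq \mathrm{mult}_{E_i}(E)$ for all $i$, so $v(g)\geq v(I)$ for every Rees valuation $v$; by the valuative criterion for integral closure of ideals, $g\in\overline{I}$. (Equivalently, one may just invoke the standard identity $\overline{I}=\Gamma(Y,I\cO_Y)\cap R$ for a normal proper birational model on which $I$ becomes invertible.) The only step that requires genuine care is the middle one — converting "$g$ maps to $0$ in $H_0(\mathbf f;\myR\Gamma(\cO_Y))$'' into the concrete condition $g\cO_Y\subseteq I\cO_Y$ — which is handled by the projection formula together with the canonical map from the Koszul complex to its top homology; the reduction to a domain and the passage to $\overline I$ are routine uses of persistence and of standard valuation theory.
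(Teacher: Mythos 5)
Your proof is correct and converges on the same key intermediate step as the paper's own argument — namely, after resolving so that $I\cO_Y$ is invertible, showing $I^{\Kos}\subseteq\ker\big(R\to\Gamma(Y,\cO_Y/I\cO_Y)\big)=\{g\in R : g\cO_Y\subseteq I\cO_Y\}$ and then identifying the latter with $\overline{I}$ — but the organization differs in two small ways. First, you reduce to the domain case at the outset via persistence and the decomposition of $\overline I$ over minimal primes; the paper instead works directly with the reduced ring, finishing with $\overline{IR^{\mathrm N}}\cap R=\overline I$, so the reduction is unnecessary (though harmless). Second, you apply the derived projection formula to the entire Koszul complex, identify $K_{\mydot}(\mathbf f;R)\otimes^{\myL}_R\myR\Gamma(\cO_Y)\simeq\myR\Gamma(Y,K_{\mydot}(\mathbf f;\cO_Y))$, and then truncate \emph{on $Y$} via $K_{\mydot}(\mathbf f;\cO_Y)\to\cO_Y/I\cO_Y$; the paper instead first truncates over $R$ via $K_{\mydot}(\mathbf f;R)\to R/I$ — so that the intermediate kernel is precisely the Hironaka preclosure $I^{\Hir}$ — and then applies the projection formula to the triangle $I\to R\to R/I$, producing a morphism to the triangle $\myR\Gamma(\cO_Y(-G))\to\myR\Gamma(\cO_Y)\to\myR\Gamma(\cO_G)$. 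These are projection-formula-plus-truncation executed in opposite orders: yours is marginally more direct, while the paper's formulation has the advantage of simultaneously establishing $I^{\Kos}\subseteq I^{\Hir}\subseteq\overline I$, which they later reuse. Your final valuation-theoretic step and the cited identity $\overline I=\Gamma(Y,I\cO_Y)\cap R$ are both standard and correct.
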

\begin{proof}
    We first notice that there is a map $K_{\mydot}({\bf f}) \to R/I$.  Hence
    \[
        I^{\Kos} \subseteq \ker\big(R \to H_0 ((R/I) \otimes^{\myL} \myR \Gamma(\cO_Y))\big)
    \]
    for $\pi : Y \to \Spec R$ a resolution of singularities.  Without loss of generality, we may assume that $I \cO_Y = \cO_Y(-G)$ is a line bundle.  
    
    The derived projection formula gives us a map $I \otimes^{\myL} \myR \Gamma(\cO_Y) \to \myR \Gamma(\cO_Y \otimes^{\myL} \myL \pi^* I) = \myR \Gamma(\myL \pi^* I)$, in fact, an isomorphism thanks to \cite[II, Proposition 5.6]{HartshorneResidues}.  Now we have a map $\myL \pi^* I \to I \cO_Y = \cO_Y(-G)$ and so composing, we obtain 
    \[
        I \otimes^{\myL} \myR \Gamma(\cO_Y) \to \myR \Gamma(\cO_Y(-G)).
    \]
    We thus obtain the diagram:
    \[
        \xymatrix{
            I \otimes^{\myL} \myR \Gamma(\cO_Y) \ar[r] \ar[d] & \myR \Gamma(\cO_Y) \ar[r] \ar[d]^= & R/I \otimes^{\myL} \myR \Gamma(\cO_Y) \ar[r]\ar[d] & + 1\\
            \myR \Gamma(\cO_Y(-G)) \ar[r] & \myR \Gamma(\cO_Y) \ar[r] & \myR \Gamma(\cO_G)\ar[r] & + 1
        }
    \]
    It immediately follows that $\ker\big(R \to H_0 ((R/I) \otimes^{\myL} \myR \Gamma(\cO_Y))\big)$ is contained in $\ker\big( R \to H_0(\myR \Gamma(\cO_G)\big)$.   But that equals
    \[ 
        \Image \Big(\Gamma(\cO_Y(-G)) \to \Gamma(\cO_Y)\Big) \cap R = \overline{IR^{\mathrm N}} \cap R = \overline{I}.
    \]
    where $R^{\mathrm{N}}\simeq\Gamma(\cO_Y)$ is the normalization of $R$.
\end{proof}

{%
Finally, we consider a technique that can be used to reduce to the $\m$-primary case, and which will be useful in other contexts as well.

\begin{proposition}
\label{prop.IntersectionContainments}
    Suppose $(R,\m)$ is a local ring satisfying \autoref{set.NiceSetup}.  Suppose $J = ({\bf f})$ and $g \in R$.  Then 
    \[
        J^{\Kos} = \bigcap_{t>0} \big( J + (g^t) \big)^{\Kos}.
    \]
    As a consequence,  
    \[
        J^{\Kos} = \bigcap_{t>0} \big( J + \m^t \ \big)^{\Kos}.
    \]
    and so \KosClos can be computed from the \KosClos of $\m$-primary ideals.  
\end{proposition}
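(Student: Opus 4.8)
The plan is to fix a regular alteration $\pi\colon Y\to\Spec R$ and work throughout with $C:=\myR\Gamma(\cO_Y)$ and the \emph{finitely generated} $R$-module $M:=H_0(\mathbf f;C)=H_0\big(K_{\mydot}(\mathbf f)\otimes_R^{\myL}C\big)$; $M$ is finitely generated because $\pi$ is proper, so $C$ has finitely generated cohomology and hence so does $K_{\mydot}(\mathbf f)\otimes^{\myL}_RC$. By definition $J^{\Kos}=\ker(R\to M)$. Since $J\subseteq J+(g^t)$ for all $t$, order-preservation of \KosClos (\autoref{prop.kosIndependence}) gives the inclusion $J^{\Kos}\subseteq\bigcap_{t>0}(J+(g^t))^{\Kos}$ for free, so the content is the reverse inclusion; I will deduce the $\m^t$-version from the $(g^t)$-version at the end.

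The key step will be to show that for $g\in\m$ (the case used below),
\[
    (J+(g^t))^{\Kos}=\ker\big(R\to M/g^tM\big),
\]
that is, $(J+(g^t))^{\Kos}$ is the preimage of $g^tM\subseteq M$ under $R\to M$. Using independence of the generating set, $(J+(g^t))^{\Kos}=\ker\big(R\to H_0(\mathbf f,g^t;C)\big)$. I would then invoke the iterated-cone description $K_{\mydot}(\mathbf f,g^t)\otimes^{\myL}_RC\simeq\mathrm{cone}\big(K_{\mydot}(\mathbf f)\otimes^{\myL}_RC\xrightarrow{\,\cdot g^t\,}K_{\mydot}(\mathbf f)\otimes^{\myL}_RC\big)$: its long exact homology sequence contains
\[
    M\xrightarrow{\ \cdot g^t\ }M\xrightarrow{\ \alpha_t\ }H_0(\mathbf f,g^t;C)\longrightarrow H_{-1}(\mathbf f;C)\xrightarrow{\ \cdot g^t\ }H_{-1}(\mathbf f;C),
\]
so $\alpha_t$ factors through an injection $M/g^tM\hookrightarrow H_0(\mathbf f,g^t;C)$, while the structure map $R\to H_0(\mathbf f,g^t;C)$ is the composition of the structure map $R\to M$ with $\alpha_t$ (because $R\to C\to K_{\mydot}(\mathbf f,g^t)\otimes^{\myL}C$ factors through $K_{\mydot}(\mathbf f)\otimes^{\myL}C$, using that $C$ is a genuine dg $R$-algebra so that all the maps in sight are the canonical ones). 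The displayed identity follows.

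Next I would intersect over $t$: $\bigcap_{t>0}(J+(g^t))^{\Kos}$ is the preimage of $\bigcap_{t>0}g^tM$ under $R\to M$. Since $M$ is finitely generated over the Noetherian ring $R$ and $g\in\m$, the Artin--Rees lemma applied to $N:=\bigcap_t g^tM$ gives $gN=N$, whence $N=0$ by Nakayama; therefore $\bigcap_{t>0}(J+(g^t))^{\Kos}=\ker(R\to M)=J^{\Kos}$, which is the first identity. For the consequence I would fix generators $g_1,\dots,g_r$ of $\m$ and iterate this first identity — apply it to $(J,g_1)$, then to $(J+(g_1^{t_1}),g_2)$, and so on — to get
\[
    J^{\Kos}=\bigcap_{t_1,\dots,t_r>0}\big(J+(g_1^{t_1},\dots,g_r^{t_r})\big)^{\Kos};
\]
restricting to the diagonal $t_1=\dots=t_r=t$ (a cofinal subfamily of this decreasing family) gives $J^{\Kos}=\bigcap_{t>0}(J+(g_1^t,\dots,g_r^t))^{\Kos}$. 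Finally the inclusions $(g_1^t,\dots,g_r^t)\subseteq\m^t$ and $\m^{r(t-1)+1}\subseteq(g_1^t,\dots,g_r^t)$ show that the decreasing families $\{(J+\m^t)^{\Kos}\}_t$ and $\{(J+(g_1^t,\dots,g_r^t))^{\Kos}\}_t$ are cofinal in one another, hence have equal intersection, which yields $J^{\Kos}=\bigcap_{t>0}(J+\m^t)^{\Kos}$; since each $J+\m^t$ contains $\m^t$ and is thus $\m$-primary, the final assertion about $\m$-primary ideals is immediate.

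The main obstacle I anticipate is the key step — pinning down $\ker\big(M\to H_0(\mathbf f,g^t;C)\big)=g^tM$ together with the compatibility of the structure maps. The delicate point is that $H_{-1}(\mathbf f;C)$ need not vanish for an arbitrary generating set $\mathbf f$ (we are \emph{not} assuming $\mathbf f$ is part of a system of parameters), so $M/g^tM\to H_0(\mathbf f,g^t;C)$ is merely injective rather than an isomorphism; fortunately injectivity is exactly what the argument needs, and it falls out cleanly once one works in the triangulated setting and uses the dg-algebra structure of $C$ to identify the relevant maps. The Artin--Rees/Nakayama step and the cofinality bookkeeping for the $\m^t$-version are routine.
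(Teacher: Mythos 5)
Your proof is correct and follows essentially the same route as the paper's: factor $R\to H_0(\mathbf f,g^t;C)$ through $\alpha\colon R\to M:=H_0(\mathbf f;C)$, use the long exact sequence of the relevant triangle to identify $\ker\big(M\to H_0(\mathbf f,g^t;C)\big)=g^tM$, kill $\bigcap_t g^tM$ by Krull intersection, and then iterate with cofinality bookkeeping to pass to $\m^t$. Your explicit Artin--Rees step is slightly more careful than the paper's terse appeal to ``Nakayama's lemma,'' and you are right to flag the implicit hypothesis $g\in\m$ (the displayed identity fails when $g$ is a unit, so the paper's ``$g\in R$'' must be read with that understanding).
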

\begin{proof}
The $\subseteq$ containments are clear.
    For the first statement, as in the proof of \autoref{prop.kosIndependence} we have the following diagram with exact rows
    \[\begin{tikzcd}
    R\arrow[r,"\cdot g^t"]\arrow[d]&R\arrow[r]\arrow[d, "\alpha"]\arrow[dr]&R/(g^t)\arrow[d]\\
    H_0(\mathbf{f};\myR \Gamma(\cO_Y))\arrow[r,"\cdot g^t"]&H_0(\mathbf{f};\myR \Gamma(\cO_Y))\arrow[r]&H_0(\mathbf{f},g^t;\myR \Gamma(\cO_Y)).
\end{tikzcd}\]
The kernel of the diagonal arrow is $(J + (g^t))^{\Kos}$.
Suppose then that $x \in \bigcap_t \big( J + (g^t) \big)^{\Kos}$.  It follows that $\alpha(x) \in g^t(H_0(\mathbf{f};\myR \Gamma(\cO_Y)))$ for every $t>0$.  As $R$ is local and $H_0(\mathbf{f};\myR \Gamma(\cO_Y))$ is finitely generated, Nakayama's lemma implies that $\alpha(x) = 0$ and hence that $x \in J^{\Kos}$.

For the second statement, if $\m = (x_1, \dots, x_d)$, then observe that it suffices to show that $ J^{\Kos} = \bigcap_{t} \big( J + (x_1^t, \dots, x_d^t) \, \big)^{\Kos}$.  We sketch an argument to prove this below.
Observe that
\[
    \bigcap_{t} \big( J + (x_1^t, \dots, x_d^t)\,\big)^{\Kos} = \bigcap_{t_1, \dots, t_d} \big( J + (x_1^{t_1}, \dots, x_d^{t_d})\,\big)^{\Kos} = \bigcap_{t_1} \dots \bigcap_{t_d} \big( J + (x_1^{t_1}, \dots, x_d^{t_d})\,\big)^{\Kos}
\]
where the first equality comes from cofinality of the sequences of ideals.  But now, using the first statement of the proposition, we can work one intersection at a time, so we obtain
\[
    \bigcap_{t_1} \dots \bigcap_{t_{d-1}} \bigcap_{t_d} \big( J + (x_1^{t_1}, \dots, x_{d-1}^{t_{d-1}}, x_d^{t_d}) \,\big)^{\Kos} = \bigcap_{t_1} \dots \bigcap_{t_{d-1}} \big( J + (x_1^{t_1}, \dots, x_{d-1}^{t_{d-1}})\, \big)^{\Kos} = \dots = J^{\Kos}.
\]
\end{proof}
}
\section{Colon-capturing, rational singularities, and the  Brian\c{c}on-Skoda theorem }

We are ultimately interested in showing that the above closure operation satisfies generalized colon capturing, as this would imply the existence of big Cohen-Macaulay modules in {equal} characteristic zero without requiring reduction to positive characteristic.
We use this section to present results in this direction.

\begin{theorem}\label{thm.cc}
    Let $(x_1,\dots,x_d)$ be a system of parameters for a{n equidimensional} local ring $R$ satisfying \autoref{set.NiceSetup}. Then \KosClos satisfies the following:
    \begin{enumerate}
        \item improved strong colon-capturing, version A, \label{thm.cc.versionA}
        \[
            (x_1^{t}, \dots, x_k)^{\Kos} : x_1^a \subseteq (x_1^{t-a}, \dots, x_k)^{\Kos},
        \]
        \item strong colon-capturing, version B, \label{thm.cc.versionB}
        \[
            (x_1, \dots, x_k)^{\Kos} : x_{k+1} \subseteq (x_1, \dots, x_{k})^{\Kos}, 
        \]
        and as a consequence
        \item colon-capturing.
    \end{enumerate}
\end{theorem}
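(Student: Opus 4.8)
The plan is to work with the \Cech/Koszul model of local cohomology developed in the previous subsections, applied to the locally Cohen-Macaulay complex $X := \myR\Gamma(\cO_Y)$ for a fixed regular alteration $\pi : Y \to \Spec R$. The key structural input is that, by equidimensionality and \autoref{lem.CMComplexFromResolution}, $X$ has maximal depth, so by \autoref{depth} we have $H_i(x_1,\dots,x_k; X) = 0$ for all $i \geq 1$ and all $1 \leq k \leq d$; equivalently, $x_1, \dots, x_k$ forms a regular sequence on $X$ in the derived sense. Thus $H_0(x_1,\dots,x_k; X) = X / (x_1,\dots,x_k)X$ in $D(R)$ behaves like the quotient of an honest module by a regular sequence, and the statements become manipulations inside the Koszul homology of $X$.

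For part \eqref{thm.cc.versionA}, I would argue as follows. By definition $(x_1^t, x_2, \dots, x_k)^{\Kos} : x_1^a$ is the set of $c \in R$ with $c x_1^a$ mapping to zero in $H_0(x_1^t, x_2, \dots, x_k; X)$. Since $x_1^t, x_2, \dots, x_k$ is a regular sequence on $X$ (maximal depth is insensitive to replacing parameters by powers), the usual colon computation in a Koszul homology module gives that the image of $c$ in $H_0(x_1^{t-a}, x_2, \dots, x_k; X)$ is killed by... more precisely, I want the exact sequence coming from the short exact sequence of Koszul complexes relating $K_{\mydot}(x_1^{t})$ and $K_{\mydot}(x_1^{t-a})$ after tensoring with $X/(x_2,\dots,x_k)X$. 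Concretely, set $\bar X := X/(x_2,\dots,x_k)X$ (a complex on which $x_1$ is still a nonzerodivisor on $H_0$ by the regular sequence property), and use that on any module $\bar M$ with $x_1$ a nonzerodivisor one has $(x_1^t \bar M) : x_1^a = x_1^{t-a}\bar M$; transporting the element $c$ back through $R \to H_0(\bar X)$ yields the claim. The only subtlety is that $H_0(\bar X)$ is a genuine module and $x_1$ acts as a nonzerodivisor on it precisely because $H_1(x_1, x_2, \dots, x_k; X) = 0$, which is \autoref{depth}; I would spell this reduction out carefully.

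For part \eqref{thm.cc.versionB}, let $c \in (x_1,\dots,x_k)^{\Kos} : x_{k+1}$, so $c x_{k+1}$ dies in $H_0(x_1,\dots,x_k; X) = H_0(\bar X)$ where $\bar X := X/(x_1,\dots,x_k)X$. I want to conclude $c$ itself dies there. Since $x_1,\dots,x_k,x_{k+1}$ is part of a system of parameters and $X$ has maximal depth, $x_{k+1}$ is a nonzerodivisor on $H_0(\bar X)$ (again this is the vanishing $H_1(x_1,\dots,x_k,x_{k+1};X)=0$ from \autoref{depth}, expressed via the long exact sequence of the pair $(\bar X \xrightarrow{x_{k+1}} \bar X)$). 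Hence $c x_{k+1} = 0$ in $H_0(\bar X)$ forces $c = 0$ there, i.e. $c \in (x_1,\dots,x_k)^{\Kos}$. Finally, part (3) follows formally: ordinary colon-capturing $(x_1,\dots,x_k) : x_{k+1} \subseteq (x_1,\dots,x_k)^{\Kos}$ is the special case of \eqref{thm.cc.versionA} with $t = 1$, $a = 0$ applied after a harmless reindexing (or directly: $(x_1,\dots,x_k):x_{k+1} \subseteq (x_1,\dots,x_k)^{\Kos}:x_{k+1} \subseteq (x_1,\dots,x_k)^{\Kos}$ by extensivity and \eqref{thm.cc.versionB}).

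The main obstacle I anticipate is bookkeeping the passage between the derived object $X$ and honest modules: one must be careful that "tensoring $X$ down by $x_2,\dots,x_k$" (or by $x_1,\dots,x_k$) produces a complex whose only nonvanishing homology in the relevant range is $H_0$, so that colon computations reduce to the module case, and that the maps $R \to H_0(\bar X)$ are compatible with all the Koszul boundary maps used. This is exactly where \autoref{depth} (via maximal depth of $X$, guaranteed by equidimensionality and \autoref{lem.CMComplexFromResolution}) does the heavy lifting, so the proof is really an exercise in organizing these vanishing statements rather than in proving anything genuinely new; the differential graded algebra structure is not needed here, only the depth-sensitivity of the Koszul complex.
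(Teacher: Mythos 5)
Your argument is correct and is essentially the paper's own proof: both parts hinge on the vanishing $H_i(\,\cdot\,;\myR\Gamma(\cO_Y))=0$ for $i\geq 1$ supplied by \autoref{depth} (via \autoref{lem.CMComplexFromResolution} and equidimensionality), used to show that multiplication by the relevant parameter power is injective on the $H_0$ of the auxiliary Koszul complex. The paper obtains the key injectivity for part (1) from the octahedral triangle $K_{\mydot}(x_1^{t-a})\to K_{\mydot}(x_1^t)\to K_{\mydot}(x_1^a)$ tensored down, whereas you rephrase the same fact as a nonzerodivisor and colon computation in the honest module $\bar M := H_0(x_2,\dots,x_k;\myR\Gamma(\cO_Y))$; this is an equivalent packaging, not a genuinely different route. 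One small slip: the ``$t=1$, $a=0$ after reindexing'' derivation of plain colon-capturing from version A does not actually go through (it only yields $(x_1,\dots,x_k)^{\Kos}\subseteq(x_1,\dots,x_k)^{\Kos}$), but your alternate derivation from version B together with extensivity is correct and is exactly what the paper does.
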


\begin{proof}
We first prove that \KosClos satisfies improved strong colon-capturing, version A. Fix a regular alteration $\pi\colon Y\to\Spec R$. Consider the following diagram
\[
\begin{tikzcd}
    R\arrow[r,"x_1^a"]\arrow[d]&R\arrow[d]\\
    H_0(x_1^{t-a},x_2,\dots,x_k;\myR \Gamma(\cO_Y))\arrow[r,"x_1^a"]&H_0(x_1^t,x_2,\dots,x_k;\myR \Gamma(\cO_Y)).
\end{tikzcd}
\]
Note that if we can show the bottom map, whose construction we explain below, is injective, we are done. To see this, take $r\in(x_1^t,x_2,\dots,x_k)^{\Kos}\colon x_1^a$. Then $rx_1^a=0\in H_0(x_1^t,x_2,\dots,x_k;\myR \Gamma(\cO_Y))$, so if the bottom map is injective we must have $r\in(x_1^{t-a},x_2,\dots,x_k)^{\Kos}$.

Now, to see that the bottom map is injective, we consider the following related diagram
\[
\begin{tikzcd}
    R\arrow[r]\arrow[d,swap,"x_1^{t-a}"]&R\arrow[d,"x_1^t"]\\
    R\arrow[r,swap,"x_1^a"]&R.
\end{tikzcd}
\]
By the definition of the Koszul complex and the octahedral axiom, we get the following exact triangle in $D(R)$
\[
\begin{tikzcd}
    K_{\mydot}(x_1^{t-a};R)\arrow[r]&K_{\mydot}(x_1^t;R)\arrow[r]&K_{\mydot}(x_1^a;R)\arrow[r]&\hspace{1mm}.
\end{tikzcd}
\]
Tensoring with $K_{\mydot}(x_2,\dots,x_k;\myR \Gamma(\cO_Y))$ we get the following exact triangle
\[
{
\begin{tikzcd}[column sep=small]
    K_{\mydot}(x_1^{t-a},x_2,\dots,x_k;\myR \Gamma(\cO_Y))\arrow[r]&K_{\mydot}(x_1^t,x_2,\dots,x_k;\myR \Gamma(\cO_Y))\arrow[r]&K_{\mydot}(x_1^a,x_2,\dots,x_k;\myR \Gamma(\cO_Y))\arrow[r]&\hspace{1mm}.
\end{tikzcd}}
\]
Then, using the fact that $H_i(x_1^a,x_2,\dots,x_k;\myR \Gamma(\cO_Y))=0$ for all $i\geq1$, we get that the map
\[
\begin{tikzcd}
    H_0(x_1^{t-a},x_2,\dots,x_k;\myR \Gamma(\cO_Y))\arrow[r,"x^a"]&H_0(x_1^t,x_2,\dots,x_k;\myR \Gamma(\cO_Y))
\end{tikzcd}
\]
is injective and we are done.

The proof that \KosClos satisfies strong colon-capturing, version B follows from the following diagram as in the proof of colon-capturing: 
\[
\begin{tikzcd}
    R\arrow[r,"x_{k+1}"]\arrow[d]&R\arrow[d]\\
    H_0(x_1,\dots,x_k;\myR \Gamma(\cO_Y))\arrow[r,"x_{k+1}"]&H_0(x_1,\dots,x_k;\myR \Gamma(\cO_Y))
\end{tikzcd}
\]
Taking $r\in(x_1,\dots,x_k)^{\Kos}\colon x_{k+1}$ we note that the image of $rx_{k+1}$ is zero in the bottom right of the diagram. The injectivity of the bottom map implies that $r\in(x_1,\dots,x_k)^{\Kos}$. Note that because \KosClos satisfies strong colon-capturing, version B, it also satisfies colon-capturing.
\end{proof}


\subsection{Rational singularities}

The proof of \autoref{thm.cc} suggests the following stronger result:

\begin{thm}
    \label{thm.InjectiveMapOnCohomologyKoszulToLocal}
    Let $(R,\fm)$ be a local ring and $x_1,\dots,x_d$ be a system of parameters for $R$ and let $X$ be a big Cohen-Macaulay $R$-complex. Then the natural map
    \[H_0(x_1,\dots,x_k;X)\cong H^k(x_1,\dots,x_k;X)\to H_{(x_1,\dots,x_k)}^k(X)\]
    is injective for all $k$.
\end{thm}
\begin{proof}
This largely follows from the proof of \autoref{thm.cc}. Note that if $x$ is a regular element, because $X$ is big Cohen-Macaulay, the natural map $H^1(x^k;X)\to H^1(x^{k+1};X)$ is injective by an argument similar to the proof of \autoref{thm.cc}. Since $X$ is big Cohen-Macaulay, $K^{\mydot}(x^\infty;X)\cong H^1_{(x)}(X)$ and the map
\[
H^1(x;X)\to H^1_{(x)}(X)\cong H^1(x^{\infty};X)
\]
is injective (as all the maps in the colimit are injective). More generally, we get that all the maps in the following sequence are injective
\[H^k(x_1^i,\dots,x_k^i;X)\to H^k(x_1^{i+1},\dots,x_k^i;X)\to\cdots\to H^k(x_1^{i+1},\dots,x_k^{i+1};X).\]
Indeed, these maps are the ones we proved were injective in the proof of \autoref{thm.cc} \autoref{thm.cc.versionA}.
Thus the map
\[H^k(x_1,\dots,x_k;X)\to H^k_{(x_1,\dots,x_k)}(X)\]
is injective for all $k$ and we are done.
\end{proof}

{%
We apply this now to the study of rational singularities.  First we get a description of \KosClos for parameter ideals.  Indeed, this is exactly what one might hope for in view of \cite{SmithFRatImpliesRat,HaraRatImpliesFRat}.

\begin{proposition}
    \label{prop.KHClosureOfFullParameterIdeal}
    Let $(R,\fm)$ be an {equidimensional} local ring satisfying \autoref{set.NiceSetup}.  If $x_1, \dots, x_d$ is a full system of parameters, then $(x_1, \dots, x_i)^{\Kos}$ is the kernel of the composition
    \begin{equation}
        \label{eq.prop.KHClosureOfFullParameterIdeal}
        R \xrightarrow{1 \mapsto \big[{1 + (x_1, \dots, x_i)}\big]} H^i_{(x_1, \dots, x_i)}(R) \to H^i_{(x_1, \dots, x_i)}(\myR\Gamma(\cO_Y)).
    \end{equation}
    In particular, $(x_1, \dots, x_d)^{\Kos}$ is the kernel of 
    $R \xrightarrow{1 \mapsto \big[{1 + (x_1, \dots, x_d)}\big]} H^i_{\fm}(\myR\Gamma(\cO_Y))$.
\end{proposition}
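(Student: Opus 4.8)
The plan is to obtain this as a direct corollary of \autoref{thm.InjectiveMapOnCohomologyKoszulToLocal}. Write $X := \myR\Gamma(\cO_Y)$. By \autoref{lem.CMComplexFromResolution} the complex $X$ is locally Cohen-Macaulay, hence (using that $R$ is local and equidimensional) a big Cohen-Macaulay $R$-complex, so \autoref{thm.InjectiveMapOnCohomologyKoszulToLocal} applies and tells us that for each $i$ the natural comparison map
\[
    H_0(x_1,\dots,x_i;X)\;\cong\;H^i(x_1,\dots,x_i;X)\longrightarrow H^i_{(x_1,\dots,x_i)}(X)
\]
is injective.

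By the very definition of \KosClos, $(x_1,\dots,x_i)^{\Kos} = \ker\big(R \to H_0(x_1,\dots,x_i;X)\big)$. First I would compose this structure map with the injection displayed above; since that injection has trivial kernel, the composite $R \to H^i(x_1,\dots,x_i;X) \hookrightarrow H^i_{(x_1,\dots,x_i)}(X)$ still has kernel $(x_1,\dots,x_i)^{\Kos}$. Next I would identify this composite with the one appearing in \autoref{eq.prop.KHClosureOfFullParameterIdeal}. This is just functoriality: applying $H^i(x_1,\dots,x_i;-)$ and then $H^i_{(x_1,\dots,x_i)}(-)$ to the unit map $R \to X$ yields a commutative square whose top edge is
\[
    R \twoheadrightarrow H^i(x_1,\dots,x_i;R) = R/(x_1,\dots,x_i) \longrightarrow H^i_{(x_1,\dots,x_i)}(R),
\]
and the last arrow sends $1 + (x_1,\dots,x_i)$ to the canonical class $[1 + (x_1,\dots,x_i)]$, using the colimit presentation of local cohomology via Koszul cochain complexes recalled before \autoref{lem.VanishingLocalCohomologyForIdealsContainingARegSeq} together with $H^i(x_1,\dots,x_i;R) \cong R/(x_1,\dots,x_i)$. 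Hence the composite is exactly the map of \autoref{eq.prop.KHClosureOfFullParameterIdeal}, which proves the first assertion. For the \emph{in particular}, take $i = d$: then $(x_1,\dots,x_d)$ is $\fm$-primary, so $H^d_{(x_1,\dots,x_d)}(-) = H^d_{\fm}(-)$ and the composition becomes $R \xrightarrow{\,1 \mapsto [1+(x_1,\dots,x_d)]\,} H^d_{\fm}(R) \to H^d_{\fm}(\myR\Gamma(\cO_Y))$.

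I do not anticipate a real obstacle here: essentially all the content sits in \autoref{thm.InjectiveMapOnCohomologyKoszulToLocal} (and ultimately in the big Cohen-Macaulay property of $\myR\Gamma(\cO_Y)$ together with \autoref{depth}). The one point to handle carefully is checking that the map $R \to H_0(x_1,\dots,x_i;X)$ coming from the defining commutative square of \KosClos really does factor through $H^i_{(x_1,\dots,x_i)}(R)$ as in \autoref{eq.prop.KHClosureOfFullParameterIdeal}; but this follows immediately from the functoriality diagram above once one observes that $1 \in R$ is carried to the canonical generator in both Koszul and local cohomology.
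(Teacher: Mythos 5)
Your proof is correct and takes essentially the same route as the paper: the paper factors the composition in \autoref{eq.prop.KHClosureOfFullParameterIdeal} through $H^0\big(K_{\mydot}({\bf x}; \myR\Gamma(\cO_Y))\big)$ and then argues, using the Cohen-Macaulay property of $\myR\Gamma(\cO_Y)$, that the colimiting map $\gamma$ into $\colim_k H^0\big(K_{\mydot}({\bf x}^k;\myR\Gamma(\cO_Y))\big)$ is injective, which is exactly the injectivity you invoke by citing \autoref{thm.InjectiveMapOnCohomologyKoszulToLocal}. One small remark worth making: you silently (and correctly) replace the superscript $d$ appearing in the statement by $i$; for $i<d$ the group $H^d_{(x_1,\dots,x_i)}(-)$ vanishes outright, so the statement as printed must be read with superscript $i = $ (number of generators) — and the \emph{in particular} clause at $i=d$ is the only place where $H^d$ is literal.
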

\begin{proof}
    Write ${\bf x} = x_1, \dots, x_i$.  Notice we can factor the composition \autoref{eq.prop.KHClosureOfFullParameterIdeal} as 
    \[
        R \to H^0\big(K_{\mydot}({\bf x}; \myR\Gamma(\cO_Y))\big) \xrightarrow{\gamma} \colim_k H^0\big(K_{\mydot}({\bf x}^k; \myR\Gamma(\cO_Y))\big) \cong H^i_{(x_1, \dots, x_i)}(\myR\Gamma(\cO_Y)).
    \]
    As $\myR\Gamma(\cO_Y)$ is Cohen-Macaulay, the colimiting maps are still injective and so $\gamma$ injects.  The result follows.
\end{proof}

It turns out that a single full parameter ideal being \KosClosed is a quite strong condition, just like for tight closure. 

\begin{lemma}[{\cf \cite[Theorem 4.3]{HochsterHunekeFRegularityTestElementsBaseChange}}]
    \label{lem.ParameterIdealObservationsFollowingHochsterHuneke}
    Suppose $(R, \fm)$ is a{n equidimensional} local ring satisfying \autoref{set.NiceSetup}.  Suppose $x_1, \dots, x_d$ is a full system of parameters.  Suppose $(x_1, \dots, x_d) = (x_1, \dots, x_d)^{\KH}$.  Then 
    \begin{enumerate}
        \item $(x_1, \dots, x_i) = (x_1, \dots, x_i)^{\KH}$ for all $1 \leq i \leq d$.  \label{lem.ParameterIdealObservationsFollowingHochsterHuneke.fullImpliesPartial}
        \item $R$ is Cohen-Macaulay. \label{lem.ParameterIdealObservationsFollowingHochsterHuneke.FullImpliesCM}
        \item For any $t > 0$, $(x_1^t, \dots,x_d^t) = (x_1^t, \dots,x_d^t)^{\KH}$. \label{lem.ParameterIdealObservationsFollowingHochsterHuneke.FullImpliesPowers}
        \item Any parameter ideal is \KosClosed.  \label{lem.ParameterIdealObservationsFollowingHochsterHuneke.AnyParamIsClosed}
        \item $R$ has rational singularities and hence all ideals are \KosClosed.\label{lem.ParameterIdealObservationsFollowingHochsterHuneke.RationalSingularities}
    \end{enumerate}
\end{lemma}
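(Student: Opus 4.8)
\emph{Overview of the strategy.} The plan is to run the Hochster--Huneke argument for $F$-rationality \cite{HochsterHunekeFRegularityTestElementsBaseChange} in this setting, using \KosClos in place of tight closure, the multiplier module $\mJ(\omega_R)$ in place of the parameter test module, and---where characteristic $0$ makes life easier---Kempf's criterion for rational singularities in place of the hard implication ``$F$-rational $\Rightarrow$ rational.'' Fix a regular alteration $\pi\colon Y\to\Spec R$ and set $X=\myR\Gamma(\cO_Y)$, a locally Cohen--Macaulay (hence, over the local ring $R$, big Cohen--Macaulay) $R$-complex by \autoref{lem.CMComplexFromResolution}. For part~\ref{lem.ParameterIdealObservationsFollowingHochsterHuneke.fullImpliesPartial} I would use descending induction on $i$, with base case $i=d$ the hypothesis: given $(x_1,\dots,x_{i+1})^{\KH}=(x_1,\dots,x_{i+1})$ and $r\in(x_1,\dots,x_i)^{\KH}$, order-preservation and the inductive hypothesis give $r\in(x_1,\dots,x_{i+1})$, say $r=s+c\,x_{i+1}$ with $s\in(x_1,\dots,x_i)$; then $c\,x_{i+1}=r-s\in(x_1,\dots,x_i)^{\KH}$, so $c\in(x_1,\dots,x_i)^{\KH}:x_{i+1}\subseteq(x_1,\dots,x_i)^{\KH}$ by strong colon capturing version~B (\autoref{thm.cc}). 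Repeating this on $c$ and its successors yields $r\in(x_1,\dots,x_i)+(x_{i+1}^{\,n})$ for all $n$, hence $r\in(x_1,\dots,x_i)$ by Krull's intersection theorem in $R/(x_1,\dots,x_i)$.

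\emph{Parts \ref{lem.ParameterIdealObservationsFollowingHochsterHuneke.FullImpliesCM} and \ref{lem.ParameterIdealObservationsFollowingHochsterHuneke.FullImpliesPowers}.} For part~\ref{lem.ParameterIdealObservationsFollowingHochsterHuneke.FullImpliesCM}, colon capturing (\autoref{thm.cc}) and part~\ref{lem.ParameterIdealObservationsFollowingHochsterHuneke.fullImpliesPartial} give $(x_1,\dots,x_i):x_{i+1}\subseteq(x_1,\dots,x_i)^{\KH}=(x_1,\dots,x_i)$ for each $i<d$, and since $R$ is reduced with $x_1$ in no minimal prime, $x_1$ is a nonzerodivisor; thus $x_1,\dots,x_d$ is a regular sequence and $R$ is Cohen--Macaulay. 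For part~\ref{lem.ParameterIdealObservationsFollowingHochsterHuneke.FullImpliesPowers} I would induct on $t_1+\dots+t_d$, the all-ones case being the hypothesis: now that $R$ is Cohen--Macaulay, $x_1,x_2^{t_2},\dots,x_d^{t_d}$ is again a system of parameters, so for $r\in(x_1^{t_1},\dots,x_d^{t_d})^{\KH}$ with, say, $t_1\ge2$, order-preservation and the inductive hypothesis let me write $r=a_1x_1^{t_1-1}+\sum_{j\ge2}a_jx_j^{t_j}$; then $a_1x_1^{t_1-1}\in(x_1^{t_1},\dots,x_d^{t_d})^{\KH}$, so improved strong colon capturing version~A (\autoref{thm.cc}, for the system of parameters $x_1,x_2^{t_2},\dots,x_d^{t_d}$) together with the inductive hypothesis gives $a_1\in(x_1,x_2^{t_2},\dots,x_d^{t_d})^{\KH}=(x_1,x_2^{t_2},\dots,x_d^{t_d})$, whence $r\in(x_1^{t_1},\dots,x_d^{t_d})$.

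\emph{Parts \ref{lem.ParameterIdealObservationsFollowingHochsterHuneke.RationalSingularities} and \ref{lem.ParameterIdealObservationsFollowingHochsterHuneke.AnyParamIsClosed}.} For part~\ref{lem.ParameterIdealObservationsFollowingHochsterHuneke.RationalSingularities} I would first reduce to the case $R$ complete, using \autoref{prop.FlatMapsWithRationalSingularitiesFibers} and the fact that Cohen--Macaulayness, rational singularities, and the hypothesis all descend along $R\to\widehat R$. By \autoref{prop.KHClosureOfFullParameterIdeal} and part~\ref{lem.ParameterIdealObservationsFollowingHochsterHuneke.FullImpliesPowers}, the natural map $R/(x_1^k,\dots,x_d^k)\to H^d_{\fm}(X)$ is injective for every $k$, hence so is the colimit map $\beta\colon H^d_{\fm}(R)\to H^d_{\fm}(X)$. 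Matlis dualizing, and using Grothendieck duality for $\pi$ \cite{HartshorneResidues} together with Grauert--Riemenschneider vanishing, identifies $H^d_{\fm}(X)^{\vee}\cong\RHom{R}(X,\omega_R)\cong\Gamma(\omega_Y)=\mJ(\omega_R)$ and $H^d_{\fm}(R)^{\vee}\cong\omega_R$, under which $\beta^{\vee}$ becomes the canonical inclusion $\mJ(\omega_R)\hookrightarrow\omega_R$; since $\beta$ injective forces $\beta^{\vee}$ surjective, we get $\mJ(\omega_R)=\omega_R$, and with part~\ref{lem.ParameterIdealObservationsFollowingHochsterHuneke.FullImpliesCM} Kempf's criterion yields that $R$ has rational singularities. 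Then $R\xrightarrow{\sim}X=\myR\Gamma(\cO_Y)$, so $J^{\KH}=\ker(R\to H_0(\mathbf{f};R))=\ker(R\to R/J)=J$ for every ideal $J$, which proves part~\ref{lem.ParameterIdealObservationsFollowingHochsterHuneke.AnyParamIsClosed}.

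\emph{Main obstacle.} I expect the crux to be part~\ref{lem.ParameterIdealObservationsFollowingHochsterHuneke.RationalSingularities}, and within it the precise identification of $\beta^{\vee}$ with the inclusion $\mJ(\omega_R)\hookrightarrow\omega_R$ via Grothendieck duality and Grauert--Riemenschneider vanishing: this is the step where the geometric content of the hypothesis is actually spent, everything in parts~\ref{lem.ParameterIdealObservationsFollowingHochsterHuneke.fullImpliesPartial}--\ref{lem.ParameterIdealObservationsFollowingHochsterHuneke.FullImpliesPowers} being formal consequences of colon capturing.
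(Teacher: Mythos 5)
Your proof is correct and recovers all five parts. Parts (1) and (2) track the paper closely: the only difference in (1) is that you close the argument by accumulating $r\in(x_1,\dots,x_i)+(x_{i+1}^n)$ and invoking Krull intersection, where the paper observes $J^{\KH}=J+x_{i+1}J^{\KH}$ and applies Nakayama to $J^{\KH}/J$ in one step. Both are fine; Nakayama is marginally shorter.

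The genuine divergences are in parts (3)--(5). For (3), the paper passes to the socle of $R/(x_1^t,\dots,x_d^t)$, uses Cohen--Macaulayness to write the socle generator as $x_1^{t-1}\cdots x_d^{t-1}u$, and compares the images of $u$ and $r$ in $H^d_\m(\myR\Gamma(\cO_Y))$. You instead run a strengthened induction on the total exponent, allowing mixed exponents $(t_1,\dots,t_d)$ and applying improved strong colon capturing version~A (\autoref{thm.cc}) to the new system of parameters $x_1,x_2^{t_2},\dots,x_d^{t_d}$. Your route is more purely ideal-theoretic: it never touches local cohomology, and in fact never uses part~(2) at all (a system of parameters raised to powers is automatically a system of parameters, so CM is not needed to set up the induction). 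The paper's socle argument is shorter but leans on CM and the local cohomology picture.

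For (4) and (5) you reverse the order. The paper proves (4) directly from the injection $H^d_\m(R)\hookrightarrow H^d_\m(\myR\Gamma(\cO_Y))$ plus CM, and then derives (5) by citing pseudo-rationality \`a la Lipman--Teissier. You prove (5) first by unwinding Matlis and Grothendieck duality to show $\beta^\vee$ is the inclusion $\mJ(\omega_R)\hookrightarrow\omega_R$, so injectivity of $\beta$ forces $\mJ(\omega_R)=\omega_R$, and Kempf's criterion together with part~(2) gives rational singularities; then (4) falls out since $R\to\myR\Gamma(\cO_Y)$ is an isomorphism. This is essentially the content of ``CM $+$ pseudo-rational $\Rightarrow$ rational'' made explicit, which is what the paper's citation outsources. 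Your preliminary reduction to the complete case for the duality bookkeeping is harmless (the paper avoids it by citing the pseudo-rationality literature directly), and one small notational slip--$H^d_\m(X)^\vee$ should be identified with $H^0\,\RHom{R}(X,\omega_R)$, not with $\RHom{R}(X,\omega_R)$ itself--does not affect the argument since GR vanishing and smoothness of $Y$ place $\myR\Gamma(\omega_Y)$ in degree zero anyway.
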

Much of the proof of this result is essentially identical to that of a portion of \cite[Theorem 4.3]{HochsterHunekeFRegularityTestElementsBaseChange} by formally replacing tight closure by \KosClos.  The key point is that \KosClos satisfies strong colon capturing and can be computed by maps to local cohomology, just like tight closure.  We include a careful proof for the convenience of the reader.  We thank Kyle Maddox for suggesting this question and for some valuable discussions.
\begin{proof}    
    Suppose $(x_1, \dots, x_{i+1})$ is a partial parameter ideal that is \KosClosed.  We wish to show that $J = (x_1, \dots, x_i)$ is also \KosClosed.  Suppose $u \in (x_1, \dots, x_i)^{\KH} \subseteq (x_1, \dots, x_{i+1})^{\KH} = (x_1, \dots, x_{i+1})$.  Thus we can write $u = v + x_{i+1}r$ for some $v \in (x_1, \dots, x_i)$ and $r \in R$.  Then $u-v \in (x_1, \dots, x_i)^{\KH}$ and so 
    \[ 
        r \in (x_1, \dots, x_i)^{\KH} : x_{i+1} = (x_1, \dots, x_i)^{\KH} = J^{\KH}
    \]    
    by \autoref{thm.cc} \autoref{thm.cc.versionB} (the other containment $\supseteq$ always holds).  Thus $u \in J + x_{i+1}J^{\KH}$ and so $J^{\KH} = J + x_{i+1}J^{\KH}$ which forces $u \in J$ by Nakayama's lemma.  This proves \autoref{lem.ParameterIdealObservationsFollowingHochsterHuneke.fullImpliesPartial} by descending induction.
    
    For \autoref{lem.ParameterIdealObservationsFollowingHochsterHuneke.FullImpliesCM}, if $(x_1, \dots, x_d)$ is \KosClosed, then by \autoref{lem.ParameterIdealObservationsFollowingHochsterHuneke.fullImpliesPartial} and colon capturing, $(x_1, \dots, x_i) : x_{i+1} = (x_1, \dots, x_i)$ for all $i$.  Hence $R$ is Cohen-Macaulay.

    Suppose $(x_1, \dots, x_d)$ is a full parameter ideal that is \KosClosed (and hence $R$ is Cohen-Macaulay).  We first show that $(x_1^t, \dots, x_d^t)$ is also \KosClosed.  Suppose it is not.  Then some element $r \in R$, mapping to the socle of $R/(x_1^t, \dots, x_d^t)$, must also be in $(x_1^t, \dots, x_d^t)^{\KH}$.  As $R$ is Cohen-Macaulay, we can write $r = x_1^{t-1} \dots x_d^{t-1} u$ for some $u \in R$ mapping into the socle of $R/(x_1, \dots, x_d)$.  As $\overline{u} \in R/(x_1, \dots, x_d)$ and $\overline{r} \in R/(x_1^t, \dots, x_d^t)$ map to the same place in $H^d_{\m}(\myR \Gamma(\cO_Y))$, we see that $u \in (x_1, \dots, x_d)^{\KH} = (x_1, \dots, x_d)$.  But then clearly $r \in (x_1^t, \dots, x_d^t)$.  This proves \autoref{lem.ParameterIdealObservationsFollowingHochsterHuneke.FullImpliesPowers}.  

    Next consider the map
    \[
        H^d_{\m}(R) = \colim_k H_0({\bf x}^k; R) \to  \colim_k H_0({\bf x}^k; \myR \Gamma(\cO_Y)) = H^d_{\m}(\myR\Gamma(\cO_Y)).
    \]
    Since each $(x_1^k, \dots, x_d^k)$ are \KosClosed, the individual maps $R/(x_1^k, \dots, x_d^k) \to H^d_{\m}(\myR\Gamma(\cO_Y))$ inject, and so we see that \begin{equation}
    \label{eq.InjectionOnLocalCohomologyAssumingSOPClosed}
        H^d_{\m}(R) \to H^d_{\m}(\myR\Gamma(\cO_Y))
    \end{equation}
    injects as well.  
    
    To prove \autoref{lem.ParameterIdealObservationsFollowingHochsterHuneke.AnyParamIsClosed}, suppose $(y_1, \dots, y_d)$ is another full parameter ideal.  As $R$ is Cohen-Macaulay $R/(y_1, \dots, y_d) \to H^d_{\m}(R)$ injects.  It follows that 
    \[
        R/(y_1, \dots, y_d) \to H^d_{\m}(\myR\Gamma(\cO_Y))
    \]
    injects.  Hence $(y_1, \dots, y_d)$ is \KosClosed.
    
    For the final statement, as $R$ is Cohen-Macaulay, we see that  \autoref{eq.InjectionOnLocalCohomologyAssumingSOPClosed}
    means that $R$ is pseudo-rational (see \cite{LipmanTeissierPseudoRational}) and hence $R$ has rational singularities.
\end{proof}

We immediately obtain the following corollary.
\begin{corollary}
    \label{cor.RatSingsVsKoszulClosedParameterIdeal}
    Let $(R,\fm)$ be a{n equidimensional} local ring satisfying \autoref{set.NiceSetup}. Then the following are equivalent.
    \begin{enumerate}
        \item $R$ has rational singularities.
        \item Every ideal of $R$ is \KosClosed.  \label{cor.RatSingsVsKoszulClosedParameterIdeal.allClosed}
        \item Some ideal generated by a full system of parameters of $R$ is \KosClosed.\label{cor.RatSingsVsKoszulClosedParameterIdeal.singleClosed}  
    \end{enumerate}  
\end{corollary}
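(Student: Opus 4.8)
The plan is to establish the cycle of implications $(1) \Rightarrow (2) \Rightarrow (3) \Rightarrow (1)$, assembling results already proved.

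For $(1) \Rightarrow (2)$, assume $R$ has rational singularities and fix a resolution of singularities $\pi : Y \to \Spec R$, so that the structure map $R \to \myR \Gamma(\cO_Y)$ is an isomorphism in $D(R)$. Then for any ideal $J = (\mathbf{f})$, tensoring with $K_{\mydot}(\mathbf{f}; R)$ and taking $H_0$ identifies the defining map $R \to H_0(\mathbf{f}; \myR \Gamma(\cO_Y))$ with the canonical surjection $R \to R/J$, whose kernel is $J$. Hence $J^{\Kos} = J$; this is exactly the observation recorded in the discussion preceding \autoref{prop.kosIndependence}, and \autoref{prop.kosIndependence} guarantees the identification is insensitive to the choices of generators and of regular alteration.

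The implication $(2) \Rightarrow (3)$ is immediate: since $R$ is Noetherian and local it admits a full system of parameters $x_1, \dots, x_d$, and by hypothesis $(x_1, \dots, x_d)$ is \KosClosed. Finally, $(3) \Rightarrow (1)$ is precisely \autoref{lem.ParameterIdealObservationsFollowingHochsterHuneke}\autoref{lem.ParameterIdealObservationsFollowingHochsterHuneke.RationalSingularities}, whose proof passes through strong colon capturing (\autoref{thm.cc}) and the description of \KosClos of parameter ideals via maps to local cohomology (\autoref{prop.KHClosureOfFullParameterIdeal}) to deduce first that $R$ is Cohen-Macaulay and then that $H^d_{\m}(R) \to H^d_{\m}(\myR \Gamma(\cO_Y))$ is injective, i.e.\ that $R$ is pseudo-rational.

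Since every implication is either recorded verbatim in the text or a direct citation of \autoref{lem.ParameterIdealObservationsFollowingHochsterHuneke}, I do not expect a real obstacle here; the only point requiring a moment's care is the identification in $(1) \Rightarrow (2)$, and the needed independence of choices is already supplied by \autoref{prop.kosIndependence}.
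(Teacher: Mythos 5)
Your proposal is correct and follows exactly the same cycle of implications $(1)\Rightarrow(2)\Rightarrow(3)\Rightarrow(1)$ as the paper, using the isomorphism $R \cong \myR\Gamma(\cO_Y)$ for $(1)\Rightarrow(2)$ and citing \autoref{lem.ParameterIdealObservationsFollowingHochsterHuneke} for $(3)\Rightarrow(1)$. The only difference is that you spell out the intermediate steps slightly more explicitly than the paper's terse version.
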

\begin{proof}
    If $R$ has rational singularities, $R \to \myR \Gamma(\cO_Y)$ is an isomorphism and so it easily follows that all ideals are \KosClosed.  That clearly implies that one full parameter ideal is \KosClosed.  Finally, the fact that \autoref{cor.RatSingsVsKoszulClosedParameterIdeal.singleClosed} implies rational singularities is simply \autoref{lem.ParameterIdealObservationsFollowingHochsterHuneke}.
\end{proof}
}
Combining \autoref{prop.FiniteExtensionComputationKHClosure} with \autoref{cor.RatSingsVsKoszulClosedParameterIdeal} yields the following result.

\begin{corollary}
    Suppose $R$ as in \autoref{set.NiceSetup} has normalization $R^{\mathrm{N}}$ with rational singularities (for example, if the normalization is nonsingular, which is automatic if $R$ is 1-dimensional).  Then for any ideal $I \subseteq R$ we have that $I^{\Kos} = I R^{\mathrm{N}} \cap R$.
\end{corollary}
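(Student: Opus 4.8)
The plan is to deduce this directly from \autoref{prop.FiniteExtensionComputationKHClosure} applied to the finite extension $R \subseteq S := R^{\mathrm{N}}$, together with the fact that a ring with rational singularities has every ideal \KosClosed.

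First I would check that $R^{\mathrm{N}}$ is a legitimate input for the cited results. Since $R$ is excellent and reduced, the normalization $R \to R^{\mathrm{N}}$ is module-finite; $R^{\mathrm{N}}$ is reduced (it is a finite product of normal domains), Noetherian, and excellent, and it inherits a dualizing complex from $R$ because it is module-finite over $R$. Thus $R^{\mathrm{N}}$ satisfies \autoref{set.NiceSetup}, and being normal, excellent, of characteristic zero with a dualizing complex, it is meaningful to speak of its (rational) singularities and to run the constructions of the previous sections over it. Moreover, because $R$ is reduced, $R^{\mathrm{N}}$ is the integral closure of $R$ in its total quotient ring $\prod_i R/\p_i$, the product taken over the minimal primes $\p_i$ of $R$, so $R^{\mathrm{N}} = \prod_i \overline{R/\p_i}$ and every minimal prime of $R^{\mathrm{N}}$ lies over a minimal prime of $R$. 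Hence the hypotheses of \autoref{prop.FiniteExtensionComputationKHClosure} hold, giving
\[
    I^{\Kos} = (I R^{\mathrm{N}})^{\Kos} \cap R .
\]

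Next, since $R^{\mathrm{N}}$ has rational singularities, for any resolution $\pi : Y \to \Spec R^{\mathrm{N}}$ the natural map $R^{\mathrm{N}} \to \myR\Gamma(\cO_Y)$ is an isomorphism in $D(R^{\mathrm{N}})$. Writing $I R^{\mathrm{N}} = (\mathbf{f})$, this forces $H_0(\mathbf{f}; \myR\Gamma(\cO_Y)) \cong H_0(\mathbf{f}; R^{\mathrm{N}}) = R^{\mathrm{N}}/I R^{\mathrm{N}}$, so $(I R^{\mathrm{N}})^{\Kos} = I R^{\mathrm{N}}$; one may also invoke \autoref{cor.RatSingsVsKoszulClosedParameterIdeal} after localizing at a prime (each localization of $R^{\mathrm{N}}$ being a normal, hence equidimensional, local domain). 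Substituting into the displayed equality yields $I^{\Kos} = I R^{\mathrm{N}} \cap R$. For the parenthetical assertions, if $R^{\mathrm{N}}$ is regular it trivially has rational singularities, and this is automatic when $\dim R \le 1$ since a one-dimensional normal Noetherian ring is a finite product of Dedekind domains.

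I do not anticipate a real obstacle here: the corollary is essentially an immediate combination of \autoref{prop.FiniteExtensionComputationKHClosure} with the behavior of \KosClos on rings with rational singularities. The only points demanding care are the standard commutative-algebra verifications indicated above, namely that normalization of $R$ is module-finite (using excellence), that $R^{\mathrm{N}}$ inherits reducedness and a dualizing complex so the closure is defined on it, and that the minimal primes of $R^{\mathrm{N}}$ contract to minimal primes of $R$ so that \autoref{prop.FiniteExtensionComputationKHClosure} genuinely applies.
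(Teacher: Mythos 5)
Your argument is correct and is essentially the paper's intended route: the paper records this corollary simply as a combination of \autoref{prop.FiniteExtensionComputationKHClosure} with \autoref{cor.RatSingsVsKoszulClosedParameterIdeal}, and you carry out exactly that (after checking, correctly, that $R^{\mathrm{N}}$ satisfies \autoref{set.NiceSetup} and that its minimal primes contract to minimal primes of $R$). Your observation that one can also conclude $(IR^{\mathrm{N}})^{\Kos}=IR^{\mathrm{N}}$ directly from the isomorphism $R^{\mathrm{N}}\to\myR\Gamma(\cO_Y)$ is a pleasant simplification, but it is not a genuinely different strategy.
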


In fact, we obtain the following stronger variant of \autoref{cor.RatSingsVsKoszulClosedParameterIdeal}.  

\begin{proposition}
    \label{prop.TestIdealForKosClosure}
    The \KosTestIdeal $\tau_{\Kos}(R) = \bigcap_{J \subseteq R} (J : J^{\Kos})$ contains the {\traceIdealComplex} of $\myR\Gamma(\cO_Y)$ in $R$ for any resolution of singularities or regular alteration $\pi : Y \to \Spec R$.  Furthermore, if $R$ is Cohen-Macaulay, then these two ideals are equal and also coincide with $\Ann_R(\omega_R / \Gamma( \omega_Y))$ where $\pi : Y \to \Spec R$ is a resolution of singularities.
\end{proposition}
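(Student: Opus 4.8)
Write $M := \myR\Gamma(\cO_Y)$. The plan is to prove the containment $\trcomp{M}(R)\subseteq\tau_{\Kos}(R)$ for an arbitrary regular alteration, and then, under the Cohen--Macaulay hypothesis, to identify $\trcomp{M}(R)$ with $\Ann_R(\omega_R/\Gamma(\omega_Y))$ via Grothendieck duality and squeeze $\tau_{\Kos}(R)$ between two copies of this ideal. For the first containment: by \autoref{lem.TraceOfComplexDGAVersion}, since $M$ is a dg $R$-algebra, each $c\in\trcomp{M}(R)$ has multiplication by $c$ on $R$ factoring in $D(R)$ as $R\xrightarrow{\eta}M\xrightarrow{\rho}R$ with $\eta$ the unit map (this is what the proof of that lemma actually produces). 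Applying $-\LTensor{R}K_{\mydot}({\bf f};R)$ — no derived correction is needed since $K_{\mydot}({\bf f};R)$ is a bounded complex of frees — and then $H_0$ gives $R$-linear maps $R/J\to H_0({\bf f};M)\to R/J$ whose composite is multiplication by $c$, the first of which is the map through which the defining map of $J^{\Kos}$ factors (cf.\ the proof of \autoref{prop.kosIndependence}). Hence if $x\in J^{\Kos}$ then $cx$ dies in $R/J$, i.e.\ $c\in(J:J^{\Kos})$; as $J$ is arbitrary, $c\in\tau_{\Kos}(R)$.

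Next, assume $R$ is Cohen--Macaulay and $\pi$ is a resolution of singularities. Both $\tau_{\Kos}$ and $\Ann_R(\omega_R/\Gamma(\omega_Y))$ are compatible with localization — the former because \KosClos commutes with localization by \autoref{prop.FlatMapsWithRationalSingularitiesFibers}, the latter because $\Gamma(\omega_Y)$ is the Grauert--Riemenschneider submodule and annihilators of finitely generated modules localize — so we may assume $(R,\fm)$ is local Cohen--Macaulay of dimension $d$ with canonical module $\omega_R$. Grauert--Riemenschneider vanishing \cite{GRVanishing,Murayama.VanishingForQSchemes} gives $\myR\pi_*\omega_Y=\Gamma(\omega_Y)$ concentrated in degree $0$, and Grothendieck duality then furnishes an isomorphism $\RHom{R}(M,\omega_R)\simeq\Gamma(\omega_Y)$ under which the unit $R\to M$ is dual to the canonical inclusion $\Gamma(\omega_Y)\hookrightarrow\omega_R$. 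Since $\RHom{R}(-,\omega_R)$ is an anti-equivalence on $D^{\mathrm b}_{\mathrm{fg}}(R)$ carrying multiplication by $c$ to multiplication by $c$, dualizing the description of $\trcomp{M}(R)$ from the first paragraph shows $c\in\trcomp{M}(R)$ if and only if multiplication by $c$ on $\omega_R$ factors through $\Gamma(\omega_Y)\hookrightarrow\omega_R$, i.e.\ if and only if $c\,\omega_R\subseteq\Gamma(\omega_Y)$. Thus $\trcomp{M}(R)=\Ann_R(\omega_R/\Gamma(\omega_Y))$, and with the first paragraph, $\Ann_R(\omega_R/\Gamma(\omega_Y))\subseteq\tau_{\Kos}(R)$.

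For the reverse inclusion I would use \autoref{prop.KHClosureOfFullParameterIdeal}. Fix one system of parameters ${\bf x}=x_1,\dots,x_d$. Then $({\bf x}^k)^{\Kos}$ is the kernel of $R\to H^d_\fm(R)\to H^d_\fm(M)$, and since $R$ is Cohen--Macaulay the map $R/({\bf x}^k)\hookrightarrow H^d_\fm(R)$ is injective, so $({\bf x}^k)^{\Kos}/({\bf x}^k)=(R/({\bf x}^k))\cap\ker(H^d_\fm(R)\to H^d_\fm(M))$ as submodules of $H^d_\fm(R)$. Passing to the colimit over $k$ (with transition maps multiplication by $x_1\cdots x_d$) identifies the increasing union of these submodules with $\ker(H^d_\fm(R)\to H^d_\fm(M))$; applying local duality to $\RHom{R}(M,\omega_R)=\Gamma(\omega_Y)$ (so that $H^i_\fm(M)=0$ for $i\neq d$ and $H^d_\fm(M)$ is the Matlis dual of $\Gamma(\omega_Y)$) shows this kernel is the Matlis dual of $\omega_R/\Gamma(\omega_Y)$. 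Since a finitely generated module and its Matlis dual have the same annihilator, $\tau_{\Kos}(R)\subseteq\bigcap_{k>0}\big(({\bf x}^k):({\bf x}^k)^{\Kos}\big)=\Ann_R(\omega_R/\Gamma(\omega_Y))$, so in the Cohen--Macaulay case all three ideals coincide.

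The step I expect to be the main obstacle is the duality bookkeeping in the second paragraph: verifying that $\RHom{R}(\myR\Gamma(\cO_Y),\omega_R)$ is concentrated in degree zero and equals the Grauert--Riemenschneider submodule $\Gamma(\omega_Y)\subseteq\omega_R$, and — the genuinely subtle point — that under Grothendieck duality the unit $R\to\myR\Gamma(\cO_Y)$ corresponds to the canonical inclusion (trace map) $\Gamma(\omega_Y)\hookrightarrow\omega_R$ rather than to some other embedding of the GR submodule into the canonical module. Everything else amounts to chasing the definitions together with \autoref{lem.TraceOfComplexDGAVersion}, \autoref{prop.KHClosureOfFullParameterIdeal}, and standard Matlis duality.
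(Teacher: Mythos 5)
Your proof is correct and follows essentially the same strategy as the paper's: show $\trcomp{\myR\Gamma(\cO_Y)}(R)\subseteq\tau_{\Kos}(R)$ via the factorization of multiplication-by-$c$ through $\myR\Gamma(\cO_Y)$ and the Koszul complex, and in the Cohen--Macaulay case use the local-cohomology description of $({\bf x}^k)^{\Kos}$ together with Matlis/Grothendieck duality to get the reverse inclusion and the identification with $\Ann_R(\omega_R/\Gamma(\omega_Y))$. Your organization is slightly cleaner than the paper's in two respects: you directly identify $\trcomp{\myR\Gamma(\cO_Y)}(R)=\Ann_R(\omega_R/\Gamma(\omega_Y))$ by GR vanishing and duality in one step (the paper obtains this a bit indirectly, concluding with an informal ``running the argument in reverse''), and you carry out the colimit-over-$k$ argument explicitly rather than working one class $\eta\in H^d_{\fm}(R)$ at a time. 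A small bookkeeping note: for the fact that $h\in({\bf x}^k)^{\Kos}$ iff $[h/(x_1^k\cdots x_d^k)]$ dies in $H^d_\fm(\myR\Gamma(\cO_Y))$, you correctly cite \autoref{prop.KHClosureOfFullParameterIdeal}; the paper's text cites \autoref{cor.RatSingsVsKoszulClosedParameterIdeal} at the analogous point, which appears to be a slip. Also, your reduction to the local case is fine, but the inclusion $\tau_{\Kos}(R)_P\subseteq\tau_{\Kos}(R_P)$ you implicitly use deserves a one-line justification (infinite intersections only localize in one direction; here the containment goes the right way because colon ideals with finitely generated numerators localize and every ideal of $R_P$ is extended).
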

\begin{proof}
Pick $g$ in the \traceIdealComplex of $\myR \Gamma(\cO_Y)$.
    By \autoref{lem.TraceOfComplexDGAVersion}, this implies that there is a map $\gamma : \myR\Gamma(\cO_Y) \to R$ such that the composition $R \to \myR\Gamma(\cO_Y) \to R$ is multiplication by $g$. 
    We will prove that $g J^{\Kos} \subseteq J$ for any ideal $J$ of $R$.  Fix $J = ({\bf f}) = (f_1, \dots, f_n)$.  Consider the factorization:
    \[
        R \to K_{\mydot}({\bf f}; R) \to K_{\mydot}({\bf f}; \myR\Gamma(\cO_Y)) \xrightarrow{\gamma} K_{\mydot}({\bf f}; R).
    \]
    Taking $H^0$, the kernel of the composition becomes $J : g$ (as $H^0(K_{\mydot}({\bf f}; R)) = R/J$).  Hence we see that $J^{\Kos} \subseteq J : g$, or in other words that $g \in J : J^{\Kos}$ as desired.

    For the second statement, suppose $R$ is Cohen-Macaulay,  $g \in \tau_{\Kos}(R)$, and $x_1, \dots, x_d \in R$ is a full system of parameters with $J_k = (x_1^k, \dots, x_d^k)$.  Then $g J_k^{\Kos} \subseteq J_k$ for all $k$.  Consider some $\eta = [h + (x_1^k, \dots, x_d^k)] \in H^d_{\fm}(R)$ mapping to zero in $H^d_{\fm}(\myR\Gamma(\cO_Y))$ so that $h \in J_K^{\Kos}$ by \autoref{cor.RatSingsVsKoszulClosedParameterIdeal}. 
    Therefore, $gh \in (x_1^k, \dots, x_d^k)$ which implies that $g\eta = 0$.  Thus we have 
    \[
        0 = g \ker\big(H^d_{\fm}(R) \to H^d_{\fm}(\myR\Gamma(\cO_Y))\big).
    \]
    By Matlis and Grothendieck duality, we obtain that $g (\omega_R / \Gamma(\omega_Y)) = 0$, or in other words that $g \omega_R \subseteq \Gamma(\omega_Y)$.  But this means that we can factor the multiplication-by-$g$-map on $\omega_R$ as 
    \[
        \omega_R \to \Gamma(\omega_Y) \to \omega_R.
    \]
    Taking Grothendieck duality and using that $R$ is Cohen-Macaulay, we obtain a composition
    \[
        R \leftarrow \myR\Gamma(\cO_Y) \leftarrow R 
    \]
    where the composition is again multiplication by $g$.  This proves that $g$ is in the {\traceIdealComplex} of $\myR\Gamma(\cO_Y)$ via \autoref{lem.TraceOfComplexDGAVersion} as desired.
    
    For the final statement, the argument above shows that $\Ann_R(\omega_R/\Gamma(\omega_Y))$ is contained in the {\traceIdealComplex} of $\myR\Gamma(\cO_Y)$. Given an element of the {\traceIdealComplex} of $\myR\Gamma(\cO_Y)$, running the argument in reverse shows that the reverse containment holds. 
\end{proof}

This immediately tells us what the $\Kos$-test ideal reduces to modulo $p \gg 0$ in a Cohen-Macaulay ring.

\begin{corollary}
    \label{cor.KHTestIdealReducesToParameterTestIdeal}
    If $R$ is Cohen-Macaulay and essentially of finite type over a field of characteristic zero, then the \KosTestIdeal reduces modulo $p \gg 0$ to the parameter test ideal.
\end{corollary}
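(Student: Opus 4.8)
The plan is to combine \autoref{prop.TestIdealForKosClosure} with \autoref{prop.JOmegaReducesToTauOmega}. Since $R$ is Cohen-Macaulay, \autoref{prop.TestIdealForKosClosure} identifies
\[
    \tau_{\Kos}(R) = \Ann_R\big(\omega_R/\Gamma(\omega_Y)\big) = \Ann_R\big(\omega_R/\mJ(\omega_R)\big)
\]
for $\pi : Y \to \Spec R$ a resolution of singularities. (Here $R$ is reduced by our standing hypotheses and, being Cohen-Macaulay, is locally equidimensional, so \autoref{prop.JOmegaReducesToTauOmega} applies.) On the characteristic-$p$ side, recall that for a Cohen-Macaulay local ring $(S,\fn)$ of dimension $d$, Matlis-dualizing the inclusion $0^*_{H^d_\fn(S)} \subseteq H^d_\fn(S)$ yields a surjection $\omega_S \twoheadrightarrow (0^*_{H^d_\fn(S)})^{\vee}$ whose kernel is the parameter test module $\tau(\omega_S)$ (cf.\ the discussion after \autoref{prop.JOmegaReducesToTauOmega}); hence the parameter test ideal satisfies $\tau_{\mathrm{par}}(S) = \Ann_S(0^*_{H^d_\fn(S)}) = \Ann_S(\omega_S/\tau(\omega_S))$, and this globalizes to rings essentially of finite type over a field. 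So it suffices to show that the formation of $\Ann_R(\omega_R/\mJ(\omega_R))$ commutes with reduction modulo $p \gg 0$, carrying it to $\Ann_{R_p}(\omega_{R_p}/\tau(\omega_{R_p}))$.

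First I would fix a finitely generated $\bZ$-subalgebra $A$ of the ground field as in \autoref{subsec.ReductionModP} over which $R$, a resolution of singularities $\pi : Y \to \Spec R$, the canonical module $\omega_R$, and the inclusion $\mJ(\omega_R) = \Gamma(\omega_Y) \hookrightarrow \omega_R$ all spread out; after shrinking the dense open $U \subseteq \m\text{-}\Spec A$, we may assume that for every $\mathfrak{t} \in U$: the fiber $R_{\mathfrak{t}}$ is Cohen-Macaulay with canonical module $(\omega_R)_{\mathfrak{t}}$; the reduction $(\Gamma(\omega_Y))_{\mathfrak{t}}$ equals $\tau(\omega_{R_{\mathfrak{t}}})$; and the short exact sequence $0 \to \mJ(\omega_R) \to \omega_R \to \omega_R/\mJ(\omega_R) \to 0$ stays exact after base change, so that $(\omega_R/\mJ(\omega_R))_{\mathfrak{t}} = \omega_{R_{\mathfrak{t}}}/\tau(\omega_{R_{\mathfrak{t}}})$. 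The key identification $(\Gamma(\omega_Y))_{\mathfrak{t}} = \tau(\omega_{R_{\mathfrak{t}}})$ on a dense open is exactly \autoref{prop.JOmegaReducesToTauOmega}; the remaining compatibilities are the standard persistence of Cohen-Macaulayness, of canonical modules, and of exactness of finite presentations under reduction modulo $p \gg 0$, see \cite{HochsterHunekeTightClosureInEqualCharactersticZero} and \cite[Chapter 6]{SchwedeSmith.FBook}.

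Finally, I would observe that the annihilator of a finitely generated module commutes with reduction modulo $p \gg 0$: writing $M := \omega_R/\mJ(\omega_R)$ with generators $m_1, \dots, m_b$, the ideal $\Ann_R M$ is the kernel of the map $R \to M^{\oplus b}$, $r \mapsto (rm_1, \dots, rm_b)$, and after spreading this map out over $A$ and shrinking $U$ once more so that this map and its kernel remain exact under base change to each $\mathfrak{t} \in U$, we get $(\Ann_R M)_{\mathfrak{t}} = \Ann_{R_{\mathfrak{t}}}(M_{\mathfrak{t}})$. Combining the displays, $\tau_{\Kos}(R)$ reduces modulo $p \gg 0$ to $\Ann_{R_p}(\omega_{R_p}/\tau(\omega_{R_p})) = \tau_{\mathrm{par}}(R_p)$, the parameter test ideal, as claimed.

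The only step requiring genuine care is the bookkeeping needed to arrange a \emph{single} dense open $U$ simultaneously witnessing all of these compatibilities; each individual fact is standard, and no new positive-characteristic ingredient beyond \autoref{prop.JOmegaReducesToTauOmega} (Hara's comparison of the Grauert--Riemenschneider module with the parameter test module) is required.
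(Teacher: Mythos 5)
Your proposal is correct and follows the same route the paper takes: the paper's proof is a one-liner (``reduce to the finite type case; the result follows from \autoref{prop.JOmegaReducesToTauOmega}'') that implicitly relies on the identification $\tau_{\Kos}(R) = \Ann_R(\omega_R/\mJ(\omega_R))$ from \autoref{prop.TestIdealForKosClosure} and the dual characterization of the parameter test ideal, exactly as you spell out. You have simply filled in the standard spread-out-and-shrink-$U$ bookkeeping that the authors leave to the reader.
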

\begin{proof}
 We may reduce to the finite type case.  The result follows from \autoref{prop.JOmegaReducesToTauOmega}.
\end{proof}

We can also say something more precise about how the closure operations themselves reduce modulo $p \gg 0$.

\begin{proposition}
\label{prop.KHContainedInPlusModP}
    Suppose $k$ is a field of characteristic zero and $R$ is a finitely generated $k$-algebra.  Fix $J \subseteq R$ an ideal.  Let $R_t$ and $J_t$, $(J^{\Kos})_t$, for $t \in \mathrm{m}-\Spec A$ denote a family of reduction-to-characteristic $p > 0$ models of $R$, $J$ and $(J^{\Kos})_t$ respectively.  Then 
    \[
        (J^{\Kos})_t \subseteq (J_t)^+
    \]
    for a Zariski-dense open set of $t$ in $\mathrm{m}\text{-}\Spec A$. 
\end{proposition}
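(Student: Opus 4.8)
The plan is to realize the KH closure, after reduction modulo $p$, as computed by a specific alteration whose structure sheaf pushes forward to something dominated (in the derived sense) by the absolute integral closure $R_t^+$, and then to use $R_t^+$ (or a large enough module-finite extension) as a big Cohen-Macaulay algebra witnessing membership in plus closure. First I would set up the reduction-mod-$p$ machinery as in \autoref{subsec.ReductionModP}: fix a finitely generated $\bZ$-algebra $A \subseteq k$ together with models $R_A$, $J_A = ({\bf f}_A)$, and crucially a model $\pi_A : Y_A \to \Spec R_A$ of a fixed resolution of singularities $\pi : Y \to \Spec R$. Shrinking $\Spec A$, we may assume $Y_t \to \Spec R_t$ is a proper surjective generically finite map from a regular scheme for all $t$ in a dense open set, and that the formation of $\myR\Gamma(\cO_{Y_t})$ and of the Koszul homology $H_0({\bf f}_t; \myR\Gamma(\cO_{Y_t}))$ commutes with base change (generic flatness / cohomology-and-base-change applied to a bounded complex of coherent sheaves). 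Then by persistence-type arguments $(J^{\Kos})_t \subseteq \ker\big(R_t \to H_0({\bf f}_t; \myR\Gamma(\cO_{Y_t}))\big)$, with equality for $t$ generic; in any case it suffices to show the right-hand side lies in $(J_t)^+$.

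Next I would invoke the key characteristic-$p$ input: by Bhatt's theorem that $R_t^+$ is a (balanced) big Cohen-Macaulay $R_t$-algebra (indeed $\myR\Gamma(\cO_{Y_t}) \otimes^{\myL}_{R_t} R_t^+$ has vanishing higher cohomology, or more directly $R_t \to R_t^+$ is a derived splinter in the relevant sense), the proper surjective map $Y_t \to \Spec R_t$ factors, in the derived category, through $R_t^+$: concretely, $\cO_{Y_t} \to \myR\pi_{t*}\cO_{Y_t}$ admits a compatible map to $R_t^+$ because $R_t^+$ absorbs all finite covers, so there is a map $\myR\Gamma(\cO_{Y_t}) \to R_t^+$ in $D(R_t)$ compatible with the structure maps from $R_t$. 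Therefore we get a commuting diagram $R_t \to \myR\Gamma(\cO_{Y_t}) \to R_t^+$, and applying $K_{\mydot}({\bf f}_t; -)$ and taking $H_0$ yields $R_t \to H_0({\bf f}_t; \myR\Gamma(\cO_{Y_t})) \to H_0({\bf f}_t; R_t^+) = R_t^+/J_t R_t^+$. Hence $\ker\big(R_t \to H_0({\bf f}_t; \myR\Gamma(\cO_{Y_t}))\big) \subseteq \ker\big(R_t \to R_t^+/J_t R_t^+\big) = J_t R_t^+ \cap R_t$, and an element of $J_t R_t^+$ already lies in $J_t S$ for some module-finite domain extension $R_t \subseteq S \subseteq R_t^+$, which is exactly membership in $(J_t)^+$.

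The main obstacle — and the step requiring the most care — is producing the factorization $\myR\Gamma(\cO_{Y_t}) \to R_t^+$ in the derived category, compatibly with the map from $R_t$, uniformly for $t$ in a dense open set. This is essentially the derived-splinter property of $R_t^+$ (Bhatt, \cite{BhattAbsoluteIntegralClosure}): every proper surjective map to $\Spec R_t$ splits after base change to $R_t^+$, hence $\myR\Gamma(\cO_{Y_t})$ receives the structure map from $R_t$ and maps to $R_t^+$ with the composite being the canonical map $R_t \to R_t^+$. One must check this can be arranged model-theoretically — i.e. that the dense open locus in $\mathrm{m}\text{-}\Spec A$ on which $Y_t$ is regular and the base-change statements hold is the same locus we need — but this is routine spreading-out once the single characteristic-$p$ statement is in hand. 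An alternative, perhaps cleaner, route to the same conclusion is to cite \autoref{prop.TCIsContainedInCaltChar0} together with the comparison of $\alt$-closure and tight closure after reduction, but the direct argument via $R_t^+$ as a big Cohen-Macaulay algebra seems most transparent and gives the stronger "plus closure" statement directly.
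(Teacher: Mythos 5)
Your proof is correct and takes essentially the same route as the paper: spread out the resolution $\pi$ to characteristic $p$, use Bhatt's derived splinter theorem to factor $R_t \to R_t^+$ through $\myR\Gamma(\cO_{Y_t})$, apply the Koszul functor, and observe that the kernel of $R_t \to H_0({\bf f}_t; R_t^+) = R_t^+/J_tR_t^+$ is exactly $J_t^+$. One small point: the derived-splinter factorization you invoke is from \cite{BhattDerivedDirectSummandCharP} rather than \cite{BhattAbsoluteIntegralClosure}, which is the reference the paper uses for this step.
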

\begin{proof}
    Enlarging $A$ if necessary, we may assume that a resolution of singularities $\pi : Y \to \Spec R$ is also reduced to characteristic $p$ to become $\pi_t : Y_t \to \Spec R_t$.  Thanks to \cite{BhattDerivedDirectSummandCharP}, $R_t \to (R_t)^+$ factors through $R_t \to \myR \Gamma( \cO_{Y_t})$.  Now, fix $J = (f_1, \dots, f_n)$.  As $H^0 K_{\mydot}({\bf f}; R_t^+) = R_t^+/({\bf f})$, we have that 
    \[ 
        \ker \Big( H^0 K_{\mydot}({\bf f}; R_t) \to H^0 K_{\mydot}({\bf f}; R_t^+) \Big) = J_t^+.
    \]
    Furthermore, the datum of $K_{\mydot}({\bf f}; R) \to K_{\mydot}({\bf f}; \myR\Gamma(\cO_Y))$ can easily be seen to be reduced to characteristic $p > 0$ and the result follows.
\end{proof}

We will see later in \autoref{sec.ComputationsInM2} that in fact $J^{\KH}$ is \emph{strictly} tighter than tight closure in {equal} characteristic zero or plus closure.

\begin{remark}
\label{rem.KoszulClosureInCharp}
    In any characteristic, and for any fixed alteration (regular or not) $\pi : Y \to \Spec R$, one can define a closure operation for $I = ({\bf{f}}) \subseteq R$ by 
    \[
        I^{\mathrm{K}\pi} = \ker \Big(R \to K_{\mydot}({\bf f}; \myR\Gamma(\cO_Y)) \Big)
    \]
    It need not satisfy properties like colon capturing of course as $\myR\Gamma(\cO_Y)$ need not be locally Cohen-Macaulay in positive or mixed characteristic.  Regardless, in characteristic $p > 0$, the argument of \autoref{prop.KHContainedInPlusModP} proves $ I^{\mathrm{K}\pi} \subseteq I^+$.  As any finite extension is itself an alteration, we immediately see that 
    \[
        I^+ = \sum_{\pi} I^{\mathrm{K}\pi}.
    \]
\end{remark}


\begin{remark}
    Schoutens proved in \cite[Theorem 10.4]{Schoutens.NonStandardTightClosureForAffineCAlgebras} that (equational) tight closure in equal characteristic zero is contained in all of his ultraproduct closures, in particular generic and non-standard tight closure. As a consequence, since \KosClos\ is contained in the tight closure in {equal} characteristic zero, it is also contained in generic and non-standard tight closure.

    Similarly, \KosClos\ is contained in parasolid closure \cite[Corollary 9.9]{BrennerHowToRescueSolidClosure}.    
\end{remark}

{%
\begin{remark}
    Another interesting closure can be constructed as follows.  For simplicity, suppose that $R$ is a domain finite type over $\bQ$.  Spread $R$ out to $R_{\bZ}$, a finitely generated $\bZ$-algebra domain such that $R_{\bZ} \otimes_{\bZ} \bQ = R$ as if one is beginning the reduction-to-characteristic-$p$ process.  Now fix a prime $p$ and consider $R_{\bZ_{(p)}} := R_{\bZ} \otimes_{\bZ} \bZ_{(p)}$ where $\bZ_{(p)}$ is the localization at the prime ideal $(p)$ (ie, a DVR).  We know that the $p$-adic completion $\widehat{R^+_{(p)}}$ is a balanced big Cohen-Macaulay $R$-algebra by \cite{BhattAbsoluteIntegralClosure}, and so, after inverting $p$, it can be used to create an interesting closure operation on $R$, $J \mapsto J\widehat{R^+_{(p)}}[1/p] \cap R$.  This is closely related to, but in principal slightly smaller than, doing Heitmann's full extended plus closure in mixed characteristic and then inverting $p > 0$, \cite{heitmannepf}.

    Fix $X_{(p)} \to \Spec R_{\bZ_{(p)}}$ a blowup providing a resolution of singularities after inverting $p$ (ie, inducing a resolution $X \to \Spec R$).  We have a factorization $R_{(p)} \to \myR\Gamma(\cO_{X_{(p)}}) \to \myR\Gamma(X^+, \cO_{X^+_{(p)}})$.  
    By \cite[Theorem 3.12]{BhattAbsoluteIntegralClosure}, we see $\widehat{R_{(p)}^+}/p \cong \myR\Gamma(X^+, \cO_{X^+_{(p)}})/p$  (the modulo $p$ on the right means in the derived sense, ie, we are tensoring with the Koszul complex on $p$).   
    But then by derived Nakayama we see that $\widehat{R_{(p)}^+}$ and $\myR\Gamma(X^+, \cO_{X^+_{(p)}})$ agree up to derived completion.  
    Hence $R_{(p)} \to \widehat{R_{(p)}^+}$ factors through $\myR \Gamma(\cO_{X_{(p)}})$.  
    Inverting $p$, we have a factorization 
    \[
        R \to \myR \Gamma(\cO_X) \to \widehat{R^+}[1/p].
    \]
    Hence $J^{\KH} \subseteq J\widehat{R^+_{(p)}}[1/p] \cap R$.  As a consequence, we easily see that $J_{(p)} \subseteq R_{(p)}$ is a model for $J$ is in mixed characteristic, then $J^{\KH} \subseteq J_{(p)}^{\mathrm{epf}} \otimes_{\bZ} \bQ$ where $\mathrm{epf}$ denotes full extended plus closure, see \cite{heitmannepf}.
\end{remark}
}


\subsection{The Brian\c{c}on-Skoda property}

In \cite{MurayamaUniformBoundsOnSymbolicPowers}, Murayama formalized the Brian\c{c}on-Skoda property for closure operations.  Namely, we say that $\cl$ has the \emph{Brian\c{c}on-Skoda property} if for every $n$-generated ideal $J \subseteq R$ and for every integer $k \geq 0$ we have that 
\[
    \overline{J^{n+k-1}} \subseteq (J^{k})^{\cl}.
\]

{
When $k = 1$, it follows from \cite{MaMcDonaldRGSchwede.BSForPseudorRational} that this holds for \KosClos.  
We give an alterate proof of this (which preceeded {\it loc. cit.} historically).   }
The proof strategy mimics parts of \cite{LipmanTeissierPseudoRational}, \cite{HochsterHunekeApplicationsofBigCM} and \cite{RodriguezVillalobosSchwede.BrianconSkodaProperty}.  First we handle the case of a system of parameters.

\begin{theorem}
    \label{thm.SkodaForParameterKos}
    Suppose $R$ is a domain satisfying \autoref{set.NiceSetup} and $J \subseteq R$ is an ideal generated by a partial system of parameters, $J = (f_1, \dots, f_n)$.  Then 
    \[
        \overline{J^{n}} \subseteq J^{\Kos}.
    \]
\end{theorem}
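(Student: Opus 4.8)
The plan is to exhibit multiplication by a given $u\in\overline{J^{n}}$ on $K_{\mydot}(\mathbf{f};\myR \Gamma(\cO_Y))$ as a map that factors through an \emph{exact} complex on a resolution, and therefore vanishes in $D(R)$; this is the strategy of \cite{LipmanTeissierPseudoRational,HochsterHunekeApplicationsofBigCM,RodriguezVillalobosSchwede.BrianconSkodaProperty} transported to \KosClos. First I would reduce to the case that $(R,\fm)$ is local: both $\overline{J^{n}}$ and $J^{\Kos}$ commute with localization (the latter by \autoref{prop.FlatMapsWithRationalSingularitiesFibers}), and there is nothing to prove at primes not containing $J$. Fix a resolution of singularities $\pi\colon Y\to\Spec R$ with $J\cO_Y=\cO_Y(-G)$ an invertible ideal sheaf (first blow up $J$, then resolve), and put $X:=\myR \Gamma(\cO_Y)$, which is locally Cohen--Macaulay by \autoref{lem.CMComplexFromResolution}. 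Since $J^{\Kos}=\ker\bigl(R\to H_0(\mathbf{f};X)\bigr)$ by definition, it suffices to show multiplication by $u$ is zero on $H_0(\mathbf{f};X)$; I will in fact show it is zero on $K_{\mydot}(\mathbf{f};X)$ in $D(R)$.

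The geometric heart is an exact complex on $Y$. Because $f_1,\dots,f_n$ generate $\cO_Y(-G)=J\cO_Y$, the map $\cO_Y^{n}\to\cO_Y(-G)$ is surjective, i.e.\ $(f_1,\dots,f_n)$ is a nowhere-vanishing section of $\cO_Y(-G)^{\oplus n}$ (locally, after trivializing $\cO_Y(-G)$, it becomes a sequence generating the unit ideal), so the Koszul complex of this section,
\[
    \mathcal K_{\mydot}\colon\qquad 0\to {\textstyle\bigwedge^{n}}\cO_Y^{n}\otimes\cO_Y(nG)\to\cdots\to\cO_Y^{n}\otimes\cO_Y(G)\to\cO_Y\to 0,
\]
with differential given by contraction against $(f_1,\dots,f_n)$, is an \emph{exact} complex of locally free sheaves. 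Tensoring an acyclic complex of locally free sheaves with a line bundle keeps it acyclic, so $\mathcal K_{\mydot}\otimes\cO_Y(-nG)\simeq 0$ in $D(Y)$ and hence $\myR \Gamma\bigl(Y,\mathcal K_{\mydot}\otimes\cO_Y(-nG)\bigr)\simeq 0$ in $D(R)$.

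The last step is to factor multiplication by $u$. For $0\le m\le n$ we have $u\in\overline{J^{n}}\subseteq\overline{J^{m}}$, and $J^{m}\cO_Y=\cO_Y(-mG)$ is integrally closed (an invertible ideal sheaf on the normal scheme $Y$, locally a principal ideal in a normal domain), so $u\,\cO_Y\subseteq\cO_Y(-mG)$. Taking $m=n-k$, multiplication by $u$ is a map $\cO_Y\to\cO_Y\bigl((k-n)G\bigr)$; tensoring these with $\bigwedge^{k}\cO_Y^{n}$ in homological degree $k$ assembles into a map of complexes $\mu\colon K_{\mydot}(\mathbf{f};\cO_Y)\to\mathcal K_{\mydot}\otimes\cO_Y(-nG)$, using $\mathcal K_k\otimes\cO_Y(-nG)=\bigwedge^{k}\cO_Y^{n}\otimes\cO_Y\bigl((k-n)G\bigr)$. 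Composing $\mu$ with the natural map $j\colon\mathcal K_{\mydot}\otimes\cO_Y(-nG)\to K_{\mydot}(\mathbf{f};\cO_Y)$ induced in degree $k$ by $\cO_Y\bigl((k-n)G\bigr)\hookrightarrow\cO_Y$ yields $j\circ\mu=(\,\cdot u\,)$. Applying $\myR \Gamma(Y,-)$, and using $\myR \Gamma\bigl(Y,K_{\mydot}(\mathbf{f};\cO_Y)\bigr)=K_{\mydot}\bigl(\mathbf{f};\myR \Gamma(\cO_Y)\bigr)=K_{\mydot}(\mathbf{f};X)$ (the projection formula, valid since $K_{\mydot}(\mathbf{f};R)$ is a bounded complex of finite free modules), one concludes that multiplication by $u$ on $K_{\mydot}(\mathbf{f};X)$ factors through the zero object, hence is zero in $D(R)$. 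Therefore $u$ maps to $0$ in $H_0(\mathbf{f};X)$, i.e.\ $u\in J^{\Kos}$, as claimed.

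I expect the main obstacle to be the bookkeeping in the last paragraph: checking that $\mu$ is genuinely a chain map with the correct line-bundle twist in each degree (both differentials are contraction against $(f_1,\dots,f_n)$, so this reduces to associativity of multiplying sections of $\cO_Y(-G)$, $\cO_Y(mG)$, and functions, but the signs and twists must be tracked carefully), and that $j\circ\mu$ is literally multiplication by $u$. The conceptual point — that membership $u\in\overline{J^{n}}$ is exactly the input needed to route multiplication by $u$ through the acyclic complex $\mathcal K_{\mydot}$, which takes this shape only because $J\cO_Y$ is invertible and cut out by $n$ generators — is what drives the argument; exactness of $\mathcal K_{\mydot}$, vanishing of $\myR \Gamma$ of an acyclic complex, and the projection-formula identification are routine. (Note the parameter hypothesis on $\mathbf{f}$ is not in fact needed for this argument; it is included because this is the form in which the statement is used.)
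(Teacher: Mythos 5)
Your argument is correct, and it takes a genuinely different route from the paper's. Both are ``route multiplication by $u$ through something acyclic,'' but the paper realizes this via local cohomology: it uses the Sancho de Salas exact sequence for the partially normalized Rees algebra $S$ to show the \v{C}ech class $[h/(f_1\cdots f_n)]\in H^n_J(R)$ lies in the image of $H^n_{J'}(S_\fn)$, kills that image by the depth of $\myR\Gamma(\cO_Y)$ coming from a log resolution of $(\Spec S_\fn, \Spec R)$ (\autoref{lem.VanishingLocalCohomologyForIdealsContainingARegSeq}), and then transfers back to the Koszul complex using the injectivity statement \autoref{thm.InjectiveMapOnCohomologyKoszulToLocal}, which is itself derived from the Cohen--Macaulayness of $\myR\Gamma(\cO_Y)$. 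You instead build the acyclicity \emph{on $Y$ itself}: the Koszul complex of the nowhere-vanishing section $(f_1,\dots,f_n)$ of $\cO_Y(-G)^{\oplus n}$ is an explicit exact complex of line-bundle twists of $\bigwedge^\mydot\cO_Y^n$, and $u\in\overline{J^{n}}$ is precisely the membership needed so that $u\cO_Y\subseteq\cO_Y(-(n-k)G)$ in each degree, letting multiplication by $u$ on $K_{\mydot}(\mathbf f;\cO_Y)$ factor as $j\circ\mu$ through this acyclic complex. Taking $\myR\Gamma$ and invoking the projection formula finishes it. Your local verification (writing $f_i = g h_i$ with $(h_i)=(1)$) that $\mu$ and $j$ are chain maps and that $j\circ\mu$ is literally multiplication by $u$ checks out.

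Two remarks worth emphasizing. First, as you observe, the parameter hypothesis is genuinely unused in your argument; you therefore get \autoref{thm.SkodaForPowerKos} in one shot, bypassing both the Hochster--Huneke reduction in the paper's proof of that theorem and the persistence property (\autoref{prop.PersistenceOfKos}) on which that reduction relies. Second --- and this is perhaps the more interesting structural difference --- your argument never uses that $\myR\Gamma(\cO_Y)$ is a Cohen--Macaulay complex: it only uses functoriality of $\myR\Gamma$, the projection formula, and the geometric acyclicity of $\mathcal K_{\mydot}$. The paper's proof essentially cannot avoid Cohen--Macaulayness because it passes through \autoref{thm.InjectiveMapOnCohomologyKoszulToLocal}. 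This means your argument also establishes $\overline{J^{n}}\subseteq J^{\mathrm{K}\pi}$ for the fixed-alteration closures of \autoref{rem.KoszulClosureInCharp} in any characteristic, for any proper surjective $\pi$ with $J\cO_Y$ invertible (normality of $Y$ is still needed for the integral-closure step $\overline{J^m}\cO_Y\subseteq J^m\cO_Y$). The only implicit dependence to make explicit: you chose a convenient resolution with $J\cO_Y$ invertible, which is legitimate because $J^{\Kos}$ is independent of the resolution by \autoref{prop.kosIndependence}.
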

{

\begin{proof}
Pick $h \in \overline{J^n}$.  Let $\mu : W \to \Spec R$ denote the normalized blowup of $J$.  By \cite[Lemma 3.1]{RodriguezVillalobosSchwede.BrianconSkodaProperty} (which was first shown in a proof from  \cite{LipmanTeissierPseudoRational}), we see that the \Cech class 
\[ 
    [h/(f_1 \dots f_n)] \in \ker \big( H^n_J(R) \to H^n_{\mu^{-1}(V(J))}(W, \cO_W)\big).
\]
Hence, if $\pi : Y \to W \to \Spec R$ is a resolution, we also see that 
\[
    [h/(f_1 \dots f_n)] \mapsto 0 \in H^n_{\pi^{-1}V(J)}(Y, \cO_Y) \cong H^n_J( \myR \Gamma(\cO_Y)).
\]
Now, $h$ maps to the Koszul class $[h /(f_1 \dots f_n)] = 0$ in the composition
\[
    R \to H_0(\underline{f}, \myR \Gamma(\cO_Y)) \to H^n_J( \myR \Gamma(\cO_Y)).
\]
But $H_0(\underline{f}, \myR \Gamma(\cO_Y)) \hookrightarrow H^n_J( \myR \Gamma(\cO_Y))$ injects by \autoref{thm.InjectiveMapOnCohomologyKoszulToLocal}.  Hence $h \mapsto 0$ in 
$R \to H_0(\underline{f}, \myR \Gamma(\cO_Y))$ and so $h \in J^{\Kos}$ as desired.
\end{proof}}

\begin{theorem}
    \label{thm.SkodaForPowerKos}
    Suppose $R$ is a domain satisfying \autoref{set.NiceSetup} and $J \subseteq R$ is an ideal generated $n$ elements, $J = (f_1, \dots, f_n)$.  Then 
    \[
        \overline{J^{n}} \subseteq J^{\Kos}.
    \]  
\end{theorem}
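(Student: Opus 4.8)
The plan is to reduce the general statement to the partial‑system‑of‑parameters case already proven in \autoref{thm.SkodaForParameterKos}, via a ``graph'' trick that replaces $J$ by an ideal which genuinely is a parameter ideal in a larger ring. Since both integral closure and $\KH$‑closure commute with localization (\autoref{prop.FlatMapsWithRationalSingularitiesFibers}), one reduces at once to the case where $(R,\fm)$ is local; the case $J=R$ being trivial, we may assume $J=(f_1,\dots,f_n)\subseteq\fm$. (Using \autoref{prop.FiniteExtensionComputationKHClosure} together with the identity $\overline{J^nR}=\overline{J^nR^{\mathrm N}}\cap R$ one may even reduce further to $R$ normal, although this is not essential.) Now set $P=R[x_1,\dots,x_n]$, let $\mathfrak b=(x_1-f_1,\dots,x_n-f_n)\subseteq P$ — a regular sequence on $P$ with $P/\mathfrak b\cong R$ — and let $\mathfrak M=\fm P+(x_1,\dots,x_n)P$, which equals $\fm P+\mathfrak b$ because $f_i\in\fm$. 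Put $Q:=P_{\mathfrak M}$ and $\mathfrak a:=(x_1,\dots,x_n)Q$. Then $Q$ is a local domain essentially of finite type over $R$, hence satisfies \autoref{set.NiceSetup}, with $\dim Q=\dim R+n$; and since $Q/\mathfrak a Q\cong R$ we get $\dim Q/\mathfrak a=\dim Q-n$, so $x_1,\dots,x_n$ is a partial system of parameters of $Q$. There is a surjection $\rho\colon Q\twoheadrightarrow Q/\mathfrak bQ=R$ with $x_i\mapsto f_i$ (so $\rho(\mathfrak a)=J$) together with a flat section $\iota\colon R\hookrightarrow Q$, $\rho\circ\iota=\mathrm{id}_R$.

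The heart of the matter is the following \textbf{Key Lemma}: $\overline{J^nR}\subseteq\rho\bigl(\overline{\mathfrak a^nQ}\bigr)$; equivalently, every $h\in\overline{J^nR}$ has a lift $u\in\overline{\mathfrak a^nQ}$ with $\rho(u)=h$. (Persistence of integral closure under $\rho$ gives the reverse inclusion for free, so this actually says $\rho(\overline{\mathfrak a^nQ})=\overline{J^nR}$.) Granting this, the theorem follows formally: \autoref{thm.SkodaForParameterKos} applied to the domain $Q$ and the partial‑system‑of‑parameters ideal $\mathfrak a$ gives $\overline{\mathfrak a^nQ}\subseteq\mathfrak a^{\KH}$, while persistence of $\KH$‑closure (\autoref{prop.PersistenceOfKos}) applied to $\rho\colon Q\to R$ gives $\rho(\mathfrak a^{\KH})\subseteq J^{\KH}$ (the extension of $\mathfrak a$ under $\rho$ being $J$); hence $h=\rho(u)\in\rho(\overline{\mathfrak a^nQ})\subseteq\rho(\mathfrak a^{\KH})\subseteq J^{\KH}$.

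To prove the Key Lemma, take $h\in\overline{J^nR}$ and an integral dependence relation $h^{\ell}+a_1h^{\ell-1}+\dots+a_\ell=0$ with $a_j\in(J^n)^j=J^{nj}$; choose expressions $a_j=\sum_{|\alpha|=nj}c_{j,\alpha}f^{\alpha}$, and form the naïve lift $u_0:=\iota(h)$ together with $\tilde a_j:=\sum_{|\alpha|=nj}\iota(c_{j,\alpha})\,x^{\alpha}\in\mathfrak a^{nj}Q$. Because $x_i\equiv f_i\pmod{\mathfrak bQ}$ and $\iota$ is a ring homomorphism, one checks $u_0^{\ell}+\tilde a_1u_0^{\ell-1}+\dots+\tilde a_\ell\in\mathfrak bQ$, so $u_0$ is integral over $\mathfrak a^nQ+\mathfrak bQ$. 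The remaining task — and this is the step I expect to be the main obstacle — is to kill the error term lying in $\mathfrak bQ$: one must show the coset $\iota(h)+\mathfrak bQ$ actually meets $\overline{\mathfrak a^nQ}$. My plan here is to pass to the completion $\widehat Q$ (harmless, since $\overline{(-)}$ and $\KH$ behave well under the faithfully flat map $R\to Q$ and under $\fm$‑adic completion, and $\overline{I\widehat Q}\cap Q=\overline I$), where $\mathfrak b\widehat Q$ is generated by the regular sequence $x_i-f_i$ with $\widehat Q/\mathfrak b\widehat Q=\widehat R$, and there correct $u_0$ by an element of $\mathfrak b\widehat Q$ so that it satisfies a monic equation with coefficients in the $\mathfrak a^{nj}\widehat Q$ — adjusting simultaneously the $(\mathfrak b\widehat Q\cap\mathfrak a^{nj}\widehat Q)$‑parts of those coefficients. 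This reduces to a Hensel/implicit‑function–type solvability of a $\mathfrak b\widehat Q$‑valued obstruction; alternatively one may invoke Artin approximation, using that membership in $\overline{\mathfrak a^n\widehat Q}$ is governed by the finitely many Rees valuations of $\mathfrak a$. Everything outside this correction step is routine manipulation of the constructions above together with the cited results.
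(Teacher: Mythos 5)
Your overall strategy---produce a local domain $S$ mapping onto $R$ in which $h$ lifts to an element integral over the $n$th power of an ideal generated by a partial system of parameters, then apply \autoref{thm.SkodaForParameterKos} and persistence (\autoref{prop.PersistenceOfKos})---is precisely what the paper does, following Hochster--Huneke. The problem is your choice of $Q=R[x_1,\dots,x_n]_{\fm+(x_1,\dots,x_n)}$: the Key Lemma is \emph{false}, and the issue you flag as ``the main obstacle'' cannot be repaired inside $Q$ or $\widehat Q$. Since $x_1,\dots,x_n$ is a regular sequence in $Q$, one has $\mathrm{gr}_{\mathfrak a}(Q)\cong R[X_1,\dots,X_n]$, a domain; the usual $\mathfrak a$-adic order-function computation applied to a monic relation $u^m+a_1u^{m-1}+\cdots+a_m=0$ with $a_j\in\mathfrak a^{nj}$ then forces $u\in\mathfrak a^n$, so $\overline{\mathfrak a^nQ}=\mathfrak a^nQ$. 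Hence $\rho(\overline{\mathfrak a^nQ})=\rho(\mathfrak a^nQ)=J^n$, and the Key Lemma is exactly the generally false assertion that $J^n$ is integrally closed. Passing to $\widehat Q$ changes nothing: $\mathrm{gr}_{\mathfrak a}(\widehat Q)\cong\widehat R[X_1,\dots,X_n]$ is still reduced (as $R$ is excellent), so again $\overline{\mathfrak a^n\widehat Q}=\mathfrak a^n\widehat Q$, and no Hensel- or Artin-approximation correction within the coset $\iota(h)+\mathfrak b\widehat Q$ can land in $\mathfrak a^n\widehat Q$ unless $h\in J^n$ to begin with.

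What the Hochster--Huneke reduction that the paper cites actually does is pass to a \emph{finite} extension of a localization of $R[x_1,\dots,x_n]$ tailored to $h$: lift the integral-dependence coefficients $a_j\in J^{nj}$ to $\tilde a_j\in(x_1,\dots,x_n)^{nj}$, form $C:=R[x_1,\dots,x_n][Z]/(Z^m+\tilde a_1Z^{m-1}+\cdots+\tilde a_m)$, choose a minimal prime $\mathfrak p$ of $C$ contained in the kernel of the evaluation map $C\to R$ sending $x_i\mapsto f_i$ and $Z\mapsto h$ (so that $C/\mathfrak p$ is a domain still surjecting onto $R$), and localize appropriately to obtain $S$. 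In $S$ the image $z$ of $Z$ satisfies the monic relation on the nose, so $z\in\overline{(x_1,\dots,x_n)^nS}$ by definition; and because $S$ is module-finite over your $Q$, the elements $x_1,\dots,x_n$ remain a partial system of parameters there. That finite extension is the ingredient your approach is missing, and it is not a technicality that can be absorbed by completing.
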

\begin{proof}
    Without loss of generality, we may assume that $R$ is local with maximal ideal $\m$.
    Suppose $h \in \overline{J^n}$.
    Following the proof of \cite[Theorem 7.1]{HochsterHunekeApplicationsofBigCM}, see also the proof of \cite[Theorem 4.1]{RodriguezVillalobosSchwede.BrianconSkodaProperty}, we can find a local map $\psi : S \to R$ of excellent local domains satisfying the following properties:
    \begin{enumerate}
        \item There exist a partial system of parameters $x_1, \dots, x_n \in S$ with $\psi(x_i) = f_i$.
        \item There exist $z \in \overline{(x_1, \dots, x_n)^n}$ with $\psi(z) = h$.
    \end{enumerate}
    It follows that $z \in (x_1, \dots, x_n)^{\Kos}$ by \autoref{thm.SkodaForParameterKos}.  By persistence (\autoref{prop.PersistenceOfKos}), it follows that $\psi(z)=h \in J^{\Kos}$ as desired.
\end{proof}

\begin{corollary}
    Suppose $R$ is a domain satisfying \autoref{set.NiceSetup} and $I = (f)$ is a principal ideal.  Then $\overline{I} = I^{\Kos}$.
\end{corollary}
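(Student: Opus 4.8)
The plan is simply to combine the two inclusions that have already been established. For one direction, \autoref{prop.KHClosureInIntegralClosure} gives $I^{\Kos} \subseteq \overline{I}$ for \emph{any} ideal of a ring as in \autoref{set.NiceSetup}, so in particular this holds for the principal ideal $I = (f)$. For the reverse direction, I would invoke \autoref{thm.SkodaForPowerKos} in the smallest possible case: writing $I = (f)$ exhibits $I$ as an ideal generated by $n = 1$ element, so that theorem yields $\overline{I} = \overline{I^{1}} \subseteq I^{\Kos}$. Putting the two containments together gives $\overline{I} = I^{\Kos}$, which is the assertion.

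The only point to check is that the hypotheses of the two cited results line up with those of the corollary, and they do: a domain satisfying \autoref{set.NiceSetup} is in particular a ring as in \autoref{set.NiceSetup} (so \autoref{prop.KHClosureInIntegralClosure} applies), and it is literally a ``domain satisfying \autoref{set.NiceSetup}'' as demanded by \autoref{thm.SkodaForPowerKos}. So there is no genuine obstacle: all of the content of the statement is carried by the Brian\c{c}on-Skoda theorem \autoref{thm.SkodaForPowerKos} for the nontrivial inclusion, together with the containment $I^{\Kos}\subseteq\overline I$ from \autoref{prop.KHClosureInIntegralClosure}.

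If one wished to avoid appealing to the full Brian\c{c}on-Skoda machinery, one could instead reprove $\overline{(f)} \subseteq (f)^{\Kos}$ directly in the principal case, mirroring the proof of \autoref{prop.KHClosureInIntegralClosure}: here $I\cO_Y = \cO_Y(-G)$ is automatically a line bundle for any resolution $\pi\colon Y\to\Spec R$ on which $f$ pulls back to a divisor, and $\overline{(f)} = \Image\bigl(\Gamma(\cO_Y(-G)) \to \Gamma(\cO_Y)\bigr)\cap R = \overline{f R^{\mathrm{N}}}\cap R$ with $R^{\mathrm{N}} \simeq \Gamma(\cO_Y)$ the normalization; one then traces this class through $H_0(f; \myR\Gamma(\cO_Y))$ exactly as in that proof. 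This alternative is not needed, however, since the two-line argument above already suffices.
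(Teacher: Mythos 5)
Your argument matches the paper's proof exactly: one containment is \autoref{prop.KHClosureInIntegralClosure}, the other is \autoref{thm.SkodaForPowerKos} with $n=1$. The optional direct argument you sketch is fine but unnecessary, as you note.
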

\begin{proof}
    We have that $\overline{I} \subseteq I^{\Kos}$ by \autoref{thm.SkodaForPowerKos}.  The reverse containment is \autoref{prop.KHClosureInIntegralClosure}.  
\end{proof}

Based upon \cite{BrianconSkoda, LipmanTeissierPseudoRational,  LipmanSathayeJacobianIdealsAndATheoremOfBS, HHmain, heitmannepf}, it is natural to expect that 
\[
    \overline{J^{n+k-1}} \subseteq (J^k)^{\Kos}.
\]
Unfortunately this is false as explained the example below.  This is not completely surprising however, as the Koszul complex does not play as nicely as one might want with powers of ideals.  {However, in \cite{MaMcDonaldRGSchwede.BSForPseudorRational}, it is shown that if one uses the Buchsbaum-Eisenbud complex or equivalently the associated Eagon-Northcott complex instead of the Koszul complex {to define a closure}, then the containment does hold.  Perhaps this should be interpreted as saying that while the Koszul complex gives a derived interpretation of $R/J$, the Eagon-Northcott or Buchsbaum-Eisenbud complex gives a derived description of $R/J^k$.}

\begin{example}
    \label{exam.KosClosureNoGeneralizedBS}
    Using the Macaulay2 implementation as discussed below in \autoref{sec.ComputationsInM2} one can verify the following.  In $R = {\bQ}[x,y,z]/(x^5+y^5+z^5)$, if one sets $I = (x,y)$, a parameter ideal, then 
    \[ 
        \overline{I^3} \not\subseteq (I^2)^{\KH}.
    \]
    For the explicit computation in Macaulay2, see \autoref{rem.M2HironakaPreClosure} below.
\end{example}

\section{Computations and examples for \KosClos}
\label{sec.ComputationsInM2}

An interesting property of the \KosClos is that the hard part in computing it is computing the resolution of singularities $\pi : Y \to \Spec R$ and computing $\myR \Gamma(\cO_Y)$.  Compare this to tight, or even plus closure, which can be extremely subtle to compute (although tight closure of $0$ in local cohomology is understood thanks to \cite{KatzmanParameterTestIdeals}).  Indeed, whenever one can compute a resolution of singularities (for instance for a cone singularity) or if one knows $\Gamma(\omega_Y)$ for some regular alteration, it is not difficult to implement \KosClos in Macaulay2 \cite{M2}.  We have done that and the code is available here:
\begin{center}
    \url{https://www.math.utah.edu/~schwede/M2/KHClosure.m2}
\end{center}
and as an ancillary file in the arXiv submission.  There are two key ingredients in this implementation, the {\tt BGG} package \cite{BGGSource} to compute\footnote{If $\Gamma(\omega_Y)$ is known for a regular alteration, then {\tt BGG} is not needed as $\myR \Gamma(\cO_Y)$ can be computed via duality.} $\myR\Gamma(\cO_Y)$ and the {\tt Complexes} package \cite{ComplexesSource} to work with complexes sufficiently functorially.

\begin{example}
\label{exam.KHClosureExampleComputations}
    We show how to compute this closure in Macaulay2, when we know the resolution of singularities.
One of the most studied examples in tight closure theory is the cone over an elliptic curve and more generally diagonal hypersurfaces, see for instance \cite{McDermottTightClosurePlusClosureCubicalCones}, \cite{Singh.TightClosureDiagonalHypersurfaces} and \cite[Example 9.5]{Brenner.SlopesOfVBOnProjectiveCurvesAndApplicationsToTC}.
{
    \small
    \color{blue}
\begin{verbatim}
    i1 : loadPackage "KHClosure";
    
    i2 : A = QQ[x,y,z]; J = ideal(x^3+y^3+z^3); m = ideal(x,y,z); R = A/J;
    
    i6 : koszulHironakaClosure(ideal(x,y)*R,m) --KH closure of a parameter ideal - not closed 
    
                       2
    o6 = ideal (y, x, z )
    
    o6 : Ideal of R
    
    i7 : diagonal2 = koszulHironakaClosure(ideal(x^2,y^2,z^2)*R,m)
    
                 2   2   2
    o7 = ideal (z , y , x )
    
    o7 : Ideal of R
    
    i8 : member(x*y*z,diagonal2) -- it is in the tight closure 
    
    o8 = false
    
    i9 : diagonal3 = koszulHironakaClosure(ideal(x^3,y^3,z^3)*R,m)
    
                 3   3     3    3   2 2 2
    o9 = ideal (z , y , - y  - z , x y z )
    
    o9 : Ideal of R
    
    i10 : member(x^2*y^2*z^2, diagonal3) --in both the tight and KH closures
    
    o10 = true
    
    i11 : BrennerComparison = koszulHironakaClosure(ideal(x^4,x*y,y^2)*R,m)
    
                  2         3     3
    o11 = ideal (y , x*y, -y , x*z )
    
    o11 : Ideal of R
    
    i12 : member(y*z^2, BrennerComparison) --this is in tight closure, but not KH closure
    
    o12 = false
\end{verbatim}
}
In the above examples, the term {\color{blue}{\tt m}} is the ideal whose blowup computes the resolution of singularities.  Alternately, you can pass it a module isomorphic to $\Gamma(\omega_Y)$ for any regular alteration.  In fact, in more complicated examples that tends to be much faster as it bypasses the {\tt BGG} package, see \autoref{rem.SpeedConsiderations} and \autoref{subsubsec.ComputationsWithoutBlowups} for more discussion.

Additionally, for higher degree diagonal hypersurfaces, \KosClos does not always contain the elements in tight closure.  Compare the next example with \cite{Singh.TightClosureDiagonalHypersurfaces}.
{
    \small
    \color{blue}
\begin{verbatim}
    i2 : A = QQ[x,y,z,w]; J = ideal(x^4+y^4+z^4+w^4);  m = ideal(x,y,z,w);  R = A/J;
    
    i6 : diagonal3 = koszulHironakaClosure(ideal(x^3,y^3,z^3,w^3), m) --it's already closed
    
                 3   3   3   3
    o6 = ideal (w , z , y , x )
    
    o6 : Ideal of R
    
    i7 : member(x^2*y^2*z^2*w^2, diagonal3)
    
    o7 = false
\end{verbatim}
}

Note, in \cite[Remark 4.5]{BrennerKatzman.ArithmeticOfTightClosure} it is shown that $(x,y,z)^7 \subseteq (x^4,y^4,z^4)^*$ in $k[x,y,z]/(x^7+y^7+z^7)$ in almost all characteristics $p$.  We do not have that for \KosClos, although we do have $(x,y,z)^8$ contained in $(x^4,y^4,z^4)^{\Kos}$.
{
    \small
    \color{blue}
\begin{verbatim}
    i2 : A = QQ[x,y,z]; J = ideal(x^7+y^7+z^7);  m = ideal(x,y,z);  R = A/J;
    
    i6 : BrennerKatzmanExample45=koszulHironakaClosure(ideal(x^4,y^4,z^4), m)
    
                 4   4   4   2 3 3   3 2 3   3 3 2
    o6 = ideal (z , y , x , x y z , x y z , x y z )
    
    o6 : Ideal of R
    
    i7 : isSubset(m^7*R, BrennerKatzmanExample45)
    
    o7 = false
    
    i8 : isSubset(m^8*R, BrennerKatzmanExample45)
    
    o8 = true
\end{verbatim}
}
\end{example}

\begin{remark}
    Even in a fixed positive characteristic, for any blowup $Y \to \Spec R$, thanks to \autoref{rem.KoszulClosureInCharp}, the {\tt koszulHironakaClosure} Macaulay2 method will produce an ideal contained in the plus closure of $I$ (and hence also in the tight closure of $I$).
\end{remark}

\begin{remark}
\label{rem.SpeedConsiderations}
    The {\tt koszulHironakaClosure} function can be quite fast, frequently much faster than computing the integral closure in Macaulay2 for instance.  In higher (co)dimensions however, the typical bottleneck is computing the derived pushforward $\myR \Gamma(\cO_Y)$ using the {\tt BGG} package.  It is beyond our computers' capabilities for $Y$ a resolution of a cone over an Abelian surface in $\mathbb{P}^8$.  Instead, we recommend calling {\tt koszulHironakaClosure} with $\Gamma(Y, \omega_Y)$ instead of an ideal, as the computation is much faster, as done in the example below.
    {
    \small
    \color{blue}
    \begin{verbatim}
    i2 : A = QQ[xr,yr,zr,xs,ys,zs,xt,yt,zt];
    
    i3 : n = 3; B = (QQ[x,y,z]/(ideal(x^n+y^n+z^n)))**(QQ[r,s,t]/(ideal(r^n+s^n+t^n)));
    
    i5 : phi = map(B, A, {x*r,y*r,z*r,x*s,y*s,z*s,x*t,y*t,z*t});
    
    i6 : J = ker phi; -- make the Segre product, not Cohen-Macaulay
    
    i7 : R = A/J; -- a cone over an Abelian variety
    
    i8 : mR = ideal(xr,yr,zr,xs,ys,zs,xt,yt,zt); --maximal ideal, actually the multiplier ideal
    
    i9 : partParm = ideal(xs,yt);
    
    i10 : time trim koszulHironakaClosure(partParm, mR*R^1)
     -- used 1.6527s (cpu); 1.15284s (thread); 0s (gc)
    
    o10 = ideal (yt, xs, zs*zt, xr*yr)
    
    o10 : Ideal of R
\end{verbatim}
}
The above computation of the \KosClos of a parameter ideal is beyond the capabilities of our computers if we use the blowup strategy.  However, as it is a quasi-Gorenstein isolated log canonical singularity, we know that $\Gamma(\omega_Y) = \fm \omega_R$, and so we can use that to compute the \KosClos.
\end{remark}

\subsection{Our Macaulay2 implementation}

Suppose $S \to R$ is a surjection from a polynomial ring to $R$, $({\bf f}) \subseteq S$ maps onto $J$ in $R$, and $\pi : Y \to \Spec R$ is a resolution of singularities.  The {\tt BGG} package \cite{BGGSource} lets one compute the complex $\myR\Gamma(\cO_Y)$ as a complex of $S$-modules.  We then consider the map 
\[
    \myR\Gamma(\cO_Y) \to K_{\mydot}({\bf f}; S) \otimes^{\myL}_S \myR\Gamma(\cO_Y)
\]
which we construct via the {\tt Complexes} package \cite{ComplexesSource}.  If $R$ is normal, then the $0$th cohomology on the left is exactly $R$, otherwise it is the normalization $R^{\mathrm{N}}$.  We can then take $0$th homology of that map and consider the image $M$ of 
\begin{equation}
\label{eq.CohomologyMapForM2}
    H^0\big(\myR\Gamma(\cO_Y)\big) \to H^0\big(K_{\mydot}({\bf f}; S) \otimes^{\myL}_S \myR\Gamma(\cO_Y)\big).
\end{equation}

Note that 
\[
    K_{\mydot}({\bf f}; S) \otimes^{\myL}_S \myR\Gamma(\cO_Y) \cong K_{\mydot}({\bf f}; S) \otimes^{\myL}_S R \otimes_R^{\myL} \myR\Gamma(\cO_Y) =K_{\mydot}({\bf f}; R) \otimes_R^{\myL} \myR\Gamma(\cO_Y) 
\]
and so there is no dependence on the choice of $S$.  

If $R$ is reduced but not normal with normalization $R^{\mathrm{N}}$, then the image $M$ of the map \autoref{eq.CohomologyMapForM2}
is 
\[
    M = R^{\mathrm N}/ ({\bf f})R^{\mathrm N})^{\Kos}
\]
The $R$-annihilator\footnote{In fact, we take the $S$-annihilator in Macaulay2, but as $R$ is a quotient of $S$, this is harmless.} of that is 
\[
    \{ x \in R \;|\; x \in (J R^{\mathrm N})^{\Kos} \} = R \cap (J R^{\mathrm N})^{\Kos}
\]
which agrees with $J^{\Kos}$ thanks to \autoref{prop.FiniteExtensionComputationKHClosure}.

\subsubsection{Computations without blowups}
\label{subsubsec.ComputationsWithoutBlowups}
Instead of specifying an ideal to compute a resolution of singularities, we can specify $\Gamma(\omega_Y)$.  Then we can can observe, via repeated usage of Grothendieck duality, that 
\[
    \myR \Gamma(\cO_Y) \cong \myR \Hom_R( \myR \Gamma(\omega_Y^{\mydot}), \omega_R^{\mydot}) \cong \myR \Hom_A( \myR \Gamma(\omega_Y^{\mydot}), \omega_A^{\mydot}) = \myR \Hom_A( \myR \Gamma(\omega_Y), \omega_A)[\dim A - \dim R]
\]
The last of which Macaulay2 can compute.  In our experience, this implementation is much faster than the implementation where we compute the blowup.

\subsection{Ideal powers -- \KosClos is not a semi-prime or star operation}
\label{subsec.NotASemiprimeOrStarOperation}

It is natural to ask if \KosClos is a semi-prime or star operation as described in \cite[Section 4.1]{nme-guide2}, see also \cite{Petro.SomeResultsOnTheAsymptoticCompletion,Gilmer.MultiplicativeIdealTheory}.

Recall that a closure operation is \emph{semi-prime} if we have $I^{\cl} J^{\cl} \subseteq (IJ)^{\cl}$.  It is a \emph{star operation} if $(xJ)^{\cl} = x(J^{\cl})$.  Using our {\tt KHClosure} package, we can easily verify that \KosClos satisfies neither property.

{    
    \color{blue}
    \small{
\begin{verbatim}
    i2 : A = QQ[x,y,z];
    
    i3 : J = ideal(x^5+y^5+z^5) ;
    
    i4 : mA = ideal(x,y,z);
    
    i5 : R = A/J;
    
    i6 : I = ideal(x,y);
    
    i7 : IKH = koszulHironakaClosure(I, mA)
    
                       2
    o7 = ideal (y, x, z )
    
    o7 : Ideal of R
    
    i8 : I2KH = koszulHironakaClosure(I^2, mA)
    
                 2        2   4     3     3
    o8 = ideal (y , x*y, x , z , y*z , x*z )
    
    o8 : Ideal of R
    
    i9 : isSubset(IKH*IKH, I2KH)
    
    o9 = false
    
    i10 : IxKH = koszulHironakaClosure(I*x,mA)
    
                       2     3   5    5
    o10 = ideal (x*y, x , x*z , y  + z )
    
    o10 : Ideal of R
    
    i11 : IxKH == x*IKH
    
    o11 = false
\end{verbatim}}
}


\section{Alternate alteration-based (pre)closures}
The \KosClos introduced above was not the first  operation we considered.  Indeed, we developed it when trying to study the colon capturing property of other operations.  We mention these  operations below.

\subsection{Hironaka preclosure}
\label{subsec.HironakaPreclosure}
We begin with what we now call the Hironaka-preclosure and observe that it has a natural generalization for modules.

\begin{defn}
    Suppose $R$ satisfies \autoref{set.NiceSetup}. 
    Let $L\subseteq M$ be $R$ modules and let $\pi\colon Y\to\Spec R$ be a regular alteration (for instance a resolution of singularities). Define the \emph{Hironaka preclosure} of $L$ inside $M$ to be
        \[L^{\mathrm{Hir}}_M\colon=\ker(M\to H_0    (M/L\otimes_R^{\myL} \myR \Gamma(\cO_Y))).\]    
\end{defn}

\begin{prop}
    The Hironaka preclosure of $L$ inside $M$ is a well-defined preclosure operation. Furthermore, it is functorial, residual, and, if $R$ is local, is faithful.
\end{prop}
\begin{proof}
    This operation is independent of the choice of alteration by a similar argument as the one in the proof of \autoref{prop.kosIndependence}. Clearly the Hironaka preclosure is extensive, as $L\subseteq\ker (M\to H0(M/L\otimes_R^L\myR \Gamma(\cO_Y)))$. Similarly, it is order preserving.    

    To see that it is functorial, consider $f\colon M\to N$ and the following commutative diagram
    \[\begin{tikzcd}
        M\arrow[r,"f"]\arrow[d]&N\arrow[d]\\
        H_0(M/L\otimes_R^\bL \myR \Gamma(\cO_Y))\arrow[r]&H_0(N/f(L)\otimes_R^\bL \myR \Gamma(\cO_Y))
    \end{tikzcd}\]
    which implies $f(L^{\mathrm{Hir}}_M)\subseteq f(L)^{\mathrm{Hir}}_N$. The Hironaka preclosure must also be residual because $M\to H_0(M/L\otimes_R^\bL \myR \Gamma(\cO_Y))$ factors through $M/L$. Finally, suppose $(R,\fm,k)$ is local. Then because $\myR \Gamma(\cO_Y)$ is locally Cohen-Macaulay the natural map $R\to \myR \Gamma(\cO_Y)\to H_0(k\otimes_R^\bL \myR \Gamma(\cO_Y))$ is nonzero and so Hironaka preclosure is faithful.
\end{proof}

{
It was shown in \cite{MaMcDonaldRGSchwede.BSForPseudorRational} that if $R$ is a ring satisfying satisfying \autoref{set.NiceSetup} (or more generally, in any characteristic if $R$ has a resolution of singularities and we use that resolution of singularities to define the pre-closure), and if $J$ is $n$-generated, then we have that $\overline{J^{n+k-1}} \subseteq (J^k)^{\Hir}$.  Hence, Hironaka preclosure fixes at least one issue with Koszul-Hironaka closure.  Likewise, it also satisfies the following property that Koszul-Hironaka closure lacks.
}

{
\begin{prop}
\label{prop.HirClosureIdealProducts}
    For any ideal $I$ of $R$ and any $R$-submodule inclusion $L \subseteq M$, we have $I\cdot L^{\Hir}_M \subseteq (IL)^{\Hir}_M$.  In particular, $\Hir$ is a \emph{semiprime operation} (see \cite[Lemma 4.1.2(1)]{nme-guide2} and its proof, noting that idempotence was not necessary), in the sense that for any $x\in R$ and ideal $J$, we have $x \cdot J^{\Hir} \subseteq (xJ)^{\Hir}$.
\end{prop}

\begin{proof}
    Let $P_{\mydot}$ be a complex of projective modules (with homological grading) that is quasi-isomorphic to $\myR \Gamma(\cO_Y)$. Since the latter is an $R$-algebra, there is some $\eta \in P_0$ that represents the multiplicative identity; then for $z\in M$, we have $z \in L^{\Hir}_M$ if and only if the cohomology class of $\bar z \otimes \eta$ is $0$ in $H_0(M/L \otimes_R P)$.  Note that by right-exactness of tensor product, we have for any $R$-module $N$ that $\img d_1^{N \otimes_R P} = \img (N \otimes_R \img d_1^P \rightarrow N \otimes_R P_0)$ induced by the inclusion $\img d_1^P \hookrightarrow P_0$.  Thus, we have \[
    L^{\Hir}_M = \{z \in M \mid z \otimes \eta \in \img(M \otimes_R \img d_1^P) + \img (L \otimes_R P_0)\},
    \]
    where both images are in $M \otimes_R P_0$, induced by the inclusions $L \hookrightarrow M$ and $\img d_1^P \hookrightarrow P_0$.
    
    Now, let $z \in L^{\Hir}_M$ and $a\in I$.  Then $z \otimes \eta = \sum_{i=1}^n m_i \otimes d_1^P(p_i) + \sum_{j=1}^t \ell_j \otimes q_j$ in $M \otimes_R P_0$, where $m_i \in M$, $\ell_j \in L$, $p_i \in P_1$, and $q_j \in P_0$.  But then $az \otimes \eta = \sum_{i=1}^n am_i \otimes d_1^P(p_i) + \sum_{j=1}^t a\ell_j \otimes q_j$.  Since $a\ell_j \in L$ for all $j$, it follows from the displayed equation above that $az \in (IL)^{\Hir}_M$.
\end{proof}
}

We do not know if the Hironaka preclosure is idempotent, although computer experimentation suggests it might be the case.  {If $\Hir$ \emph{were} known to be idempotent, then we could obtain the stronger property that for any ideals $I$, $J$, one has $I^{\Hir} J^{\Hir} \subseteq (IJ)^{\Hir}$.  This would follow by the following chain of containments: $I^{\Hir} J^{\Hir} \subseteq (I^{\Hir} J)^{\Hir} = (J I^{\Hir})^{\Hir} \subseteq ((JI)^{\Hir})^{\Hir} = (JI)^{\Hir} = (IJ)^{\Hir}$.

Any preclosure operation $\cl$ has an associated (idempotent) closure operation $\cl^\infty$, known as its \emph{idempotent hull} (see \cite[Construction 3.1.5]{nme-guide2} or \cite[Section 4.6]{DikTho-closure}). Namely, if $\cl^n$ is the result of iteratively applying $\cl$ $n$ times, then for any Noetherian $R$-module $M$, $\cl^\infty$ is defined by $L^{\cl^\infty}_M := \bigcup_{n \in \infty} L^{\cl^n}_M$ for any submodule $L$ of $M$.  Since this is a nested union and $M$ satisfies the ascending chain condition, $L^{\cl^\infty}_M = L^{\cl^n}_M$ for some $n \in \N$.  Algorithmically, as soon as one finds $n$ such that $L^{\cl^n}_M = L^{\cl^{n+1}}_M$, it follows that $L^{\cl^n}_M = L^{\cl^\infty}_M$.   From the fact that $I \cdot L^{\Hir}_M \subseteq (IL)^{\Hir}_M$, it follows from an immediate induction argument that $I \cdot L^{\Hir^\infty}_M \subseteq (IL)^{\Hir^\infty}_M$. Therefore, by the argument in the previous paragraph, we have $I^{\Hir^\infty} J^{\Hir^\infty} \subseteq (IJ)^{\Hir^\infty}$.}
\begin{question}
    Is the Hironaka preclosure idempotent?  That is, is it a closure?  
\end{question}

\begin{question}
       Furthermore, if $I,J \subseteq R$ are ideals and $x \in R$, is $I^{\Hir}J^{\Hir} \subseteq (IJ)^{\Hir}$?  Is $xI^{\Hir} = (xI)^{\Hir}$?
   \end{question}

If $M = R$ and $L = I = (f_1, \dots, f_n)$ is an ideal, then it is easy to see that 
\begin{equation}
    \label{eq.KosInHironaka}
I^{\Kos} \subseteq I^{\mathrm{Hir}}
\end{equation}  
since we have a factorization $R \to K_{\mydot}({\bf f}; R)\otimes^{\myL} \myR\Gamma(\cO_Y) \to R/I \otimes^{\myL} \myR\Gamma(\cO_Y)$.
Furthermore, if $R$ is Cohen-Macaulay and $I = (f_1, \dots, f_n)$ is a parameter ideal then $I^{\Kos} =I^{\mathrm{Hir}}$ (as then $R/I \cong K_{\mydot}({\bf f}, R)$ in the derived category).  In fact, we will see that $I^{\KH} = I^{\Hir}$ for parameter ideals even without the Cohen-Macaulay condition in  \autoref{cor.ClosuresCoincideInChar0ForParameterIdeals}. 

\begin{remark}
The proof of \autoref{prop.KHClosureInIntegralClosure} runs without change and so we obtain that $I^{\mathrm{Hir}} \subseteq \overline{I}$.  
\end{remark}
We also obtain the following comparison with tight closure via reduction modulo $p \gg 0$. 

 \begin{lemma}
 \label{lem.HirIsInPlus}
     Suppose $R$ is a ring of finite type over a field of characteristic zero and $I \subseteq R$.  Then $I^{\mathrm{Hir}} \subseteq I^*$ where the right side denotes reduction modulo $p \gg 0$ tight closure.  $I^{\mathrm{Hir}}$ is even contained in the reduction-to-characteristic $p > 0$ plus closure.
 \end{lemma}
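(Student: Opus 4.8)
The plan is to follow the proof of \autoref{prop.KHContainedInPlusModP} essentially verbatim, with the single change that the Koszul complex $K_{\mydot}({\bf f}; R)$ is replaced by $R/I$ itself (equivalently, by a free resolution of $R/I$), since that is the object appearing in the definition of $I^{\mathrm{Hir}}$. As explained in \autoref{rem.KosInHironaka}'s vicinity, nothing in the Koszul-vs-$R/I$ distinction matters for the reduction-mod-$p$ comparison.

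First I would spread out in the usual way (see \autoref{subsec.ReductionModP}): fix a finitely generated $\bZ$-subalgebra $A$ of the base field together with an $A$-form $R_A$, an ideal $I_A$, and a resolution of singularities (or regular alteration) $\pi_A : Y_A \to \Spec R_A$, chosen so that for all $t$ in a dense open $U \subseteq \mathrm{m}\text{-}\Spec A$ the fiber $\pi_t : Y_t \to \Spec R_t$ is again a regular alteration, the relevant modules are flat over $A/\mathfrak{t}$, and the formation of $R_A/I_A \otimes^{\myL}_{R_A}\myR\Gamma(\cO_{Y_A})$ together with the defining map $R_A \to R_A/I_A \otimes^{\myL}_{R_A}\myR\Gamma(\cO_{Y_A})$ all commute with base change to $A/\mathfrak{t}$. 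In particular $(I^{\mathrm{Hir}})_t$, being the kernel of such a base-changed map, is unambiguous for $t \in U$. (If $R$ is not a domain one first passes to $R/\mathfrak{p}$ for each minimal prime, or interprets $(R_t)^+$ accordingly, exactly as in \autoref{prop.KHContainedInPlusModP}.)

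Next, for $t \in U$, Bhatt's derived direct summand theorem \cite{BhattDerivedDirectSummandCharP} gives a factorization in $D(R_t)$ of the form $R_t \to \myR\Gamma(\cO_{Y_t}) \to (R_t)^+$. Applying $-\otimes^{\myL}_{R_t} R_t/I_t$ and taking $H_0$, and using that $H_0\big(R_t/I_t \otimes^{\myL}_{R_t}(R_t)^+\big) = (R_t)^+/I_t(R_t)^+$ because $(R_t)^+$ is an $R_t$-algebra, we obtain a commuting triangle
\[
    R_t/I_t \longrightarrow H_0\big(R_t/I_t \otimes^{\myL}_{R_t}\myR\Gamma(\cO_{Y_t})\big) \longrightarrow (R_t)^+/I_t(R_t)^+ .
\]
Precomposing with $R_t \to R_t/I_t$, we conclude that $(I^{\mathrm{Hir}})_t$, which is the kernel of the first composite, is contained in the kernel of $R_t \to (R_t)^+/I_t(R_t)^+$, namely $I_t(R_t)^+ \cap R_t = (I_t)^+$. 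Since $R_t$ is of finite type over a field of characteristic $p>0$ it has a test element, so plus closure is contained in tight closure, $(I_t)^+ \subseteq (I_t)^*$ \cite{HHmain}; hence $(I^{\mathrm{Hir}})_t \subseteq (I_t)^*$ for all $t \in U$, which is exactly the claimed containment $I^{\mathrm{Hir}} \subseteq I^*$ (and the stronger containment in plus closure).

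The only non-formal ingredient is Bhatt's theorem; everything else is bookkeeping. The step most deserving of care is the spreading-out: one must arrange that the alteration $\pi_A$, the complex $\myR\Gamma(\cO_{Y_A})$, and the map defining $I^{\mathrm{Hir}}$ all base change compatibly over a single dense open $U$, so that "$(I^{\mathrm{Hir}})_t$" makes sense for $p \gg 0$ in the sense of \autoref{subsec.ReductionModP}. This is standard but is where all the real hypotheses (excellence, dualizing complex, finite type) get used.
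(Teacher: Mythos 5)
Your approach is the same as the paper's: replace the Koszul complex in the proof of \autoref{prop.KHContainedInPlusModP} by $R/I$, use Bhatt's theorem to factor $R_t \to \myR\Gamma(\cO_{Y_t}) \to (R_t)^+$, apply $-\otimes^{\myL}R_t/I_t$ and take $H_0$, and conclude. The diagram argument and the identification $H_0(R_t/I_t\otimes^{\myL}(R_t)^+)=(R_t)^+/I_t(R_t)^+$ are both correct.

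However, you wave past exactly the point that the paper's proof is devoted to. You assert that "the formation of $R_A/I_A \otimes^{\myL}_{R_A}\myR\Gamma(\cO_{Y_A})$ together with the defining map $\dots$ all commute with base change to $A/\mathfrak{t}$" and later say "this is standard but is where all the real hypotheses get used." The difficulty is that the derived tensor product $R/I\otimes^{\myL}_R\myR\Gamma(\cO_Y)$ is computed with a projective resolution $P_\bullet\to R/I$, which in general is \emph{infinite}, and you cannot spread an infinite complex over $A$ and control it with a single dense open $U$. This is precisely what distinguishes the Hironaka preclosure from the \KosClos, where $K_{\mydot}(\mathbf{f};R)$ is finite free by construction. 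The paper's proof handles this by observing that $\myR\Gamma(\cO_Y)$ is concentrated in cohomological degrees $[0,d]$, so $H_0\big(P_\bullet\otimes_R\myR\Gamma(\cO_Y)\big)$ only depends on the truncation $P_{\le d+1}$, a finite complex of finitely generated free modules; it is this finite truncation that one spreads out to an $A$-form and base changes, after which the argument of \autoref{prop.KHContainedInPlusModP} applies verbatim. Without that truncation observation the base-change claim you use is unjustified, so your proof has a real (if easily filled) gap at exactly the spot the paper is doing work. (Also a small slip: you cite "\autoref{rem.KosInHironaka}" but the reference in the paper is an equation, \autoref{eq.KosInHironaka}.)
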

 \begin{proof}
      First, note that in order to compute $H_0(R/I\otimes_R^\bL\myR\Gamma(\cO_Y))$ we can truncate a projective resolution of $R/I$ in degree $d+1$ where $d=\dim(R)$. To see this, note that $\myR\Gamma(\cO_Y)$ is equivalent to a bounded complex concentrated in cohomological degree $[0,d]$. If $P\to R/I$ is a projective resolution, then $R/I\otimes_R^\bL\myR\Gamma(\cO_Y)\cong P\otimes_R\myR\Gamma(\cO_Y)$. To compute $H_0(R/I\otimes_R^\bL\myR\Gamma(\cO_Y))$, it suffices to consider 
     \[(P\otimes_R\myR\Gamma(\cO_Y))_1\to (P\otimes_R\myR\Gamma(\cO_Y))_0\to(P\otimes_R\myR\Gamma(\cO_Y))_{-1}\]
     which is equivalent to
     \[\bigoplus_{i=1}^{d+1}P_i\otimes_R\myR\Gamma(\cO_Y))^{i-1}\to \bigoplus_{i=0}^{d}P_i\otimes_R\myR\Gamma(\cO_Y)^i\to\bigoplus_{i=-1}^{d-1}P_i\otimes_R\myR\Gamma(\cO_Y)^{i+1}.\]
     This implies that $H_0(R/I\otimes_R^\bL\myR\Gamma(\cO_Y))=H_0(P_{\leq d+1}\otimes_R\myR\Gamma(\cO_Y))$.

     We now modify the argument of \autoref{prop.KHContainedInPlusModP}. As before, let $R_t$, $I_t$, and $(I^{\Hir})_t$, for $t\in\fm-\Spec A$ denote a family of reduction-to-characteristic $p>0$ models of $R$, $I$, and $I^{\Hir}$ respectively. By enlarging $A$ if necessary, we can assume that a resolution of singularities $\pi\colon Y\to\Spec R$ reduces to a resolution $\pi_t\colon Y_t\to\Spec R_t$ as well. The fact that $H_0(R/I\otimes_R^\bL\myR\Gamma(\cO_Y))$ can be computed with bounded complexes ensures that we can also enlarge $A$ so that $(I^{\Hir})_t=\ker\big(R_t\to H_0(R_t/I_t\otimes_{R_t}^\bL \myR\Gamma(\cO_{Y_t}))\big)$.  The proof of \autoref{prop.KHContainedInPlusModP} now works essentially unchanged
 \end{proof}

\begin{remark}
    \label{rem.M2HironakaPreClosure}
    The Macaulay2 package {\tt KHClosure} provides a command {\tt subHironakaClosure} which computes a subset of the Hironaka preclosure.  This computes $\myR\Gamma(\cO_Y)$ as a complex over $A$, instead of as a complex over $R$,  where $R = A/J$ and $A$ is a polynomial ring.  As we have a map $R/I \otimes_A^{\myL} \myR \Gamma(\cO_Y) \to R/I \otimes_R^{\myL} \myR \Gamma(\cO_Y) $, we can use this to compute a subset of $I^{\Hir}$.  Regardless, we can use this to  easily verify that Hironaka preclosure produces a strictly larger output than \KosClos (see the computation below).

    Additionally, we have a command {\tt hironakaClosure} which computes the Hironaka-pre-closure when we know the multiplier module $\Gamma(\omega_Y) \subseteq \omega_R$ and the ambient ring $R$ is Cohen-Macaulay.  In this case, we compute $\myR\Gamma(\cO_Y)$ over $R$ and not $A$.  This tends to be \emph{much} slower than {\tt subHironakaClosure} or {\tt koszulHironakaClosure}.
   Regardless though, experimental evidence we have considered suggests that Hironaka closure is indeed a closure operation in this context. 
   
    {
        \color{blue}
        \small
        \begin{verbatim}
i2 : A = QQ[x,y,z];  J = ideal(x^5+y^5+z^5);  mA = ideal(x,y,z);

i5 : R = A/J; mR = sub(mA, R);

i7 : I = ideal(x,y); -- a parameter ideal

i8 : I2KH = koszulHironakaClosure(I^2,mA)

             2        2   4     3     3
o8 = ideal (y , x*y, x , z , y*z , x*z )

o8 : Ideal of R

i9 : I2sHir = subHironakaClosure(I^2,mA)

             2        2   3     2     2
o9 = ideal (y , x*y, x , z , y*z , x*z )

o9 : Ideal of R

i10 : I2Hir = hironakaClosure(I^2,mR^3*R^1) --needs multiplier ideal/module of R in this case

              2        2   3     2     2
o10 = ideal (y , x*y, x , z , y*z , x*z )

o10 : Ideal of R

i11 : I2sHir == I2Hir

o11 = true

i12 : isSubset(I2KH, I2Hir)

o12 = true

i13 : member(x*z^2, I2KH) -- KH closure is strictly smaller than Hironaka

o13 = false
        \end{verbatim}    
    }    
\end{remark}
%
%
%
%
%
%

\if0
{\color{red}
\begin{proposition}
    The Hironaka is a well-defined closure operation on ideals of $R$.
\end{proposition}
Let $I$ in $R$ be an ideal. We will show that $I^{\mathrm{Hir}}/I\otimes_R^\bL\myR\Gamma(\cO_Y)$ is both $R/I^{\mathrm{Hir}}$-trivial and $R/I^{\mathrm{Hir}}$-torsion (in the sense of \cite{Dwyer-Greenlees:2002}) which will imply $I^{\mathrm{Hir}}/I\otimes_R^\bL\myR\Gamma(\cO_Y)\cong 0$ and thus that $R/I\otimes_R^\bL\myR\Gamma(\cO_Y)\cong R/I^{\mathrm{Hir}}\otimes_R^\bL\myR\Gamma(\cO_Y)$.

The fact that it is $R/I^{\mathrm{Hir}}$-torsion is immediate from the triangle
\[I^{\mathrm{Hir}}/I\otimes_R^\bL\myR\Gamma(\cO_Y) \to R/I\otimes_R^\bL\myR\Gamma(\cO_Y)\to R/I^{\mathrm{Hir}}\otimes_R^\bL\myR\Gamma(\cO_Y).\]
}
\fi

   Many of the elements from the reduction-mod-$p$ tight closure that we saw were not in the \KosClos in \autoref{exam.KHClosureExampleComputations} can easily be seen to be in the Hironaka closure using the Macaulay2 package.  In particular, the {\tt hironakaClosure} function gives a another way to check that various elements are in the tight closure for all $p \gg 0$ thanks to \autoref{lem.HirIsInPlus}.

\subsection{Canonical alteration closure}

    Our next closure operation is inspired by \cite{Mcd-multklt} in the form of \autoref{thm.Mcd-multklt}.  Indeed, as the $\Gamma( \omega_Y)$ for various alterations $Y \to \Spec R$ computes the multiplier ideal $\sJ(R)$ in the sense of de Fernex-Hacon \cite{DeFernexHaconSingsOnNormal}, it is natural to simply use that family of modules to construct a closure whose test ideal is the multiplier ideal.  We expect this operation to be closer to tight closure in equal characteristic zero, see also \autoref{thm.TightClosureEqualsModuleClosureParameterTestModules} below.


\begin{defn}
\label{def.CanonicalAlterationClosure}
    Suppose $R$ is a normal domain satisfying \autoref{set.NiceSetup} and 
    let $L\subseteq M$ be $R$ modules and $\pi:Y \to \Spec(R)$ be a regular alteration. We set $\cl_\pi$ to be the closure operation
    \[L_M^{\cl_\pi}=L_M^{\cl_{\canon}},\]
    where we are viewing $\canon$ as an $R$-module and this closure as the module closure it defines in the sense of \autoref{def.ModuleClosure}. We define the \emph{canonical alteration closure} of $L$ inside $M$, denoted $L_M^\alt$, to be the  operation
    \[L_M^\alt\colon=\bigcap_{\pitoR} L_M^{\cl_\pi}.\]
    In other words,
    \begin{align*}L_M^\alt&=\bigcap_{\pitoR} \{z \in M \mid s \otimes z \in \im(\canon \otimes_R L \to \canon \otimes_R M) ~\forall~ s \in \canon\}\\
    &=\bigcap_{\pitoR} \{z \in M \mid s \otimes z \in \ker(\canon \otimes_R M \to \canon \otimes_R M/L) ~\forall~ s \in \canon\}.
    \end{align*}

    Recall (See \cite[Proposition 6.2]{EpsteinRGVassilev.ACommonFrameworkForTestIdealsClosure}) that the \emph{meet} $\bigwedge_{\alpha \in \Lambda} \cl_\alpha$ of a collection $\{\cl_\alpha\}_{\alpha \in \Lambda}$ of closure operations is defined for every nested pair of $R$-modules $L \subseteq M$ for which the $\cl_\alpha$ are defined, via $L^{(\bigwedge_\alpha \cl_\alpha)}_M := \bigcap_\alpha L^{\cl_\alpha}_M$.  Hence, $\alt = \displaystyle \bigwedge_{\pi: Y \ra \Spec(R)} \cl_\pi$.
\end{defn}

The choice of the word ``canonical'' in \emph{canonical alteration closure} is meant to emphasize the role of the canonical module.  

\begin{prop}\label{pr:altisaclosure}
    The operation $\alt$ is a well-defined closure operation. Further, $\alt$ is residual, functorial, and, if $R$ is local, faithful.
\end{prop}

\begin{proof}
    The fact that module closures are functorial and residual is contained in \cite[Proposition 7.4]{EpsteinRGVassilev.ACommonFrameworkForTestIdealsClosure}.
    As the modules we are using are finitely generated, the individual $\cl_{\pi}$ operations are faithful for local rings by Nakayama's Lemma.  By \cite[Proposition 6.4]{EpsteinRGVassilev.ACommonFrameworkForTestIdealsClosure}, the meet of residual and functorial closures is a residual and functorial closure operation.  By \cite[Theorem 4.3]{dietzclosureprops}, when $R$ is local, the meet of faithful closure operations is faithful. This finishes the proof. \qedhere
        \end{proof}

Another useful fact is that, at least on Artinian modules, $\alt$ is a module closure (though of what module depends on the pair $L\subseteq M$).

\begin{lemma}
\label{lem:usejustonealteration}
Given $L \subseteq M$ Artinian, there is some $\pitoR$ a regular alteration such that $L_M^\alt=L_M^{\cl_\pi}.$
\end{lemma}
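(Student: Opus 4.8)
The plan is to use the Artinian hypothesis to replace the (a priori infinite) intersection defining $L^\alt_M$ by a finite one, and then to realize that finite intersection as $L^{\cl_\pi}_M$ for a single regular alteration built by taking a disjoint union.

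First I would observe that $M/L$ is Artinian, so the collection of submodules of $M/L$ of the form $\bigl(L^{\cl_{\pi_1}}_M \cap \cdots \cap L^{\cl_{\pi_k}}_M\bigr)/L$, as $\pi_1,\dots,\pi_k$ ranges over finite sets of regular alterations, has a minimal element, coming from some $\pi_1,\dots,\pi_n$. For any further regular alteration $\pi$, the submodule $L^{\cl_{\pi_1}}_M \cap \cdots \cap L^{\cl_{\pi_n}}_M \cap L^{\cl_\pi}_M$ is again of this form and is contained in the chosen minimal one, hence equals it; thus $L^{\cl_{\pi_1}}_M \cap \cdots \cap L^{\cl_{\pi_n}}_M \subseteq L^{\cl_\pi}_M$ for every $\pi$. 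Together with the trivial reverse containment this gives $L^\alt_M = L^{\cl_{\pi_1}}_M \cap \cdots \cap L^{\cl_{\pi_n}}_M$.

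Next I would build a single alteration realizing this intersection. Since $R$ is a (normal) domain, $\Spec R$ is integral, and the disjoint union $Y := Y_1 \sqcup \cdots \sqcup Y_n$ with the induced map $\pi\colon Y \to \Spec R$ is again a regular alteration: $\pi$ is proper and surjective, it is generically finite because the generic fiber is the disjoint union of the (finite) generic fibers of the $\pi_i$, the scheme $Y$ is regular, and every irreducible component of $Y$ dominates $\Spec R$ since each component of each $Y_i$ does. Moreover $\Gamma(Y,\omega_Y) = \bigoplus_{i=1}^n \Gamma(Y_i,\omega_{Y_i})$ as $R$-modules.

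Finally I would invoke the elementary fact that the module closure attached to a finite direct sum is the intersection of the module closures of the summands, i.e. $L^{\cl_{B_1 \oplus \cdots \oplus B_n}}_M = \bigcap_{i=1}^n L^{\cl_{B_i}}_M$ for any $R$-modules $B_i$; this is immediate from the distributivity of $\otimes_R$ over finite direct sums applied to \autoref{def.ModuleClosure}. Taking $B_i = \Gamma(Y_i,\omega_{Y_i})$ then yields $L^{\cl_\pi}_M = L^{\cl_{\Gamma(\omega_Y)}}_M = \bigcap_{i=1}^n L^{\cl_{\pi_i}}_M = L^\alt_M$, as wanted. There is no deep obstacle here; the only point demanding care is checking that the disjoint union genuinely satisfies every clause in the definition of a regular alteration, which is exactly where the domain hypothesis on $R$ is used, guaranteeing that $\Spec R$ is irreducible so that the ``dominates an irreducible component'' condition is automatic.
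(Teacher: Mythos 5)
Your proof is correct, and it takes a genuinely different route from the paper's. The paper shows the family $\{L^{\cl_\pi}_M\}$ is cofiltered by a domination argument: given $\pi_1, \pi_2$, pick $\pi$ dominating both, and because the $Y_i$ are regular (hence derived splinters by Bhatt's theorem), $\Gamma(\omega_{Y_i})$ is a direct summand of $\Gamma(\omega_Y)$, which forces $L^{\cl_\pi}_M \subseteq L^{\cl_{\pi_1}}_M \cap L^{\cl_{\pi_2}}_M$; the Artinian hypothesis then closes the argument via DCC. You instead apply DCC first, to the family of finite intersections, to extract finitely many alterations $\pi_1,\dots,\pi_n$ realizing $L^\alt_M$, and then realize that finite intersection as a single $\cl_\pi$ by taking $Y = Y_1 \sqcup \cdots \sqcup Y_n$ and using the elementary identity $L^{\cl_{B_1 \oplus \cdots \oplus B_n}}_M = \bigcap_i L^{\cl_{B_i}}_M$, which follows directly from the definition. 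Your argument replaces the derived-splinter input with a purely module-theoretic fact about direct sums plus the (easy) check that a disjoint union of regular alterations of an integral scheme is again a regular alteration, so it is more elementary; the paper's approach has the advantage of producing an irreducible $Y$ and makes the cofiltered structure of the system of alterations explicit, which is used elsewhere in the paper. One small remark: the domain hypothesis is not actually needed for the disjoint union to satisfy the dominance clause, since every component of each $Y_i$ already dominates some component of $\Spec R$; but this does no harm.
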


\begin{proof}
    This follows from the fact that any two regular alterations, $\pi_i : X_1, X_2 \to \Spec R$ can be dominated by a third alteration $\pi: Y \to \Spec R$.  
    \[
        \xymatrix{
            Y \ar[d] \ar[dr]^{\pi} \ar[r] & X_1 \ar[d]^-{\pi_1} \\
            X_2 \ar[r]_-{\pi_2} & \Spec R
        }
    \]

We then claim that $L_M^{\cl_\tau}\subseteq L_M^{\cl_{\pi_1}}\cap L_M^{\cl_{\pi_2}}$. To see this, we note that both $X_1$ and $X_2$ are smooth and thus are derived splinters in the sense of \cite{BhattDerivedDirectSummandCharP} and so $\Gamma(\omega_{X_1})$ and $\Gamma(\omega_{X_2})$ are both summands of $\Gamma(\omega_Y)$. Thus, if $z\in M$ is such that $z\otimes c$ is in the image of $L\otimes_R\Gamma(\omega_Y)\to M\otimes_R\Gamma(\omega_Y)$ for all $c\in\Gamma(\omega_Y)$ we see that this must also hold for all $c\in\Gamma(\omega_{X_1})$ and also for all $c\in\Gamma(\omega_{X_2})$ and we get the containment claimed. This proves the result, as $M$ is Artinian.
\end{proof}

\begin{prop}\label{contain}
    For a domain $R$ satisfying \autoref{set.NiceSetup}, let $I$ be an ideal of $R$. Then $I^{\Kos}\subseteq I^{\mathrm{Hir}}\subseteq I^{\alt}$. If $L\subseteq M$ are $R$-modules then $L_M^{\mathrm{Hir}}\subseteq L_M^\alt$.
\end{prop}
\begin{proof}
    We already observed that $I^{\KH} \subseteq I^{\Hir}$ in equation \autoref{eq.KosInHironaka}.

    Now consider the pair of $R$-modules $L\subseteq M$ and let $\pi:Y\to\Spec R$ be a regular alteration. For each $z\in\Gamma(\omega_Y)$, consider the map $\psi_z\colon \myR \Gamma(\cO_Y)\to\myR \Gamma(\omega_Y)$ induced by $\cO_Y \xrightarrow{1 \mapsto z} \omega_Y$.
    Then consider
    \[\begin{tikzcd}
        M\arrow[r]&H_0(M/L\otimes_R^\bL \myR \Gamma(\cO_Y))\arrow[rr,"M/L\otimes\psi_z"]&& M/L\otimes_R \Gamma(\omega_Y)
    \end{tikzcd}
    \]
    which sends $m\mapsto\bar{m}\otimes z$. Thus we see that $L^{\mathrm{Hir}}_M=\ker(M\to H_0(M/L\otimes_R^\bL \myR \Gamma(\cO_Y))\subseteq\bigcap_{z\in\Gamma(\omega_Y)}\ker(M\to M/L\otimes_R \Gamma(\omega_Y))=L^{\cl_\pi}_M$. Thus, $L^{\mathrm{Hir}}_M\subseteq L^\alt_M$ and we are done.
\end{proof}

\begin{remark}
    As \KosClos satisfies colon capturing, and strong colon capturing version A, so do Hironaka and alterations closures.  
Indeed, by \autoref{thm.cc} and \autoref{contain}, we obtain colon-capturing:
\[
(x_1,\dots,x_k):x_{k+1}\subseteq(x_1,\dots,x_k)^{\Kos}\subseteq(x_1,\dots,x_k)^{\mathrm{Hir}}\subseteq(x_1,\dots,x_k)^\alt.
\]
The proof that they satisfy strong colon-capturing version A is similar.  We also immediately see that it satisfies a version of the Brian\c{c}on-Skoda property as we have
\[
    \overline{J^m} \subseteq J^{\Kos} \subseteq J^{\Hir} \subseteq J^{\alt}
\]
if $J$ is $m$-generated.
\end{remark} 

We prove the next result by the method of \cite[Theorem 3.12]{perezrg}.

\begin{thm}
\label{thm:alttestidealisJ}
Let $R$ be normal, local, and satisfying \autoref{set.NiceSetup}. Then the multiplier ideal of de Fernex-Hacon satisfies $\sJ(R)=\tau_{\alt}(R)$.  As a consequence, $R$ is KLT if and only if all modules are $\alt$-closed.
\end{thm}

\begin{proof}
    First, we know by Theorem \ref{thm:testistrace} that $\tau_\alt(R)=\ann_R 0_E^\alt$. So it suffices to show that
    \[\ann_R 0_E^\alt=\sum_{\pitoR} \tr_{\canon}(R).\]

    First we discuss alternate ways to write $0_E^\alt$. It follows from the definition that
    \[0_E^\alt= \bigcap_{\pitoR} \bigcap_{s \in \canon} \ker(E \to \canon \otimes_R E), \]
    where the maps $E \to \canon \otimes_R E$ send $z \mapsto s \otimes z$.
    Since $E$ is Artinian, there must exist finitely many regular alterations $\pitoR$, say $\pi_1,\ldots,\pi_t$ and for each $i$, finitely many $s \in \Gamma(\omega_{Y_i})$, say $s_{i1},\ldots,s_{ik_i}$ such that
    \[0_E^\alt=\bigcap_{i=1}^t \bigcap_{j=1}^{k_i} \ker(E \to \Gamma(\omega_{Y_i}) \otimes_R E),  \]
    where the $i,j$th map sends $z \mapsto s_{ij} \otimes z$.
    Define 
    \[\phi:E \to \bigoplus_i \bigoplus_j \Gamma(\omega_{Y_i}) \otimes_R E\]
    to be the map sending
    \[z \mapsto \bigoplus_j \bigoplus_i s_{ij} \otimes z.\]
    Then by the above, $0_E^\alt=\ker \phi$.

    Now we prove that $\ann_R 0_E^\alt=\sum_{\pitoR} \tr_{\canon}(R)$. First, let $c \in \ann_R 0_E^\alt$, so that 
    \[0=c 0_E^\alt=c \ker \phi.\]
    Then $0_E^\alt \subseteq \ann_E c$.
    Taking the Matlis dual of this inclusion, we get a surjection
    \[\hat{R}/c\hat{R}=\Hom_{\hat{R}}(\ann_E c,E) \twoheadrightarrow \Hom_{\hat{R}}(0_E^\alt,E).\]
    We also have an exact sequence
    \[0 \to \ker \phi \to E \xrightarrow{\phi} \bigoplus_j \bigoplus_i \Gamma(\omega_{Y_i}) \otimes_R E.\]
    Taking the Matlis dual of this exact sequence, we get
    \[\Hom_{\hat{R}}(0_E^\alt,E)=\Hom_{\hat{R}}(\ker \phi,E)={\coker} \phi^\vee.\]
    We rewrite the latter as 
    \[\frac{\hat{R}}{\sum_i \sum_j \im\left(\Hom_{\hat{R}}(\Gamma(\omega_{Y_i}) \otimes_R \hat{R},\hat{R}) \to \hat{R} \right) },\]
    where the $i,j$th map sends $\psi \mapsto \psi(s_{ij})$.
    As we have a surjection 
    \[\hat{R}/c\hat{R} \twoheadrightarrow \Hom_{\hat{R}}(0_E^\alt,E),\]
    this implies that
    \[c\hat{R} \subseteq \sum_i \sum_j \im\left(\Hom_{\hat{R}}(\Gamma(\omega_{Y_i}) \otimes_R \hat{R},\hat{R}) \to \hat{R} \right).\]
    Since the $\Gamma(\omega_{Y_i})$ are \fg, Hom here commutes with flat base change, so the above is equal to
    \[\left(\sum_i \sum_j \im\left(\Hom_{R}(\Gamma(\omega_{Y_i}),R) \to R\right)\right) \otimes_R \hat{R}.\]
    Since completion is faithfully flat (and hence pure), this implies that
    \[c \in \sum_i \sum_j \im\left(\Hom_R(\Gamma(\omega_{Y_i}),R) \to R \right),\]
    which is certainly contained in
    \[\sum_\pi \tr_{\canon}(R).\]

    For the other inclusion, suppose $c \in \sum_{\pitoR} \tr_{\canon}(R)$. Then there exist regular alterations $\pi_1',\ldots,\pi_{\ell}'$ and $s'_{i1},\ldots,s'_{im_i}$ such that
    \[c \in \sum_i \sum_j \im(\Hom_R(\Gamma(\omega_{Y'_i}),R) \to R),\]
    where the $i,j$ the map sends $\psi \mapsto \psi(s'_{ij})$. Enlarge the sets of $s_{ij},\pi_i$ from the first part of the proof to include these. Then
    \[c \in \sum_i \sum_j \im(\Hom_R(\Gamma(\omega_{Y_i}),R) \to R).\]
    This implies that there is a surjection
    \[R/cR \twoheadrightarrow \frac{R}{\sum_i \sum_j \im(\Hom_R(\Gamma(\omega_{Y_i}),R) \to R)}.\]
    Taking Matlis duals, we get
    \[\Hom_R\left(\frac{R}{\sum_i \sum_j \im(\Hom_R(\Gamma(\omega_{Y_i}),R) \to R)},E \right) \hookrightarrow \Hom_R(R/cR,E)=\ann_E c.\]
    Further, the left hand side is equal to $0_E^\alt$. Hence $c \in \ann_R 0_E^\alt$.

    For the final statement, note that $R$ is KLT if and only if $\mJ(R) = R$.
\end{proof}

We list some further properties of the multiplier ideal/submodule for a particular alteration:

\begin{lemma}
    For a given alteration $\pitoR$, the following holds:
    \begin{align*}
        \sJ_\pi(R) &=\tr_{\canon}(R)=\tau_{\cl_\pi}(R).
    \end{align*}
\end{lemma}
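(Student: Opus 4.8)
Both equalities are formal consequences of definitions and results already recorded in the excerpt; the plan is simply to unwind them in the right order. For the first equality, I would appeal directly to \autoref{def:multsubmod} with $M = R$: by definition $\sJ_\pi(R) = \Image\big(\Hom_R(\canon,R)\otimes_R\canon \to R\big)$, and the image of the evaluation map $\Hom_R(\canon,R)\otimes_R\canon \to R$ is exactly $\sum_{\phi \in \Hom_R(\canon,R)}\phi(\canon)$, which is the trace ideal $\tr_{\canon}(R)$ by definition of the trace ideal. So $\sJ_\pi(R) = \tr_{\canon}(R)$ with no work.

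For the second equality I would first recall that, by \autoref{def.CanonicalAlterationClosure}, $\cl_\pi$ is precisely the module closure $\cl_{\canon}$ associated to the $R$-module $\canon = \Gamma(Y,\omega_Y)$ in the sense of \autoref{def.ModuleClosure}. Then the key point is that $\canon$ is a \emph{finitely presented} $R$-module: since $\pi: Y \to \Spec R$ is proper, $\pi_*\omega_Y$ is coherent, so $\Gamma(Y,\omega_Y)$ is a finitely generated module over the Noetherian ring $R$, hence finitely presented. With this in hand, \autoref{thm:testistrace}(2) applies verbatim and yields $\tau_{\cl_\pi}(R) = \tau_{\cl_{\canon}}(R) = \sum_{f\in\Hom_R(\canon,R)} f(\canon) = \tr_{\canon}(R)$. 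Combining the two displayed identities gives $\sJ_\pi(R) = \tr_{\canon}(R) = \tau_{\cl_\pi}(R)$.

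The main (very mild) obstacle is purely bookkeeping: making sure the hypotheses of \autoref{thm:testistrace}(2) are met, i.e., that $\canon$ is finitely presented (immediate from coherence of proper pushforward) and that we are in the local setting in which that theorem is stated — both of which are already in force in this subsection. There is no genuine difficulty here; the content has been front-loaded into \autoref{thm:testistrace} and \autoref{def.CanonicalAlterationClosure}.
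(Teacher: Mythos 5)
Your proposal is correct and matches the paper's own (very terse) proof, which simply cites \autoref{def:multsubmod} for the first equality and \autoref{thm:testistrace} for the second. Your added remark that $\canon$ is finitely presented (coherence of proper pushforward over a Noetherian ring) is exactly the right justification for invoking \autoref{thm:testistrace}(2), so there is nothing to correct.
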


\begin{proof}
    Definition \ref{def:multsubmod} gives us the first equality. The other equality follows from Theorem \ref{thm:testistrace}. 
\end{proof}

\begin{cor}
    If $\tau : Z \to \Spec R$ is a regular alteration further along the inverse limit system than $\pi : Y \to \Spec R$, then $\sJ_\tau(M) \subseteq \sJ_\pi(M)$ for any $R$-module $M$.
\end{cor}

\begin{proof}
    Let $\kappa:Z\to Y$ be such that $\tau=\pi\circ\kappa$. Since $Y$ is smooth and thus a derived splinter by \cite[Theorem 2.12]{BhattDerivedDirectSummandCharP}, we get that $\omega_Y\to\kappa_*\omega_Z$ splits. Taking global sections implies we have a surjection $\Gamma(\omega_Z)\twoheadrightarrow\Gamma(\omega_Y)$. From \cite[Proposition 2.8.1]{lindo} or \cite[Proposition 2.27.5]{perezrg}, we see that $\tr_{\canon}(M) \subseteq \tr_{\Gamma(\omega_Z)}(M)$ for any $R$-module $M$.
\end{proof}

{%
Using the fact that $(H^{-i} \omega_R^{\mydot})^{\vee} \cong H^i_{\m}(R)$, we obtain a pairing $\omega_R \times H^d_{\m}(R) \to E$, see for instance \cite[Theorem 6.7]{HartshorneLocalCohomology} or \cite[Section 2]{SmithFRatImpliesRat}.  This pairing is perfect if $R$ is complete.  Via this pairing, we can define $\Ann_{\omega_R} N$ for any submodule $N \subseteq H^d_{\m}(R)$ as in \cite[Section 2]{SmithFRatImpliesRat}, see also \cite{jiangrg}.
\begin{theorem}
    \label{thm.JOmegaCanonicalAltDual}
    Suppose $R$ is an excellent domain with a dualizing complex.  Suppose $\m \subseteq R$ is maximal and $M = H^{d}_{\m}(R)$ where $d = \dim R_{\m}$.  Then 
    \[
        (\mJ(\omega_{R})_{\m})^{\vee} \cong M \big/ 0_M^{\alt}.
    \]
    where $\vee$ denotes Matlis duality.  In particular, $\Ann_{\omega_R} 0_M^{\alt} = \mJ(\omega_R)$.  Furthermore, $0_M^{\alt} = 0_M^{\cl_\pi}$ for any regular alteration $\pi : Y \to \Spec R$.  
\end{theorem}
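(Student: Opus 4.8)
The plan is to adapt the proof of \autoref{thm:alttestidealisJ}, which computes $\Ann_R 0^{\alt}_E$, to the ``parameter'' setting by replacing the injective hull $E$ with $M = H^{d}_{\m}(R)$ and replacing $R$ with $\omega_R$ via local duality. We may assume $R$ is complete local with maximal ideal $\m$ and $\dim R = d$: localizing at $\m$ and $\m$-adically completing changes none of the objects in the statement, since $H^{d}_{\m}(-)$, $\omega_{(-)}$, $\mJ(\omega_{(-)})$, and the $\Gamma(\omega_Y)$ all commute with this operation and Matlis duality is insensitive to it. Then $M$ is Artinian, $M^{\vee} \cong \omega_R$ by local duality, and the pairing $\omega_R \times M \to E$ is perfect, so $\Ann$ is an inclusion-reversing bijection between submodules of $\omega_R$ and submodules of $M$ carrying finite intersections to finite sums.

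First I would dualize the kernel defining $0^{\cl_\pi}_M$ for a fixed regular alteration $\pi : Y \to \Spec R$, writing $B := \Gamma(\omega_Y)$. By \autoref{def.ModuleClosure}, $0^{\cl_\pi}_M = \ker\!\big(M \to (M \otimes_R B)^{\oplus n}\big)$, where $m \mapsto (m\otimes b_1, \dots, m\otimes b_n)$ for generators $b_1, \dots, b_n$ of $B$. Applying the exact functor $(-)^{\vee}$ and the adjunction $(M \otimes_R B)^{\vee} \cong \Hom_R(B, M^{\vee})$, each map $m \mapsto m\otimes b_i$ dualizes to evaluation at $b_i$, and one reads off
\[
 \big(M/0^{\cl_\pi}_M\big)^{\vee} \;\cong\; \tr_{B}\!\big(M^{\vee}\big) \;=\; \tr_{\Gamma(\omega_Y)}(\omega_R) \;=\; \sJ_{\pi}(\omega_R),
\]
the last equality being \autoref{def:multsubmod}; under the pairing this reads $0^{\cl_\pi}_M = \Ann_M \sJ_{\pi}(\omega_R)$. (This is the parameter-test-module analogue of \autoref{thm:testistrace}(1), obtained by the same manipulation used in \autoref{thm:alttestidealisJ}.)

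Next I would pass to the meet. Because $M$ is Artinian the intersection $0^{\alt}_M = \bigcap_{\pi} 0^{\cl_\pi}_M$ is realized by finitely many alterations, and because $\omega_R$ is Noetherian the sum $\sJ(\omega_R) = \sum_{\pi}\sJ_{\pi}(\omega_R)$ is realized by finitely many; enlarging to a common finite family and applying $\Ann$ gives
\[
 \big(M/0^{\alt}_M\big)^{\vee} \;=\; \sum_{\pi}\sJ_{\pi}(\omega_R) \;=\; \sJ(\omega_R) \;=\; \mJ(\omega_R),
\]
where the final equality identifies the de Fernex--Hacon multiplier submodule of $\omega_R$ with the Grauert--Riemenschneider sheaf (see \autoref{prop.TestIdealForKosClosure} and the discussion after \autoref{thm.Mcd-multklt}). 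Dualizing back, with both sides Artinian, yields $M/0^{\alt}_M \cong \mJ(\omega_R)^{\vee} = (\mJ(\omega_R)_{\m})^{\vee}$, while the identity $0^{\alt}_M = \Ann_M\mJ(\omega_R)$ gives $\Ann_{\omega_R} 0^{\alt}_M = \mJ(\omega_R)$ by double annihilation in the perfect pairing.

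For the last assertion I would upgrade $0^{\cl_\pi}_M = \Ann_M\sJ_{\pi}(\omega_R)$ to $0^{\cl_\pi}_M = \Ann_M\mJ(\omega_R)$ for \emph{every} regular alteration $\pi$, which amounts to $\sJ_{\pi}(\omega_R) = \mJ(\omega_R)$; this follows by sandwiching, since the image of the canonical map $\Gamma(\omega_Y)\to\omega_R$ equals $\mJ(\omega_R)$ and is one of the summands defining $\tr_{\Gamma(\omega_Y)}(\omega_R) = \sJ_{\pi}(\omega_R)$, while $\sJ_{\pi}(\omega_R)$ is in turn one of the summands of $\sJ(\omega_R) = \mJ(\omega_R)$. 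Then $0^{\alt}_M = \bigcap_{\pi} 0^{\cl_\pi}_M = \Ann_M\mJ(\omega_R) = 0^{\cl_\pi}_M$ for each $\pi$. The main obstacle is bookkeeping rather than conceptual: keeping the Matlis-duality identifications canonical enough that $(M/0^{\cl_\pi}_M)^{\vee}$ is realized as the honest submodule $\tr_{\Gamma(\omega_Y)}(\omega_R)\subseteq\omega_R$, and checking the compatibility of the finite truncations of $\bigcap_\pi$ and $\sum_\pi$; the only external input is the standard equality of $\sJ(\omega_R)$ with the Grauert--Riemenschneider sheaf together with its insensitivity to the choice of regular alteration.
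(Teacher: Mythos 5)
Your plan is structurally the same as the paper's: Matlis-dualize the module-closure kernel $0^{\cl_\pi}_M$ to land on a trace submodule of $\omega_R$, pass to the meet via the Artinian/Noetherian finiteness, and then identify what you get with $\mJ(\omega_R)$. The dualization step itself — $(M/0^{\cl_\pi}_M)^\vee \cong \tr_{\Gamma(\omega_Y)}(\omega_R) = \sJ_\pi(\omega_R)$ via Hom-tensor adjunction and local duality — is correct and is essentially what the paper does, just run in the opposite direction.

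The gap is in your ``sandwiching'' step, where everything is actually concentrated. You want $\sJ_\pi(\omega_R) = \mJ(\omega_R)$ for \emph{every} regular alteration $\pi$, and you deduce it from two facts you label as external input: that the trace image of $\Gamma(\omega_Y)$ in $\omega_R$ equals $\mJ(\omega_R)$, and that $\sJ(\omega_R) = \mJ(\omega_R)$. The first is genuinely a citation (\cite[Theorem 8.1]{BlickleSchwedeTuckerTestAlterations}, as the paper notes). But the second is not standard in the form you need, and you can see why once you unwind it: $\sJ(\omega_R) = \sum_\pi \tr_{\Gamma(\omega_Y)}(\omega_R)$ is a sum over \emph{all} $\phi \in \Hom_R(\Gamma(\omega_Y), \omega_R)$, not just the trace map, so to control it from above you need to know that $\Hom_R(\Gamma(\omega_Y), \omega_R)$ is generated over $S := \Gamma(\cO_Y)$ (the Stein factorization) by the single trace map, so that every $\phi$ is an $S$-multiple of trace and its image still lands in $\mathrm{Tr}(\Gamma(\omega_Y)) = \mJ(\omega_R)$. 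That in turn rests on two structural observations the paper makes explicitly: $\Hom_R(\omega_S, \omega_R) \cong S$ because $S$ is normal (hence S2), and $\Gamma(\omega_Y) \subseteq \omega_S$ agrees with $\omega_S$ in codimension one so that $\Hom_R(\Gamma(\omega_Y), \omega_R) \cong S$ as well. Without this cyclicity, your sandwich gives only $\mJ(\omega_R) \subseteq \sJ_\pi(\omega_R) \subseteq \sJ(\omega_R)$ with the right end not a priori equal to $\mJ(\omega_R)$ — you cannot simply declare $\sJ(\omega_R) = \mJ(\omega_R)$, since that \emph{is} the inequality $\sJ_\pi(\omega_R) \subseteq \mJ(\omega_R)$ summed over $\pi$. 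A secondary, smaller issue: your reduction to the complete case discards the statement about honest $R$-submodules of $\omega_R$; the paper instead works over the local ring $R$ and descends from $\widehat{R}$ at the end using faithful flatness and purity, which you would need to spell out.
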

\begin{proof}
    Without loss of generality we may assume that $R$ is local.

    For any regular alteration $\pi : Y \to \Spec R$, set $S$ to be the ring of the associated Stein factorization so that $R \subseteq S$ is finite and $S$ is normal. 

    We have that $\Hom_R(\omega_S, \omega_R) \cong S$ since $S$ is normal and hence S2 (see for instance \cite[Section 1]{HartshonreGeneralizedDivisorsAndBiliaison} and observe that $\Hom_R(\Hom_R(S, \omega_R), \omega_R)$ is the S2-ification of $S$ and that $\Hom_R(S, \omega_R) \cong \omega_S$). Note that $\Hom_R(\omega_S, \omega_R)$ is generated as an $S$-module by the trace map $\mathrm{Tr} : \omega_S \to \omega_R$.  Furthermore, as $S$ is normal, $\Gamma(\omega_Y) = \mJ(\omega_S) \subseteq \omega_S$ agrees with $S$ in codimension 1, and so $\Hom_R(\Gamma(\omega_Y), \omega_R) \cong S$ as well.  Now $\mJ(\omega_R) = \mathrm{Tr}(\Gamma(\omega_Y))$ for any regular alteration $Y$ by \cite[Theorem 8.1]{BlickleSchwedeTuckerTestAlterations} (set $\Delta_X = -K_X$ which forces $\Delta_Y = -K_Y$).  It immediately follows that $\mJ(\omega_R) = \sum_{\phi} \Image(\Gamma(\omega_Y) \xrightarrow{\phi} \omega_R)$ as $\mathrm{Tr}$ is in that sum, and all other maps $\phi \in \Hom_R(\omega_S, \omega_R)$ are multiples of trace. 

    We see that 
    \[
        \begin{array}{rcl}
            \mJ(\omega_R) & = & \sum_{\phi} \Image\big(\Gamma(\omega_Y) \xrightarrow{\phi} \omega_R\big)\\
            & = & \Image\big( \Hom_R(\Gamma(\omega_Y), \omega_R) \otimes_R \Gamma(\omega_Y) \xrightarrow{\phi \otimes y \mapsto \phi(y)} \omega_R \big)\\
            & = & \sum_{y} \Image\big( \Hom_R(\Gamma(\omega_Y), \omega_R) \xrightarrow{\phi \mapsto \phi(y)} \omega_R\big)\\
            & = & \Image\big( \bigoplus_{y} \Hom_R(\Gamma(\omega_Y), \omega_R) \xrightarrow{\phi \mapsto \phi(y)} \omega_R\big)
        \end{array}
    \]
    where $\phi$ in the first sum runs over elements of $\Hom_R(\Gamma(\omega_Y), \omega_R)$ and the $y$ in the later sums runs over $y \in \Gamma(\omega_Y)$.  

    As $\omega_R$ is the bottom cohomology of $\omega_{R}^{\mydot}$, we see that 
    \[ 
        \Hom_R(\Gamma(\omega_Y), \omega_R)^{\vee} \cong (H^{-d} \myR \Hom_R(\Gamma(\omega_Y), \omega_R^{\mydot}))^{\vee} \cong H^d_{\m}(\Gamma(\omega_Y))
    \]
    where the second isomorphism is local duality.
    Therefore, we see that 
    \[
        \begin{array}{rcl}
        \mJ(\omega_R)^{\vee} & = & \Image\big( \oplus_{y} \Hom_R(\Gamma(\omega_Y), \omega_R)  \to \omega_R \big)^{\vee} \\
        & \cong & \Image\Big( H^d_{\m}(R) \to \prod_{y} H^d_{\m}(\Gamma(\omega_Y)) \Big).
        \end{array}
    \]
    We know $\mJ(\omega_R)^{\vee}$ independent of the choice of $Y$, and so is the kernel of the map is as well.  That is $\ker(H^d_{\m}(R) \to \prod_y H^d_{\m}(\Gamma(\omega_Y))$ is independent of $Y$, and so also coincides with 
    \[ 
        \ker\big(H^d_{\m}(R) \to \prod_Y \prod_y H^d_{\m}(\Gamma(\omega_Y))\big)
    \] 
    where the outer product runs over regular alterations $Y \to \Spec R$.

    Now, as $H^d_{\m}(\Gamma(\omega_Y)) \cong H^d_{\m}(R) \otimes_R \Gamma(\omega_Y)$, we see that $0_M^{\alt}$ is simply 
    \[
        \ker \Big( H^d_{\m}(R) \to \prod_{Y} \prod_{y} H^d_{\m}(R) \otimes_R \Gamma(\omega_Y) \Big) = \ker \Big( H^d_{\m}(R) \to \prod_{Y} \prod_{y} H^d_{\m}(\omega_Y) \Big) = \ker \Big( H^d_{\m}(R) \to \prod_{y} H^d_{\m}(\omega_Y) \Big)
    \]
    where $Y$ runs over alterations and $y$ over elements of $\Gamma(\omega_Y)$.
    This proves the first statement and the fact that $0_M^{\alt}  = 0_M^{\cl_\pi}$.

    For the statement about annihilators, we observe our work so far implies that $(\omega_R / \mJ(\omega_R))^{\vee} = 0_M^{\alt}$.  
    Then by \cite[2.1 Lemma (i)]{SmithTestIdealsInLocalRings} (we do not need Cohen-Macaulay or S2 for this statement) or \cite[Proposition 4.2]{jiangrg} (see also \cite[Remark 5.3]{Hara.GeometricInterpretationOfTightClosureAndTestIdeals}), we have that $(0_M^{\alt})^{\vee} = \omega_{\widehat{R}} / \Ann_{\omega_{\widehat{R}}} 0_M^{\alt}$.  Hence, as we have a natural isomorphism $N^{\vee\vee} \cong N \otimes_R \widehat{R}$ for $N$ finitely generated, we see that
    \[
        (\omega_R / \mJ(\omega_R))^{\vee\vee} \cong \omega_{\widehat{R}} / \Ann_{\omega_{\widehat{R}}} 0_M^{\alt} \;\;\; \text{ and so } \;\;\; \mJ(\omega_R) \otimes_R \widehat{R} = \Ann_{\omega_{\widehat{R}}} 0_M^{\alt}
    \]
    where we view $\mJ(\omega_R) \otimes_R \widehat{R} \subseteq \omega_{R} \otimes_R \widehat{R} = \omega_{\widehat{R}}$.  It immediately follows that $\mJ(\omega_R) \subset \Ann_{\omega_R} 0_M^{\alt}$.  But $R \to \widehat{R}$ is faithfully flat and hence pure so $\ker (\omega_R \to {\omega_{R} \otimes_R \widehat{R} \over \mJ(\omega_R) \otimes_R \widehat{R}}) = \mJ(\omega_R)$ and therefore $\Ann_{\omega_R} 0^{\alt}_M = \mJ(\omega_R)$ as desired.
\end{proof}

\begin{corollary}
    \label{cor.ClosuresAreAllEqualForFullParameterIdeals}
    Suppose $(R, \m)$ is a $d$-dimensional Noetherian local domain satisfying \autoref{set.NiceSetup} and $J = (x_1, \dots, x_d) \subseteq \m$ is a full parameter ideal.  Then $J^{\Kos} = J^{\Hir} = J^{\alt} = J^{\cl_\pi}$ where $\pi : Y \to \Spec R$ is any regular alteration.
\end{corollary}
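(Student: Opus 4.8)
The plan is to reduce everything to the single containment $J^{\cl_\pi}\subseteq J^{\Kos}$. From \autoref{contain}, together with the definition of $\alt$ as a meet of the $\cl_\pi$, we always have the chain
\[
    J^{\Kos}\ \subseteq\ J^{\Hir}\ \subseteq\ J^{\alt}\ \subseteq\ J^{\cl_\pi}
\]
for every regular alteration $\pi\colon Y\to\Spec R$. Hence, once $J^{\cl_\pi}\subseteq J^{\Kos}$ is proved for a single $\pi$, this chain collapses to a string of equalities; and since the argument I have in mind is uniform in $\pi$, it then gives $J^{\Kos}=J^{\Hir}=J^{\alt}=J^{\cl_\pi}$ for \emph{every} regular alteration $\pi$ at once (which is why the statement can say ``any regular alteration'').

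To get $J^{\cl_\pi}\subseteq J^{\Kos}$, I would first apply \autoref{prop.KHClosureOfFullParameterIdeal} (legitimate here since a domain is equidimensional) to write $J^{\Kos}=\ker\big(R\xrightarrow{\,1\mapsto[1+J]\,}H^d_\m(\myR\Gamma(\cO_Y))\big)$, a map which factors through the natural map $\theta\colon H^d_\m(R)\to H^d_\m(\myR\Gamma(\cO_Y))$, and in which $[1+J]\in H^d_\m(R)$ is the \Cech class of $1/(x_1\cdots x_d)$. Then I would Matlis-dualize, reusing the bookkeeping already carried out in the proofs of \autoref{prop.TestIdealForKosClosure} and \autoref{thm.JOmegaCanonicalAltDual}. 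It is cleanest to first pass to the completion, since both $J^{\Kos}$ and $J^{\cl_\pi}=J\Gamma(\omega_Y):_R\Gamma(\omega_Y)$ commute with completion (by \autoref{prop.FlatMapsWithRationalSingularitiesFibers} and flat base change of colon ideals, using that $Y\times_R\widehat R\to\Spec\widehat R$ is again a regular alteration), so that Matlis duality is a perfect duality. Because $\myR\Gamma(\cO_Y)$ is a locally Cohen-Macaulay complex (\autoref{lem.CMComplexFromResolution}), $H^i_\m(\myR\Gamma(\cO_Y))=0$ for $i\ne d$; Grothendieck duality for $\pi$ together with Grauert--Riemenschneider vanishing ($\myR^i\pi_*\omega_Y=0$ for $i>0$) give $\myR\Hom_R(\myR\Gamma(\cO_Y),\omega_R^{\mydot})\simeq\myR\Gamma(Y,\omega_Y^{\mydot})\simeq\Gamma(\omega_Y)[d]$, and then local duality yields $H^d_\m(\myR\Gamma(\cO_Y))\cong\Gamma(\omega_Y)^\vee$. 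Under this identification and the standard $H^d_\m(R)\cong\omega_R^\vee$, the map $\theta$ becomes $\rho^\vee$, where $\rho\colon\Gamma(\omega_Y)\to\omega_R$ is the natural (trace) map whose image is $\mJ(\omega_R)$ --- for a resolution with $R$ normal, simply the inclusion of the Grauert--Riemenschneider/multiplier module.

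With these identifications in hand, the rest is a short colon-ideal manipulation. Let $\epsilon\colon\omega_R\to E$ be the map Matlis-dual to $R\xrightarrow{1\mapsto[1+J]}H^d_\m(R)=\omega_R^\vee$, i.e.\ the functional corresponding to the class $1/(x_1\cdots x_d)$. Since $x_i\cdot[1/(x_1\cdots x_d)]=0$ in $H^d_\m(R)$ for each $i$, we get $J\epsilon=0$, hence $J\omega_R\subseteq\ker\epsilon$. The image of $1$ under $R\xrightarrow{\psi}H^d_\m(\myR\Gamma(\cO_Y))=\Gamma(\omega_Y)^\vee$ is then the composite $\epsilon\circ\rho\colon\Gamma(\omega_Y)\xrightarrow{\rho}\omega_R\xrightarrow{\epsilon}E$, so $J^{\Kos}=\ker\psi=\{x\in R:\ x\,\Gamma(\omega_Y)\subseteq\rho^{-1}(\ker\epsilon)\}$. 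Finally $\rho(J\Gamma(\omega_Y))=J\rho(\Gamma(\omega_Y))\subseteq J\omega_R\subseteq\ker\epsilon$, so $J\Gamma(\omega_Y)\subseteq\rho^{-1}(\ker\epsilon)$, whence $J^{\cl_\pi}=J\Gamma(\omega_Y):_R\Gamma(\omega_Y)\subseteq J^{\Kos}$ by \autoref{eq.ModuleClosureForIdeals}. This is the desired containment, and combined with the chain above it finishes the proof.

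The hard part is the duality bookkeeping in the middle paragraph: identifying $H^d_\m(\myR\Gamma(\cO_Y))$ with $\Gamma(\omega_Y)^\vee$ and recognizing $\theta$ as $\rho^\vee$. However, this is essentially done already in the proofs of \autoref{prop.TestIdealForKosClosure} and \autoref{thm.JOmegaCanonicalAltDual}, so in practice it amounts to quoting and reassembling those arguments. A minor additional point is that the statement allows $R$ to be non-normal; in that case one reads $\Gamma(\omega_Y)$ as an $R$-module via the Stein factorization $Y\to\Spec S\to\Spec R$ and takes $\rho$ to be the inclusion $\Gamma(\omega_Y)\subseteq\omega_S$ followed by the trace $\omega_S\to\omega_R$. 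Since the final step uses only $\rho(J\Gamma(\omega_Y))\subseteq J\omega_R$, injectivity of $\rho$ is never needed.
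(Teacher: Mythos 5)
Your proof is correct and arrives at the same overall strategy as the paper: establish the chain $J^{\Kos}\subseteq J^{\Hir}\subseteq J^{\alt}\subseteq J^{\cl_\pi}$ from \autoref{contain} and the definition of $\alt$ as a meet, then close the loop by proving $J^{\cl_\pi}\subseteq J^{\Kos}$ for every regular alteration $\pi$, and both arguments hinge on the Grothendieck/local duality identifying the natural map $H^d_\m(R)\to H^d_\m(\myR\Gamma(\cO_Y))$ with the Matlis dual of the trace $\rho\colon\Gamma(\omega_Y)\to\omega_R$. The difference is in how the last containment is packaged. The paper quotes $0^{\cl_\pi}_{H^d_\m(R)}=\ker\big(H^d_\m(R)\to H^d_\m(\myR\Gamma(\cO_Y))\big)$ from the proof of \autoref{thm.JOmegaCanonicalAltDual} and then finishes by a formal functoriality/residuality argument for module closures, pushing the containment from $H^d_\m(R)$ back to $R/J$ and hence $R$ (and identifying $J^{\Kos}$ via \autoref{prop.KHClosureOfFullParameterIdeal}, which is the reference you correctly used). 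You instead unpack the duality by hand: rewrite $J^{\Kos}=\{x\in R : x\Gamma(\omega_Y)\subseteq\rho^{-1}(\ker\epsilon)\}$ where $\epsilon\colon\omega_R\to E$ is the functional dual to $1\mapsto[1/(x_1\cdots x_d)]$, observe $J\omega_R\subseteq\ker\epsilon$, and apply the explicit colon formula $J^{\cl_\pi}=J\Gamma(\omega_Y):_R\Gamma(\omega_Y)$ from \autoref{eq.ModuleClosureForIdeals}. The explicit version has the small advantage of making visible why the containment is an equality on the nose for parameter ideals (rather than invoking abstract properties of module closures), at the cost of carrying the duality bookkeeping in-line; you correctly note that passing to $\widehat R$ (where both sides base change compatibly, and $Y\times_R\widehat R\to\Spec\widehat R$ remains a regular alteration) makes Matlis duality perfect, and your treatment of the non-normal case via the Stein factorization matches the conventions implicit in \autoref{thm.JOmegaCanonicalAltDual}. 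Both routes are sound; yours is a legitimate, more hands-on rendering of the same duality computation.
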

\begin{proof}
    The containments $J^{\Kos}  \subseteq J^{\Hir} \subseteq  J^{\alt}\subseteq  J^{\cl_\pi}$ are found in \autoref{contain} or follow by definition, and so it suffices to show that $J^{\cl_{\pi}} \subseteq J^{\Kos}$.  From the statement and proof of \autoref{thm.JOmegaCanonicalAltDual}, we have that 
    \[
        H^d_{\m}(R)/0^{\cl_{\pi}}_{H^d_{\m}(R)} = \mJ(\omega_R)^{\vee} = \Image( H^d_{\m}(R) \to \prod_y H^d_{\m}(\Gamma(\omega_Y))
    \] 
    where $\pi : Y \to \Spec R$ is a regular alteration and $y$ runs over elements of $\Gamma(\omega_Y)$.  However, we also have that 
    \[ 
        \mJ(\omega_R)^{\vee} = \Big(\Image\big( \Gamma(\omega_Y) \to \omega_R \big)\Big)^{\vee} = \Image\big( H^d_{\m}(R) \to H^d_{\m}(\myR \Gamma(\cO_Y)) \big).
    \]
    Thus $0^{\cl_{\pi}}_{H^d_{\m}(R)} = \ker\big( H^d_{\m}(R) \to H^d_{\m}(\myR \Gamma(\cO_Y)) \big)$.
    
    Now, consider the map $R \to R/J \xrightarrow{1 \mapsto [1/(x_1 \cdots x_d)]} H^d_{\m}(R)$.  As module closures are residual and functorial, we know we have a map
    \[ 
        J^{\cl_\pi}/J = 0^{\cl_{\pi}}_{R/J} \to 0^{\cl_\pi}_{H^d_{\m}(R)} = \ker\big( H^d_{\m}(R) \to H^d_{\m}(\myR \Gamma(\cO_Y)) \big).
    \]
    But by \autoref{cor.RatSingsVsKoszulClosedParameterIdeal}, we have that the kernel of $R \to H^d_{\m}(\myR \Gamma(\cO_Y))$ is $J^{\Kos}$ so that $J^{\cl_{\pi}} \subseteq J^{\Kos}$ as desired.
\end{proof}
}

{%
In fact, this will also imply the result for partial parameter ideals.  

\begin{corollary}
\label{cor.ClosuresCoincideInChar0ForParameterIdeals}
    Suppose $(R, \m)$ is local and satisfies \autoref{set.NiceSetup}, $x_1, \dots, x_d$ is a full system of parameters and $J = (x_1, \dots, x_i)$ for some $1 \leq i \leq d$.  Then $J^{\Kos} = J^{\Hir} = J^{\alt} = J^{\cl_\pi}$ for $\pi$ any regular alteration.
\end{corollary}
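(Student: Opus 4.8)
The plan is to reduce the partial-parameter-ideal case to the full-parameter-ideal case already settled in \autoref{cor.ClosuresAreAllEqualForFullParameterIdeals}. First one may assume $R$ is a normal domain, so that $\cl_\pi$ and $\alt$ are defined; then, by \autoref{contain} together with the description of $\alt$ as the meet of the operations $\cl_\pi$, we already have the chain of containments
\[
    J^{\Kos} \subseteq J^{\Hir} \subseteq J^{\alt} \subseteq J^{\cl_\pi}
\]
for any regular alteration $\pi : Y \to \Spec R$. Hence it suffices to prove the single containment $J^{\cl_\pi} \subseteq J^{\Kos}$ for one fixed $\pi$.

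The idea is then to approximate $J$ by full parameter ideals. For each integer $m \geq 1$ I would set $J_m := J + (x_{i+1}^m, \dots, x_d^m) = (x_1, \dots, x_i, x_{i+1}^m, \dots, x_d^m)$, which is again generated by a full system of parameters of $R$. Two approximation identities then need to be established. The first, $J^{\Kos} = \bigcap_{m \geq 1} J_m^{\Kos}$, follows from order-preservation for the containment $\subseteq$, and from $J_m \subseteq J + \m^m$ together with \autoref{prop.IntersectionContainments} for the reverse, since $\bigcap_m J_m^{\Kos} \subseteq \bigcap_m (J + \m^m)^{\Kos} = J^{\Kos}$. The second, $J^{\cl_\pi} = \bigcap_{m \geq 1} J_m^{\cl_\pi}$, uses the formula $J^{\cl_\pi} = J\Gamma(\omega_Y) :_R \Gamma(\omega_Y)$ for module closures of ideals (see \autoref{def.ModuleClosure}): the containment $\subseteq$ is again order-preservation, while for the reverse an element $r$ lying in every $J_m\Gamma(\omega_Y) :_R \Gamma(\omega_Y)$ satisfies $r\,\Gamma(\omega_Y) \subseteq \bigcap_m \bigl(J\Gamma(\omega_Y) + \m^m \Gamma(\omega_Y)\bigr) = J\Gamma(\omega_Y)$, where the last equality is the Krull intersection theorem applied to the finitely generated $R$-module $\Gamma(\omega_Y)/J\Gamma(\omega_Y)$ over the Noetherian local ring $(R,\m)$ (note that $\Gamma(\omega_Y)$ is finitely generated because $\pi$ is proper).

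Putting these together finishes the argument: each $J_m$ is a full parameter ideal, so $J_m^{\cl_\pi} = J_m^{\Kos}$ by \autoref{cor.ClosuresAreAllEqualForFullParameterIdeals}, and therefore
\[
    J^{\cl_\pi} = \bigcap_{m \geq 1} J_m^{\cl_\pi} = \bigcap_{m \geq 1} J_m^{\Kos} = J^{\Kos};
\]
combined with the chain of containments above this yields $J^{\Kos} = J^{\Hir} = J^{\alt} = J^{\cl_\pi}$. I do not anticipate a serious obstacle, since the substantive input, the full-parameter case, is already available; the only thing to be careful about is that the device of passing to larger parameter ideals and intersecting is legitimate for \emph{both} $\Kos$ (where it is precisely \autoref{prop.IntersectionContainments}) and for the module closure $\cl_\pi$ (where it is Krull's intersection theorem, using finite generation of $\Gamma(\omega_Y)$), and that the reduction to the normal domain case at the start is harmless.
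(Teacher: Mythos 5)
Your proof is correct and follows essentially the same route as the paper's: both reduce to showing $J^{\cl_\pi} \subseteq J^{\KH}$, approximate $J$ by the full parameter ideals $J_m = (x_1,\dots,x_i,x_{i+1}^m,\dots,x_d^m)$, invoke \autoref{cor.ClosuresAreAllEqualForFullParameterIdeals} for those, and then close the loop with \autoref{prop.IntersectionContainments}. The one cosmetic difference is that you also prove the reverse containment $\bigcap_m J_m^{\cl_\pi} \subseteq J^{\cl_\pi}$ via the Krull intersection theorem; the paper does not need this, since order-preservation already gives $J^{\cl_\pi} \subseteq \bigcap_m J_m^{\cl_\pi} = \bigcap_m J_m^{\KH} = J^{\KH}$, and $J^{\KH}\subseteq J^{\cl_\pi}$ is known from the chain of containments. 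Your extra step is correct but superfluous.
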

\begin{proof}
    We have the containments $\subseteq$ so it suffices to show that $J^{\cl_\pi} \subseteq J^{\Kos}$.
    We know 
    \[
        J^{\cl_\pi} \subseteq \bigcap_{n>0}(x_1, \dots, x_i, x_{i+1}^n, \dots, x_{d}^n)^{\cl_{\pi}} = \bigcap_{n>0}(x_1, \dots, x_i, x_{i+1}^n, \dots, x_{d}^n)^{\Kos} = J^{\Kos}
    \]
    where we have used \autoref{cor.ClosuresAreAllEqualForFullParameterIdeals} for the second equality and \autoref{prop.IntersectionContainments} for the last equality.  The result follows.
\end{proof}

As $J^{\cl_{\pi}}$ and $J^{\KH}$ commute with localization, this result generalizes outside the local case to any ideal $J = (f_1, \dots, f_i)$ such that $f_1, \dots, f_i$ is part of a system of parameters in any localization $R_Q$ with $J \subseteq Q \in \Spec R$.
}

\section{Connections to positive characteristic}

Suppose $R$ is as in \autoref{set.NiceSetup}.  As one varies over all regular alterations $\pi : Y \to \Spec R$, one obtains the multiplier modules / Grauert-Riemenschneider modules $\Gamma(\omega_Y)$ for all finite integral extensions $R \subseteq S$.  These modules reduce modulo-$p$ to parameter test modules $\tau(\omega_S)$ of \cite{SmithTestIdealsInLocalRings} as we observed in the proof of \autoref{cor.KHTestIdealReducesToParameterTestIdeal}.  

It is thus natural to ask what happens if we construct a characteristic $p > 0$ closure operation using the modules $\tau(\omega_S)$ as one varies over all finite domain extensions $R \subseteq S$ (in analogy to the canonical alteration closure of \autoref{def.CanonicalAlterationClosure}).  In fact, below in \autoref{thm.TightClosureEqualsModuleClosureParameterTestModules}, we prove that this closure coincides with tight closure under mild hypotheses.

We briefly recall the notion of tight interior from \cite{tightinterior}.
\begin{definition}
    Suppose $R$ is a ring of characteristic $p > 0$ and $M$ is an $R$-module.  We define the tight interior $M_*$ of $M$ to be 
    \[
        \bigcap_{c \in R^{\circ}} \bigcap_{e_0 \geq 0} \sum_{e \geq e_0} \Image\Big( \Hom_R(F^e_* R, M) \xrightarrow{g \mapsto g(F^e_* c)} M\Big).
    \]    
\end{definition}
If $R$ is $F$-finite, reduced and one chooses $c \in R^{\circ}$ a big test element, then in fact we have 
    \[
        M_* = \sum_{e \geq e_0} \Image\Big( \Hom_R(F^e_* R, M) \xrightarrow{g \mapsto g(F^e_* c)} M\Big)
    \]
by \cite[Theorem 2.5]{tightinterior}.

We introduce one other useful definition.
\begin{defn}
    An $R$-module $W$ is called a \emph{durable test module} if for all $R$-module inclusions $L \subseteq M$, we have $\img (1_W \otimes_R (L \into M)) = \img(1_W \otimes_R (L^*_M \into M))$.  Here $1_W$ denotes the identity map on $W$ and so the identification of images just means that $\im(W \otimes L \to W \otimes M) = \im(W \otimes L_M^* \to W \otimes M)$ under the canonical maps.
\end{defn}

\begin{rem}
When $R$ is a prime characteristic Noetherian ring, Huneke \cite{Hu-strongtest} defined a \emph{strong test ideal} to be an ideal $T$ such that for all ideals $I$ of $R$, we have \begin{equation}\label{eq:strongtest}
    I^*T=IT.
\end{equation}  Clearly any such $T$ is contained in the finitistic test ideal of $R$, and Huneke showed that one can often find such an ideal that is not in any minimal prime of $R$.  Enescu \cite{En-strongtest} defined the more general notion of a \emph{strong test module} to be an $R$-module $T$ that satisfies \eqref{eq:strongtest}, and showed that the parameter test submodule is frequently a strong test module.

Note that the notion of durable test modules is a generalization of Enescu's strong test modules, since for any $R$-module $W$ and ideal $I$, we have $\img(1_W \otimes (I \hookrightarrow R)) = IW$.
\end{rem}

\begin{prop}\label{pr:tintdurable}
Let $R$ be an F-finite ring. Let $U, L, M$ be $R$-modules.  Let $j: L \into M$ and $j': L^*_M \into M$ be inclusion maps.  Then $\img(1_{U_*} \otimes j) = \img(1_{U_*} \otimes j')$.

That is, the tight interior of any $R$-module is a durable test module for $R$.
\end{prop}

\begin{proof}
Let $u \in U_*$ and $z \in L^*_M$.  It is enough to show that $u \otimes z$ is in the image of $1_{U_*} \otimes j$.  Let $c\in R^\circ$ and $e_0 \in \N$ such that $cz^q \in L^{[q]}_M$ for all $e \geq e_0$, where $q=p^e$.  Then by definition of $U_*$, there exist $e_1, \ldots, e_n \in [e_0, \infty) \cap \N$ and $g_1, \ldots, g_n$ such that for each $1\leq i \leq n$, $g_i \in \Hom_R(F^{e_i}_*(R), U)$ and $\sum_{i=1}^n g_i(F^{e_i}_*(c)) =u$.  Then it is enough to show that for each $i$, $g_i(F^{e_i}_*(c)) \otimes z \in \img(1_{U_*} \otimes j)$, so from now on we simplify notation by fixing $i$ and setting $g:=g_i$ and $e:=e_i$.

Consider the following commutative diagram: \[\xymatrix{
F^e_*(R) \otimes_R L \ar[d]^\alpha \ar[r]^{f_1}& F^e_*(R) \otimes_R L^*_M \ar[d]^\beta \ar[r]^{f_2}& F^e_*(R) \otimes_R M \ar[d]^\gamma\\
U_* \otimes_R L \ar[r]^{h_1}&U_* \otimes_R L^*_M \ar[r]^{h_2}&U_* \otimes_R M.
}
\]
We define the maps above by setting $\alpha := g \otimes_R 1_L$, $\beta := g \otimes_R 1_{L^*_M}$, $\gamma := g \otimes_R 1_M$, and each of the horizontal maps is the tensor product of an identity map with an inclusion map.

Then $y := g(F^e_*(c)) \otimes z$ (in $U_* \otimes L^*_M$) $= \beta(t=F^e_*(c) \otimes z)$ (with $t \in F^e_*(R) \otimes L^*_M$), but $f_2(t) = F^e_*(c) \otimes z$ (in $F^e_*(R) \otimes M$) $\in \img(1_{F^e_*(R)} \otimes j) = \img(f_2 \circ f_1)$.  That is, there are some $d_j \in R$ and $\ell_j \in L$, $1 \leq j \leq m$, such that if we set $a:= \sum_{j=1}^m F^e_*(d_j) \otimes \ell_j \in F^e_*(R) \otimes_R L$, we have $f_2(t) = F^e_*(c) \otimes z = \sum_{j=1}^m F^e_*(d_j) \otimes \ell_j =  f_2(f_1(a))$.  Thus, in $U_* \otimes M$, we have \begin{align*}
  g(F^e_*(c)) \otimes z &= h_2(y) = h_2(\beta(t)) = \gamma(f_2(t)) = \gamma(f_2(f_1(a)))\\
  &= h_2(h_1(\alpha(a))) \in \img (h_2 \circ h_1) = \img (1_{U_*} \otimes j),
\end{align*}
as was to be shown.
\end{proof}

We recover the following corollary -- a variant of Enescu's result.

\begin{cor}[{\cf \cite[Corollary 2.6]{En-strongtest}}]\label{cor:taudurable}
    For any F-finite reduced ring $R$, the big test ideal is a durable test module.  If $R$ is additionally locally equidimensional, then the parameter test submodule is also a durable test module.
\end{cor}
\begin{proof}
    The big test ideal is the tight interior of $R$ by \cite[Proposition 2.3]{tightinterior}.  For the parameter test submodule, see  \autoref{lem.InteriorOfOmegaIsTauOmega} below.
\end{proof}

 We assume it is well known to experts that the parameter test submodule $\tau(\omega_R) \subseteq \omega_R$ is the tight interior of $\omega_R$, but we do not know of a reference so we provide a short proof.

\begin{lemma}
\label{lem.InteriorOfOmegaIsTauOmega}
        Suppose $R$ is an $F$-finite reduced and locally equidimensional Noetherian ring, then $\tau(\omega_R) = (\omega_R)_*$ as submodules of $\omega_R$.
    \end{lemma}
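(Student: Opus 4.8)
The plan is to realize the parameter test submodule $\tau(\omega_R)$ as the tight interior of $\omega_R$ by exploiting the fact that, on a reduced equidimensional $F$-finite ring, $\omega_R$ is the lowest cohomology of the normalized dualizing complex and $\tau(\omega_R)$ is already known to be generated by a "big sharp test element" construction via the Frobenius trace maps $\Hom_R(F^e_* R, \omega_R) \cong F^e_* \omega_R$ (Grothendieck duality for the finite map $F^e$). First I would reduce to the local case: both $\tau(\omega_R)$ and $(\omega_R)_*$ are defined by intersecting/summing over local data, and both are compatible with localization (the tight interior because a big test element stays a big test element after localization, and $\tau(\omega_R)$ by \cite{SchwedeTuckerTestIdealsOfNonprincipalIdeals}), so it suffices to prove $\tau(\omega_{R_\m}) = (\omega_{R_\m})_*$ for each maximal ideal. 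Since $R$ is equidimensional of some dimension $d$ locally, we have $\omega_{R_\m}$ with $H^d_\m(R_\m)^\vee \cong \omega_{R_\m}$, and the duality $(0^*_{H^d_\m(R)})^\vee \cong \omega_R/\tau(\omega_R)$ is the defining property of the parameter test module (Matlis dual of the tight closure of $0$ in the top local cohomology).

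The key computation is to match the two descriptions. On the one hand, since $R$ is $F$-finite and reduced, fix a big test element $c \in R^\circ$; by \cite[Theorem 2.5]{tightinterior} we have $(\omega_R)_* = \sum_{e \geq e_0} \Image\big(\Hom_R(F^e_* R, \omega_R) \xrightarrow{g \mapsto g(F^e_* c)} \omega_R\big)$ for any $e_0$. Under the Grothendieck-duality identification $\Hom_R(F^e_* R, \omega_R) \cong F^e_* \omega_R$ (using that $\omega_R$ represents the appropriate functor and $F^e$ is finite), the evaluation-at-$F^e_* c$ map becomes the composite $F^e_* \omega_R \xrightarrow{F^e_*(c \cdot -)} F^e_* \omega_R \xrightarrow{\mathrm{Tr}^e} \omega_R$ of multiplication by $c$ followed by the $e$-th iterated trace (Cartier) map $\mathrm{Tr}^e \colon F^e_* \omega_R \to \omega_R$. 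Thus $(\omega_R)_* = \sum_{e \gg 0} \mathrm{Tr}^e(F^e_*(c\,\omega_R))$, which is precisely one of the standard descriptions of $\tau(\omega_R)$ (see \cite{SmithTestIdealsInLocalRings, SchwedeTuckerTestIdealsOfNonprincipalIdeals}, and compare \cite[Chapter 6, Theorem 3.7]{SchwedeSmith.FBook}). Alternatively, and perhaps cleaner, I would Matlis-dualize: the tight interior of $\omega_R$ is dual to $0^*$ in $\omega_R^\vee \cong H^d_\m(R)$ — this is the general principle that tight interior and tight closure are Matlis dual for the dualizing module versus top local cohomology — and $0^*_{H^d_\m(R)}$ is dual to $\omega_R/\tau(\omega_R)$ by definition of the parameter test submodule, giving $(\omega_R)_* = \tau(\omega_R)$ directly.

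The main obstacle is making the Matlis-duality bookkeeping precise: one must check that "$M_*$" as defined by the intersection over $c \in R^\circ$ and $e_0 \geq 0$ really does Matlis-dualize (after completion) to $0^*$ in the Matlis dual, which requires knowing that tight closure of $0$ in an Artinian module and tight interior of its (finitely generated) dual are exchanged — this uses $F$-finiteness to identify $\Hom_R(F^e_* R, M)^\vee \cong F^e_* (M^\vee)$ and the compatibility of these isomorphisms with the evaluation maps, together with the fact that $c$-multiplication is self-dual. Equidimensionality enters exactly where Grothendieck/local duality is invoked so that $\omega_R$ genuinely sits as $H^d_\m(R)^\vee$ with the correct single cohomological degree. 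I would organize the proof to first establish the duality statement "$(M^\vee)_* \cong (0^*_M)^\vee$ for $M$ Artinian over a complete $F$-finite reduced ring" as a lemma (citing \cite{tightinterior} for the analogous statements already proved there), then apply it with $M = H^d_\m(R)$, $M^\vee = \omega_R$, and use faithfully flat descent along $R \to \widehat R$ to conclude over $R$ itself.
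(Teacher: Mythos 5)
Your proposal is correct, and it in fact contains two distinct arguments.  Your second (Matlis-duality) route is essentially the paper's proof: both reduce to the complete local case using that $\tau(\omega_R)$ and $(\omega_R)_*$ commute with localization and completion, and then observe that each submodule Matlis dualizes to $H^d_{\fm}(R)/0^*_{H^d_{\fm}(R)}$ --- the paper citing \cite[Proposition 3.5]{tightinterior} for the tight-interior half, which handles exactly the ``bookkeeping'' you flag as the main obstacle, so there is no need to re-derive the duality between interior of $M^\vee$ and closure in $M$.  Your first route is genuinely different: using the Grothendieck duality identification $\Hom_R(F^e_* R,\omega_R)\cong F^e_*\omega_R$ to rewrite $(\omega_R)_*$ as $\sum_{e\gg 0}\mathrm{Tr}^e\big(F^e_*(c\,\omega_R)\big)$ for a big test element $c$, and then recognizing this as the standard description of $\tau(\omega_R)$, yields a more hands-on proof at the level of elements rather than after dualizing.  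That version is slightly longer to make airtight (one must pin down the normalization of the trace isomorphism and verify that evaluation at $F^e_*c$ indeed corresponds to $\mathrm{Tr}^e\circ F^e_*(c\cdot-)$), whereas the paper's route is short precisely because it delegates this to \cite{tightinterior}; on the other hand your direct route makes the role of equidimensionality and $F$-finiteness more transparent and avoids passing through the Artinian Matlis dual.
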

    \begin{proof}[Proof of claim]
        By \cite[Chapter 2, Corollary 5.8]{SchwedeSmith.FBook} and \cite[Corollary 2.11]{tightinterior}, the formation of $\tau(\omega_R)$ and $(\omega_R)_*$ commute with localization and completion.  Hence we may assume that $R$ is complete and local (since two submodules $L_1, L_2 \subseteq \omega_R$ coincide if and only if they coincide after completion at each maximal ideal).  But now, both submodules Matlis dualize to $H^d_{\fm}(R)/0^*_{H^d_{\fm}(R)}$, where $d=\dim R$; see \cite[Proposition 3.5]{tightinterior} for $(\omega_R)_*$. 
        The lemma follows.
    \end{proof}

\begin{thm}
    \label{thm.TightClosureEqualsModuleClosureParameterTestModules}
    Let $R$ be an F-finite domain.  Then for any $R$-submodule inclusion $L \into M$, we have \[
    L^*_M = \bigcap_S L^{\cl_{\tau(\omega_S)}}_M =: L_M^{\cl},
    \]
    where the intersection is taken over all module-finite domain extensions $R \ra S$.
\end{thm}

\begin{proof}
    First let $z\in L^\cl_M$.  Let $T$ be an ideal of $R$ such that $\tau(\omega_R) \cong T$ as $R$-modules.  
    Then for each $q=p^e$, there is an $R^{1/q}$-module isomorphism $\phi: T^{1/q} \ra \tau(\omega_{R^{1/q}})$.  Choose a nonzero $c\in T$.  Then since $R \into R^{1/q}$ is module-finite by assumption, letting $a=\phi(c^{1/q})$ we have \begin{align*}
    a \otimes z &\in \img(\tau(\omega_{R^{1/q}}) \otimes_R L \ra \tau(\omega_{R^{1/q}}) \otimes_R M) 
    \end{align*}
    by definition of $\cl$.  Then applying the maps $\phi^{-1} \otimes 1$, we have $c^{1/q} \otimes z \in \img(T^{1/q} \otimes_R L \ra T^{1/q} \otimes_R M)$, so that using the injection $T^{1/q} \into R^{1/q}$, we also get $c^{1/q} \otimes z \in \img(R^{1/q} \otimes_R L \ra R^{1/q} \otimes_R M)$. That is, $cz^q \in L^{[q]}_M$, so by definition $z\in L^*_M$. 

    Conversely let $z \in L^*_M$.  Let $R \into S$ be a module-finite domain extension and let $a \in \tau(\omega_S) =: U$.  Then in the module $U \otimes_S (S \otimes_R M)$, we have the following, where the first equality is because $U$ is a durable test module for $S$ (see \autoref{cor:taudurable}).  We are using the convention that if $C$ is a submodule of $M$, then $SC = \img(S \otimes_R C \ra S \otimes_R M)$ induced by the inclusion map $C \into M$: \begin{align*}
        a \otimes (1 \otimes z) &\in \img (U \otimes_S (SL)^*_{S \otimes M} \ra U \otimes_S (S \otimes_R M)) \\
        &= \img(U \otimes_S (SL) \ra U \otimes_S (S \otimes_R M)) \\
        &= \img (U \otimes_S (S \otimes_R L) \ra U \otimes_S (S \otimes_R M)).
    \end{align*}
    By isomorphism of the functors $U \otimes_S (S \otimes_R -)$ and $U \otimes_R -$ on ${}_R$Mod, it follows that \[
    a \otimes z \in \img(U \otimes_R L \ra U \otimes_R M).
    \]
    Thus, $z \in L^\cl_M$.
\end{proof}

\begin{remark}
    Let $R$ be an F-finite domain. Under geometric hypotheses, for instance if $R$ is essentially of finite type over a perfect field, for every finite extension $R \subseteq S$, we know by \cite{deJongAlterations,BlickleSchwedeTuckerTestAlterations}, that there exists a regular alteration ${Y_S} \to \Spec S$ such that $\Gamma(\omega_{Y_S}) \to \omega_S$ has image $\tau(\omega_S)$, that is, there is a surjection $\Gamma(\omega_{Y_S})\twoheadrightarrow  \tau(\omega_S)$.  Hence by \cite[Proposition 2.20]{perezrg} we have that 
    \[
        L_M^* = \bigcap_S L_M^{\cl_{\tau(\omega_S)}} \supseteq \bigcap_S L_M^{\cl_{\Gamma(\omega_{Y_S})}}.
    \]
    In particular, if one intersects over all regular alterations $Y \to \Spec R$, one obtains 
    \[
        L_M^* \supseteq \bigcap_Y L_M^{\cl_{\Gamma(\omega_{Y})}}.
    \]
    Here, the right side is a naive definition of $\alt$-closure in characteristic $p > 0$.  Notice, however, that in characteristic $p > 0$ we need not have $\Gamma(\omega_{Y}) = \myR \Gamma(\omega_{Y})$, see for instance \cite[Example 3.11]{HaconKovacsGenericVanishingFailsForSingularVarieties}.  
\end{remark}

\subsection{Applications to canonical alteration closure}

We can now compare canonical alteration closure with tight closure in equal characteristic zero obtained via reduction to characteristic $p > 0$.

\begin{proposition}
    \label{prop.TCIsContainedInCaltChar0}
    Suppose $k$ is a field of characteristic zero and $R$ is a domain of finite type over $k$, and set $L \subseteq M$ finitely generated $R$-modules.  
    Then $L_M^{*} \subseteq L_M^{\alt}$ where $L_M^*$ denotes reduction modulo $p$ tight closure \cite{HochsterHunekeTightClosureInEqualCharactersticZero}.  {In fact, after reduction modulo any $p \gg 0$, we have that $(L_p)_{M_p}^* \subseteq (L_M^{\cl_{\pi}})_p$ where the $(-)_p$ denotes reduction modulo $p$. }
\end{proposition}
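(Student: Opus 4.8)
The plan is to prove the sharper ``in fact'' statement for a single fixed regular alteration $\pi : Y \to \Spec R$, and then recover the char-zero containment formally. Indeed, if $m \in L_M^{*}$ then for $p \gg 0$ we have $m_p \in (L_p)^{*}_{M_p} \subseteq (L_M^{\cl_{\pi}})_p$ by the sharper statement; since $L_M^{\cl_{\pi}} \subseteq M$ is a \emph{fixed} finitely generated submodule and $M/L_M^{\cl_{\pi}}$ is finitely generated, membership $m \in L_M^{\cl_{\pi}}$ is detected after reduction modulo $p \gg 0$ (the image of $m$ in $M/L_M^{\cl_{\pi}}$ stays nonzero over a dense open of $\mathrm{m}\text{-}\Spec A$ whenever it is nonzero). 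Running over all $\pi$ yields $m \in \bigcap_{\pi} L_M^{\cl_{\pi}} = L_M^{\alt}$.

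So fix $\pi$. Pass to the Stein factorization $Y \to \Spec S \to \Spec R$, so $R \subseteq S$ is module-finite with $S$ a normal domain and $Y \to \Spec S$ a resolution of singularities; then $B := \Gamma(Y, \omega_Y) = \mJ(\omega_S)$ and $\cl_{\pi} = \cl_B$ as a module closure over $R$ (\autoref{def.ModuleClosure}, \autoref{def.CanonicalAlterationClosure}). Spread everything out over a finitely generated $\Z$-subalgebra $A \subseteq k$: the data $R_A \subseteq S_A$, the inclusion $L_A \subseteq M_A$, the module $B_A$ together with a surjection $R_A^{\,n} \onto B_A$, the proper birational map $Y_A \to \Spec S_A$ with $Y_A$ regular over $A$, and $\omega_{Y_A}$. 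After shrinking to a dense open $U \subseteq \mathrm{m}\text{-}\Spec A$, for each $\mathfrak{t} \in U$: $R_{\mathfrak{t}} \subseteq S_{\mathfrak{t}}$ is a module-finite extension of $F$-finite domains (each finite type over the perfect field $A/\mathfrak{t}$); the chosen generators reduce to generators of $B_{\mathfrak{t}}$; and $B_{\mathfrak{t}} \cong \Gamma(Y_{\mathfrak{t}}, \omega_{Y_{\mathfrak{t}}})$ by relative Grauert--Riemenschneider vanishing (which spreads out) together with cohomology and base change. By \autoref{prop.JOmegaReducesToTauOmega} applied to $S$ (a normal domain, hence reduced and locally equidimensional, essentially of finite type over $k$), shrinking $U$ further gives $B_{\mathfrak{t}} = \mJ(\omega_S)_{\mathfrak{t}} = \tau(\omega_{S_{\mathfrak{t}}})$ inside $\omega_{S_{\mathfrak{t}}}$.

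Next, the module closure $\cl_B$ commutes with this reduction. Writing $b_1, \dots, b_n$ for the generators of $B$, \autoref{def.ModuleClosure} gives $L_M^{\cl_B} = \ker\bigl(M \to \bigoplus_{i=1}^{n} (M/L)\otimes_R B\bigr)$, the kernel of a fixed map of finitely generated $R$-modules; exactness of this one finite diagram is preserved after $-\otimes_A A/\mathfrak{t}$ over a dense open, so $(L_M^{\cl_{\pi}})_{\mathfrak{t}} = (L_{\mathfrak{t}})_{M_{\mathfrak{t}}}^{\cl_{B_{\mathfrak{t}}}} = (L_{\mathfrak{t}})_{M_{\mathfrak{t}}}^{\cl_{\tau(\omega_{S_{\mathfrak{t}}})}}$. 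Finally apply \autoref{thm.TightClosureEqualsModuleClosureParameterTestModules} to the $F$-finite domain $R_{\mathfrak{t}}$: since $(L_{\mathfrak{t}})^{*}_{M_{\mathfrak{t}}} = \bigcap_T (L_{\mathfrak{t}})_{M_{\mathfrak{t}}}^{\cl_{\tau(\omega_T)}}$ over all module-finite domain extensions $T$, in particular $(L_{\mathfrak{t}})^{*}_{M_{\mathfrak{t}}} \subseteq (L_{\mathfrak{t}})_{M_{\mathfrak{t}}}^{\cl_{\tau(\omega_{S_{\mathfrak{t}}})}} = (L_M^{\cl_{\pi}})_{\mathfrak{t}}$, which is the asserted inclusion.

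The main obstacle is the compatibility of the several reductions modulo $p$: one must be certain that $\mJ(\omega_S)$, defined via a char-zero resolution of $\Spec S$, genuinely reduces to the parameter test module $\tau(\omega_{S_p})$ — this is precisely \autoref{prop.JOmegaReducesToTauOmega} applied to $S$ rather than $R$, so only the verification that $S$ meets its hypotheses is needed — and that the formation of the module closure $\cl_B$ commutes with reduction, which amounts to preserving exactness of a single finite diagram; everything else is routine spreading-out bookkeeping.
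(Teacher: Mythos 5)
Your argument is correct and follows essentially the same route as the paper: prove the sharper reduction-mod-$p$ inclusion by identifying $\Gamma(\omega_Y)$ with $\mJ(\omega_S)$ for $S = \Gamma(\cO_Y)$, invoke \autoref{prop.JOmegaReducesToTauOmega} for $S$ to get $\tau(\omega_{S_p})$ after reduction, and then apply \autoref{thm.TightClosureEqualsModuleClosureParameterTestModules}. You make explicit the two points the paper treats as ``clear'' (that the sharper statement recovers the char-zero containment, and that the finite-presentation of $\cl_B$ as a kernel lets it commute with reduction), which is fine; the extra rederivation of Grauert--Riemenschneider spreading out is redundant given that you also cite \autoref{prop.JOmegaReducesToTauOmega}, but it is not an error.
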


\begin{proof}
    We prove the second statement as it clearly implies the first.  
    Fix $Y \to \Spec R$ an alteration and set $S = \Gamma(\cO_Y)$, a finite extension of $R$.  
    Fix generators $b_1, \dots, b_t \in \Gamma(\omega_Y)$.  Note that if $z \in \ker\big( M \xrightarrow{m \mapsto \bar m \otimes b_i} M/L \otimes_R \Gamma(\omega_Y) \big)$ for all $i = 1, \dots, n$, then for any $b = \sum a_i b_i$ we see that $z \in \ker\big( M \xrightarrow{m \mapsto \bar m \otimes b} M/L \otimes_R \Gamma(\omega_Y) \big)$.  Furthermore, we know that $\Gamma(\omega_Y)$ reduces modulo $p \gg 0$ to $\tau(\omega_{S_p})$ by \autoref{prop.JOmegaReducesToTauOmega}.  It follows that 
    \[  
        (L_p)_{M_p}^{\cl_{\tau(\omega_{S_p})}} = (L_M^{\cl_\pi})_p
    \]
    for $p \gg 0$.  Thanks to \autoref{thm.TightClosureEqualsModuleClosureParameterTestModules}, we know that $(L_p)_{M_p}^* \subseteq (L_p)_{M_p}^{\cl_{\tau(\omega_{S_p})}}$.
    %
    %
    This completes the proof.
\end{proof}

\begin{remark}
\label{rem.ClosureForInfiniteInCalt}
Suppose there exists an infinite set of $p > 0$ such that $x_p \in (L_p)_{M_p}^*$.  The above result then implies we have that $x \in L_M^{\alt}$.  We explain this more precisely using the notation of \autoref{subsec.ReductionModP}: we assume that for a Zariski dense but not necessarily open set $V \subseteq \mathfrak{m}\text{-}\Spec A$ that $x_{\bf t} \subseteq (L_{\bf t})_{M_{\bf t}}^*$ for all ${\bf t} \in V$.  The result above implies that $x \in L_M^{\alt}$.  In particular, thanks to \cite{BrennerKatzman.ArithmeticOfTightClosure}, if $R = \bQ[x,y,z]/(x^7+y^7+z^7)$, we know that $x^3y^3 \in (x^4,y^4,z^4)^{\alt}$ even though $x^3y^3$ is not in the usual  tight closure in equal characteristic zero $(x^4,y^4,z^4)^{*}$.  Thus, $I^\alt$ can contain $I^*$ strictly.
\end{remark}

\autoref{prop.TCIsContainedInCaltChar0} implies that canonical alteration closure satisfies the stronger Brian\c{c}on-Skoda property.  Unfortunately, our proof relies on reduction to characteristic $p \gg 0$.

\begin{corollary}
\label{cor.BSForCanonicalAlteration}
    Suppose $R$ is a domain essentially of finite type over a field of characteristic zero.  Suppose $J \subseteq R$ is a ideal which can be generated by $n$ elements.  Then 
    \[
        \overline{J^{n+k-1}} \subseteq (J^{k})^{\alt}
    \]
    for every integer $k \geq 1$.  
\end{corollary}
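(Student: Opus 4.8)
The plan is to deduce this from the positive-characteristic Brian\c{c}on-Skoda theorem for tight closure by reduction modulo $p \gg 0$, using \autoref{prop.TCIsContainedInCaltChar0} to transport the containment back to characteristic zero. First I would reduce to the case that $R$ is of finite type over the base field $k$: writing $R = W^{-1}R_0$ with $R_0$ a finite type $k$-algebra (and $J = W^{-1}J_0$), integral closure commutes with localization and each $\cl_\pi$, being the module closure associated to a finitely generated module, commutes with localization, hence so does their meet $\alt$; so it suffices to prove $\overline{J_0^{\,n+k-1}} \subseteq (J_0^{\,k})^{\alt}$. (This mirrors the reduction to the finite type case used in \autoref{cor.KHTestIdealReducesToParameterTestIdeal}.)

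So assume $R$ is a finite type domain over $k$ and fix $x \in \overline{J^{\,n+k-1}}$; the goal is $x \in (J^{\,k})^{\alt}$. Choose an equation of integral dependence $x^m + a_1 x^{m-1} + \dots + a_m = 0$ with $a_i \in (J^{\,n+k-1})^i$, and spread out all the data ($R$, $J$, $x$, the $a_i$, and this equation) over a finitely generated $\bZ$-subalgebra $A \subseteq k$ as in \autoref{subsec.ReductionModP}. For every maximal ideal $\mathfrak{t} \in \m\text{-}\Spec A$, reducing the equation of integral dependence modulo $\mathfrak{t}$ exhibits $x_{\mathfrak{t}}$ as integral over $(J_{\mathfrak{t}})^{n+k-1}$; that is, $x_{\mathfrak{t}} \in \overline{(J_{\mathfrak{t}})^{n+k-1}}$ for \emph{every} such $\mathfrak{t}$ (this direction needs no genericity hypothesis, since we are only asserting a membership).

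Next I would apply the Brian\c{c}on-Skoda theorem for tight closure in characteristic $p>0$, \cite[Theorem 5.4]{HHmain}: since $J_{\mathfrak{t}}$ is generated by $n$ elements, $\overline{(J_{\mathfrak{t}})^{n+k-1}} \subseteq \big((J_{\mathfrak{t}})^k\big)^{*}$, so $x_{\mathfrak{t}} \in \big((J_{\mathfrak{t}})^k\big)^{*}$ for every $\mathfrak{t} \in \m\text{-}\Spec A$, in particular for a Zariski dense set of $\mathfrak{t}$. Applying \autoref{prop.TCIsContainedInCaltChar0} together with the descent recorded in \autoref{rem.ClosureForInfiniteInCalt}, with $L = J^{\,k} \subseteq M = R$, this forces $x \in (J^{\,k})^{\alt}$. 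As $x \in \overline{J^{\,n+k-1}}$ was arbitrary, this proves the corollary.

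The only real work is bookkeeping around the spreading-out: one must note that $(J^k)_{\mathfrak{t}} = (J_{\mathfrak{t}})^k$, that the reduction of an equation of integral dependence is again one with coefficients in the appropriate power ideal, and that the ``$p \gg 0$'' in \autoref{prop.TCIsContainedInCaltChar0} can be arranged compatibly with the other reductions. None of this presents a genuine obstacle, because at every stage we only ever need containments of ideals and memberships of elements, which are preserved under reduction modulo $p$ for all $p$ (after fixing $A$) rather than merely for $p \gg 0$; the genuine input is entirely contained in \cite[Theorem 5.4]{HHmain} and \autoref{prop.TCIsContainedInCaltChar0}.
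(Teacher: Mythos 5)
Your argument is correct, and it is built on the same two genuine inputs as the paper's proof, namely the characteristic $p>0$ Brian\c{c}on--Skoda theorem for tight closure and \autoref{prop.TCIsContainedInCaltChar0}; but you route through them differently. The paper's proof is a one-line citation: $\overline{J^{n+k-1}} \subseteq (J^k)^*$ by \cite[(1.3.7) Theorem]{HochsterHunekeFRegularityTestElementsBaseChange}, and then $(J^k)^* \subseteq (J^k)^{\alt}$ by \autoref{prop.TCIsContainedInCaltChar0}. You instead unpack the Hochster--Huneke statement by hand: spread $x \in \overline{J^{n+k-1}}$ and its equation of integral dependence out over a finitely generated $\bZ$-algebra $A$, observe that the containment $x_{\mathfrak t}\in\overline{(J_{\mathfrak t})^{n+k-1}}$ persists for \emph{every} closed fiber (since integral dependence equations reduce), apply the characteristic $p$ Brian\c{c}on--Skoda theorem fiberwise to get $x_{\mathfrak t}\in\bigl((J_{\mathfrak t})^k\bigr)^*$, and then descend via \autoref{rem.ClosureForInfiniteInCalt}. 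What this buys you: you only need the fiberwise containment on a Zariski-dense set of $\mathfrak t$, which is precisely the form \autoref{rem.ClosureForInfiniteInCalt} is designed to exploit, and you avoid depending on the exact formulation of what (1.3.7) proves about the characteristic-zero operation $*$. You also explicitly bridge the gap between ``finite type'' (the hypothesis of \autoref{prop.TCIsContainedInCaltChar0}) and ``essentially of finite type'' (the hypothesis of the corollary) via a localization reduction, a point the paper's one-line proof passes over; do note, however, that the claim ``each $\cl_\pi$ commutes with localization, hence so does their meet $\alt$'' is not automatic for an infinite meet and needs the additional observation that regular alterations of $\Spec W^{-1}R$ are, up to domination, localizations of regular alterations of $\Spec R$ (the paper itself relies on this elsewhere, e.g.\ after \autoref{cor.ClosuresCoincideInChar0ForParameterIdeals}, without spelling it out).
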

\begin{proof}
    This is an immediate consequence of \autoref{prop.TCIsContainedInCaltChar0} (for $L = J \subseteq R = M$) combined with \cite[(1.3.7) Theorem]{HochsterHunekeFRegularityTestElementsBaseChange}.
\end{proof}

{Finally, our results give a concrete description of the tight closure of parameter ideals as we vary $p \gg 0$.  In the Cohen-Macaulay standard graded case such that the ring is $F$-rational outside the irrelevant ideal, a different (although related) precise description of the tight closure of a full parameter ideal for $p \gg 0$ was given in \cite[Proposition 6.2(iii)]{Hara.GeometricInterpretationOfTightClosureAndTestIdeals}.  In the Gorenstein case, another related characterization ($J^* = J : \mJ(R)$) easily follows from \cite[Corollary 4.2(2)]{Huneke.TCParamAndGeometry} in view of the fact that the multiplier ideal $\mJ(R)$ reduces modulo $p \gg 0$ to the test ideal $\tau(R_p)$, thanks to \cite{SmithMultiplierTestIdeals,Hara.GeometricInterpretationOfTightClosureAndTestIdeals}.  In  \cite[Theorem 5.24]{Yamaguchi.CharacterizationOfMultiplierIdealsViaUltra} it is pointed out that the argument of Huneke mentioned above generalizes to the quasi-Gorenstein case, even for $\mathfrak{a}^t$-tight closure.  We have learned that some experts know other characterizations of the behavior of tight closure of parameter ideals modulo $p \gg 0$, but we are not aware of a reference for the behavior of parameter ideals modulo $p$ in the generality we obtain below.

\begin{theorem}
\label{thm.AllOurClosuresAgreeWithTightClosureForParameter}
    Suppose $R$ is a domain of finite type over a field of characteristic zero.  Suppose that $J = (f_1, \dots, f_h)$ is an ideal such that $f_1, \dots, f_h$ is part of a system of parameters at every localization of $R$ at a prime containing $J$.  Then 
    \[
        J^{\Kos} = J^* = J^{\cl_\pi} = (J \Gamma(\omega_Y)) : \Gamma(\omega_Y).
    \]
    where $\pi : Y \to \Spec R$ is a regular alteration and $J^*$ denotes the reduction modulo $p$ version of tight closure.  Furthermore, for all $p \gg 0$, $(J^{\Kos})_p = (J_p)^* = (J^{\cl_\pi})_p$.
\end{theorem}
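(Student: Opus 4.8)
The plan is to chain together the characteristic-zero comparisons already established with the reduction-mod-$p$ comparisons, squeezing all the displayed ideals between tight closure and one of the resolution-based closures. First I would reduce to the local case by localizing at a prime $Q \supseteq J$; by hypothesis $f_1,\dots,f_h$ becomes part of a system of parameters in $R_Q$, and both $\KH$-closure and $\cl_\pi$-closure commute with localization (\autoref{prop.FlatMapsWithRationalSingularitiesFibers} and the remark following \autoref{cor.ClosuresCoincideInChar0ForParameterIdeals}), while tight closure of a fixed ideal commutes with localization after passing to $p \gg 0$ in the finite-type setting. Then \autoref{cor.ClosuresCoincideInChar0ForParameterIdeals} gives $J^{\KH} = J^{\Hir} = J^{\alt} = J^{\cl_\pi}$, and the explicit description $J^{\cl_\pi} = J\Gamma(\omega_Y) :_R \Gamma(\omega_Y)$ is exactly \autoref{eq.ModuleClosureForIdeals} applied to $B = \Gamma(\omega_Y)$. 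So it remains to identify this common ideal with $J^*$ (and track the statement through reduction mod $p$).

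The containment $J^{\KH} \subseteq J^*$ holds after reduction mod $p \gg 0$: by \autoref{prop.KHContainedInPlusModP} we have $(J^{\KH})_p \subseteq (J_p)^+ \subseteq (J_p)^*$ for $p$ in a Zariski-dense open set. For the reverse containment, I would use \autoref{prop.TCIsContainedInCaltChar0}, which says $(J_p)^* \subseteq (J^{\cl_\pi})_p$ for all $p \gg 0$ (taking $L = J \subseteq R = M$). Combining, for $p \gg 0$ we get
\[
    (J^{\KH})_p \subseteq (J_p)^* \subseteq (J^{\cl_\pi})_p = (J^{\KH})_p,
\]
where the last equality is the characteristic-zero identity $J^{\KH} = J^{\cl_\pi}$ reduced mod $p$ (legitimate because, after enlarging $A$, the formation of both closures and the equality between them spread out). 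This forces $(J^{\KH})_p = (J_p)^* = (J^{\cl_\pi})_p$ for all $p \gg 0$, proving the "furthermore" clause; and since a finitely generated ideal over a finite-type $\bQ$-algebra is determined by its reductions mod $p \gg 0$, it also yields the characteristic-zero equalities $J^{\KH} = J^* = J^{\cl_\pi} = J\Gamma(\omega_Y):\Gamma(\omega_Y)$.

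The main obstacle is bookkeeping around the reduction-mod-$p$ process: one must arrange a single finitely generated $\bZ$-algebra $A$ and a single dense open $U \subseteq \mathrm{m}\text{-}\Spec A$ over which \emph{simultaneously} (i) the resolution $Y \to \Spec R$ and the module $\Gamma(\omega_Y)$ spread out with $\Gamma(\omega_Y)_p = \tau(\omega_{S_p})$ via \autoref{prop.JOmegaReducesToTauOmega}, (ii) the equality $J^{\KH} = J^{\cl_\pi}$ from \autoref{cor.ClosuresCoincideInChar0ForParameterIdeals} is preserved, (iii) the inclusion of \autoref{prop.KHContainedInPlusModP} holds, and (iv) the parameter hypothesis on $f_1,\dots,f_h$ persists. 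Each of these is individually routine — containments and equalities of finitely generated modules, exactness of finitely many sequences, and openness of the locus where a fixed tuple is part of a system of parameters — and their common refinement is again dense open, so the argument goes through; but the statement and its proof should flag that all comparisons are being taken over a common such $U$.
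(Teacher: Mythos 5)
Your proposal matches the paper's proof in all essentials: localize and apply \autoref{cor.ClosuresCoincideInChar0ForParameterIdeals} to get $J^{\Kos}=J^{\cl_\pi}$, use \autoref{eq.ModuleClosureForIdeals} for the colon description, and squeeze via $(J^{\Kos})_p\subseteq (J_p)^*\subseteq (J^{\cl_\pi})_p$ using \autoref{prop.KHContainedInPlusModP} and \autoref{prop.TCIsContainedInCaltChar0}. The only superfluous remark is invoking localization behavior of reduction-mod-$p$ tight closure — the paper sidesteps this because the comparison with $J^*$ happens entirely in the squeeze after reduction, not before localizing.
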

\begin{proof}
    The ideal $J$ is called a \emph{parameter ideal} in \cite{HochsterHunekeTightClosureInEqualCharactersticZero}.  As both the formation of $J^{\Kos}$ and $J^{\cl_\pi}$ commute with localization, we see that $J^{\Kos} = J^{\cl_\pi}$ by \autoref{cor.ClosuresCoincideInChar0ForParameterIdeals} as we check the statement after localizing at primes containing $J$.  
    We also obtain that $(J^{\Kos})_p \subseteq (J_p)^*$ for $p \gg 0$ thanks to \autoref{prop.KHContainedInPlusModP} while $(J_p)^* \subseteq (J^{\cl_\pi})_p$ for $p \gg 0$ by \autoref{prop.TCIsContainedInCaltChar0}.  The final equality with the colon is a consequence of \autoref{eq.ModuleClosureForIdeals}.  The result follows.
\end{proof}
In \cite[Corollary 4.1]{schoutenscanonicalbigcmalgebras}, it is shown that $J^*$ also coincides with expansion and contraction to a certain big Cohen-Macaulay algebras for $J$ a parameter ideal.
}

\section{Further questions}

We end the paper with some questions.

\subsection{Some questions on \KosClos}

\begin{question}
    Is there a characteristic $p > 0$ closure that is more closely related to $\KH$-closure?  For instance, consider the smallest closure operation on ideals in characteristic $p > 0$ that contains tight closure\footnote{equivalently plus closure; see  \cite{plusclosure}} of parameter ideals and is persistent.  How does that compare to \KosClos in {equal} characteristic zero?  Does that closure operation commute with localization?  Does it characterize $F$-rational singularities?
\end{question}

One important question is whether or not \KosClos can be extended to a module closure.

\begin{question}
    Does \KosClos extend to a closure { on modules}?   { If so, does that extended closure} satisfy Dietz's generalized colon capturing axiom \cite{dietz} (see also \autoref{subsec.ClosureOperations})?  Does { the} Hironaka {pre}closure satisfy Dietz's generalized colon capturing axiom?
\end{question}

A positive answer to the above question would imply that \KosClos induces a big Cohen-Macaulay module.  One possibility is to use the Buchsbaum-Rim complex \cite{BuchsbaumRimAGeneralizedKoszulComplex1} instead of the Koszul complex.

We could also hope that the characterization of $\tau_{\KH}(R)$ from \autoref{prop.TestIdealForKosClosure} extends beyond the Cohen-Macaulay case.
\begin{question}
    Is the \traceIdealComplex of $\myR \Gamma(\cO_Y)$ equal to $\tau_{\KH}(R)$ in general?
\end{question}

    In \cite{bigcmalgebraaxiom}, the third named author explored a condition on closure operations which guarantees the existence of a big Cohen-Macaulay \emph{algebra} (and which any closure operation induced by a big Cohen-Macaulay algebra satisfies), also see \cite{MurayamaUniformBoundsOnSymbolicPowers} for an alternate approach.  In our current paper, we studied a closure operation on ideals induced by a Cohen-Macaulay complex that is also a cosimplicial algebra / differential graded algebra.

    For any differential graded $R$-algebra $A$, one could form the associated Koszul-type closure on ideals $I = (\underline{f})$ via $\ker\big(R \to H_0(K_{\mydot}(\underline{f}; A))\big)$ (the argument of \autoref{prop.kosIndependence} applies).  We tentatively call this a \emph{Koszul dg algebra closure}, and denote it by $I^{\mathrm{K}_A}$.  { Alternately, one could consider the $A$-preclosure defined by $I^A := \ker\big(R \to H_0(R/I \otimes^{\myL} A)\big)$.}

    It is thus natural to ask the following.

\begin{question}
    What properties of an ideal closure operation are unique to being induced by differential graded algebra as above?  What if it is also a (locally/big) Cohen-Macaulay complex?
\end{question}

    It is worth noting that the properties we proved about \KosClos and Hironaka {\cmg pre}closure are also common to any context where we have a sufficiently (weakly) functorial association from Noetherian rings $R$ to a (hopefully) locally Cohen-Macaulay (over $R$) differential graded algebra.  
    Besides of course the usual weakly functorial associations to (non-derived) big Cohen-Macaulay algebras, we do not know any other general way to produce such Cohen-Macaulay differential graded algebras except for resolution of singularities in {equal} characteristic zero.

In characteristic $p > 0$, Frobenius closure also appears prominently.  It is natural to ask if there is a corresponding operation in {equal} characteristic zero.  We propose the following.

\begin{definition}
    Suppose $R$ satisfies \autoref{set.NiceSetup} and $I = (f_1, \dots, f_n)$ is an ideal of $R$.  We define the \emph{Koszul-Du Bois closure of $I$}, denoted $I^{\mathrm{KD}}$ to be 
    \[
        \ker\Big( R \to H_0\big(K_{\mydot}({\bf f}; R) \otimes^{\myL} \DuBois{R}\big) \Big)
    \]
    where $\DuBois{R} = \myR \Gamma(\Spec R, \DuBois{\Spec R})$ is the $0$th graded piece of the Deligne-Du Bois complex of $R$ (it can be viewed as a cosimplicial algebra), see \cite{DuBoisMain,GNPP,PetersSteenbrinkMixedHodgeStructures,MurayamaInjectivityAndCubicalDescentForSchemesStacksSpaces}.  When $R$ is Cohen-Macaulay and essentially of finite type over a field of characteristic zero, we know that  $\DuBois{\Spec R}$ is a locally Cohen-Macaulay complex by \cite{KovacsSchwedeDBDeforms}.
\end{definition}

Again, using the argument of \autoref{prop.kosIndependence}, it is not difficult to see it is well defined and indeed a closure operation as $\DuBois{R}$ can be viewed as a cosimplicial algebra.  When $R$ is essentially of finite type over a field, we know $\DuBois{R}$ is at least ``close'' to being locally Cohen-Macaulay and in fact it is locally Cohen-Macaulay if $R$ is, by the Matlis dual version of \cite[Theorem 3.3]{KovacsSchwedeDBDeforms}.  We have not studied this operation deeply however.  

We do point out that the existing Macaulay2 package can be used to compute it if $R$ is normal and Cohen-Macaulay.  In that case, if $\pi : Y \to \Spec R$ is a resolution of singularities with SNC exceptional divisor $E$, then $\myR \Hom_R(\DuBois{R}, \omega_R) = \Gamma(\omega_Y(E))$ thanks to \cite[Theorem 3.8]{KovacsSchwedeSmithLCImpliesDuBois} and \cite[Theorem 3.3]{KovacsSchwedeDBDeforms}.  Thus one may call {\tt koszulHironakaClosure(I, M)} where {\tt M} is the module $\omega_Y(E)$ and this will compute ${\tt I}^{KD}$

\subsection{Questions on canonical alteration closure}
{%


Some of the good properties of canonical alteration closure, such as colon capturing, are simply deduced from \KosClos.  It would be natural to try to prove them directly.  Since canonical alteration closure is defined for modules, it is also natural to hope that it satisfies Dietz's generalized colon capturing \cite{dietz}, see \autoref{subsec.ClosureOperations} for a precise statement.

\begin{question}
    Does canonical alteration closure satisfy Dietz's generalized colon capturing axiom?  
\end{question}

Perhaps the most natural question, based on the results of the previous section, is the following.

\begin{question}
    Suppose $R_{\bZ}$ is a Noetherian domain finite type over $\bZ$, $J_{\bZ} \subseteq R_{\bZ}$ is an ideal, 
    and we have base changes $R_{\bQ} = R \otimes_{\bZ} \bQ$ and $R_p = R \otimes_{\bZ} \mathbb{F}_p$, 
    and likewise with $J_{\bQ}$ and $J_p$.  
    Suppose $x \in R$ is in $(J_{\bQ})^{\alt}$.  Does there exist an infinite set of primes $p > 0$ such that $x_p \in (J_p)^*$?  More generally, is the mod $p$ reduction of $(J_{\bQ})^{\alt}$ equal to $(J_p)^*$ for infinitely many $p > 0$?
\end{question}
This question could of course be generalized to finite type domains over various $\bZ$-algebras as in the more general reduction modulo $p > 0$ setup \autoref{subsec.ReductionModP}.  See also \autoref{rem.ClosureForInfiniteInCalt} as well as \cite{BrennerKatzman.ArithmeticOfTightClosure, HochsterHunekeTightClosureInEqualCharactersticZero} for related discussion.


\bibliographystyle{skalpha}
\bibliography{alterationsrefs}
\end{document}